\theoremstyle{thmstyleone}%
\newtheorem{theorem}{Theorem}
\newtheorem{proposition}[theorem]{Proposition}%
\theoremstyle{thmstyletwo}%
\newtheorem{example}{Example}%
\newtheorem{remark}{Remark}%
\newtheorem{corollary}{Corollary}%
\newtheorem{lemma}{Lemma}%
\theoremstyle{thmstylethree}%
\begin{document}

\title[Article Title]{Minimizers of $U$-processes and their domains of attraction}


\author{\fnm{Dietmar} \sur{Ferger}}\email{dietmar.ferger@tu-dresden.de}



\affil{\orgdiv{Fakult\"{a}t Mathematik}, \orgname{Technische Universit\"{a}t Dresden}, \orgaddress{\street{Zellescher Weg 12-14}, \city{Dresden}, \postcode{01069}, \country{Germany}}}




\abstract{In this paper, we study the minimizers of U-processes and their domains of attraction. U-processes arise in various statistical contexts, particularly in M-estimation, where estimators are defined as minimizers of certain objective functions.
Our main results establish necessary and sufficient conditions for the distributional convergence of these minimizers, identifying a broad class of normalizing sequences that go beyond the standard square-root asymptotics with normal limits. We show that the limit distribution belongs to exactly one of the four classes
introduced by Smirnov \cite{Smirnov}. These results do not only extend Smirnov's theory but also generalize existing asymptotic theories for M-estimators, including classical results by Huber \cite{Huber1} and extensions to higher-degree U-statistics.
Furthermore, we analyze the domain of attraction for each class, providing alternative characterizations that determine which types of statistical estimators fall into a given asymptotic regime.}

\keywords{M-estimators, convex U-processes, distributional convergence, domains of attraction.}


\pacs[MSC Classification]{60F05,62G20,62E20.}

\maketitle

\section{Introduction}\label{sec1}
Let $X_1,\ldots,X_n, n \in \mathbb{N},$ be independent random variables defined on a common probability space $(\Omega,\mathcal{A},\mathbb{P})$
with values in some measurable space $(S,\mathcal{S})$ and joint distribution $P=\mathbb{P}\circ X^{-1}_i$. Consider a
map $h:S^l \times \mathbb{R} \rightarrow \mathbb{R}$ such that $h(\textbf{x},\cdot):\mathbb{R} \rightarrow \mathbb{R}$
is convex for every $\textbf{x}=(x_1,\ldots,x_l) \in S^l$ and $h(\cdot,t):S^l \rightarrow \mathbb{R}$ is $\mathcal{S}^l$-Borel measurable and
symmetric for each $t \in \mathbb{R}$. It induces an \emph{U-process}
\begin{equation} \label{Un}
 U_n^0(t):= \binom{n}{l}^{-1} \sum_{1 \le i_1<\ldots<i_l \le n} h(X_{i_1},\ldots,X_{i_l},t), \; t \in \mathbb{R},
\end{equation}
with pertaining mean-function
\begin{equation} \label{U0}
 U^0(t)=\mathbb{E}[h(X_1,\ldots,X_l,t)]=\int_{S^l}h(\textbf{x},t)d P^l(\textbf{x}), \; t \in \mathbb{R},
\end{equation}
provided the integral in (\ref{U0}) exists, i.e.
\begin{equation} \label{U0exists}
 \int_{S^l}|h(\textbf{x},t)|d P^l(\textbf{x})< \infty.
\end{equation}
In statistics  many parameters of interests are defined or can be represented as
minimizing points of $U^0$ and in this case minimizers of $U_n^0$ are reasonable estimators for these. According to Huber \cite{Huber2}, p.44, it is an advantage to replace $U^0$ with the function
\begin{equation} \label{U}
 U(t):=\int_{S^l}h(\textbf{x},t)-h(\textbf{x},t_0) d P^l(\textbf{x}), \; t \in \mathbb{R},
\end{equation}
where $t_0$ is any fixed real number e.g. $t_0=0$. If $U^0$ exists, then so does $U$ and the respective sets of all minimizing points coincide.
As to the existence of $U$ let $D^+h(\textbf{x},t)$ and $D^-h(\textbf{x},t)$ denote the right-and left derivative, respectively, of $h(\textbf{x},\cdot)$ at point $t$.
We will always assume that
\begin{itemize}
\item[(A0)] $\int_{S^l} |D^\pm h(\textbf{x},t)| P^l(d\textbf{x}) < \infty \quad \forall \; t \in \mathbb{R}.$
\end{itemize}
Conclude from Proposition 2.1 in Ferger \cite{Ferger0} (with $\mathfrak{X}=S^l$ and $Q=P^l$ there) that then $U(t)$ exists and is real-valued for every real $t$ and
that the map $U:\mathbb{R} \rightarrow \mathbb{R}$ is convex. Now the above mentioned advantage is that according to Proposition 2.2 in \cite{Ferger0} condition (A0) in fact is strictly weaker than the integrability condition (\ref{U0exists}).

Moreover, by Proposition 2.4 in \cite{Ferger0}
\begin{equation} \label{Dpm}
 D^\pm U(t)=\int_{S^l} D^\pm h(\textbf{x},t) P^l(d\textbf{x})= \mathbb{E}[D^\pm h(\textbf{X},t)]\quad \forall \; t \in \mathbb{R},
\end{equation}
where $\textbf{X}:=(X_1,\ldots,X_l)$. According to Proposition 2.5 in Ferger \cite{Ferger0} the set Argmin$(U)$ of all minimizing points of $U$ is given by
\begin{align} \label{AU}
\text{Argmin}(U) &=\{m \in \mathbb{R}: U(m)= \inf_{t \in \mathbb{R}} U(t)\} \nonumber\\
 &=\{m \in \mathbb{R}: D^-U(m) \le 0 \le D^+ U(m)\} \\
 &= \{m \in \mathbb{R}: \mathbb{E}[D^- h(\textbf{X},m)] \le 0 \le \mathbb{E}[D^+ h(\textbf{X},m)]\}. \nonumber
\end{align}

Introduce the empirical measure
$$
 P_n := \binom{n}{l}^{-1} \sum_{1 \le i_1<\ldots<i_l \le n} \delta_{(X_{i_1},\ldots,X_{i_l})},
$$
where $\delta_\textbf{x}$ denotes the Dirac-measure at point $\textbf{x}=(x_1,\ldots,x_l) \in S^l$.
If the integrating measure $P^l$ in the definition of $U(t)$ is replaced by $P_n$, then one obtains
$$
 U_n(t)= \binom{n}{l}^{-1} \sum_{1 \le i_1<\ldots<i_l \le n} h(X_{i_1},\ldots,X_{i_l},t)-h(X_{i_1},\ldots,X_{i_l},t_0), \; t \in \mathbb{R}.
$$
Obviously, $U_n$ differs from $U_n^0$ only up to a constant, whence both functions possess the same minimizing points.
Assume that $h(\textbf{x},t)\rightarrow \infty$ as $|t|\rightarrow \infty$ for all $\textbf{x} \in S^l$. Then $U_n$ is \emph{coercive}, i.e., $U_n(t) \rightarrow \infty$ as $|t| \rightarrow \infty$. In this case the set Argmin$(U_n)$ of all minimizing points of $U_n$ is
a non-empty compact interval. From Proposition 2.5 of Ferger \cite{Ferger0} it follows that
\begin{equation} \label{AUn}
\emptyset \neq \text{Argmin}(U_n)= \{t \in \mathbb{R}: V_n^-(t)\le0\le V_n^+(t)\},
\end{equation}
where
\begin{equation} \label{Vnpm}
 V_n^\pm(t) = \binom{n}{l}^{-1} \sum_{1 \le i_1<\ldots<i_l \le n} D^\pm h(X_{i_1},\ldots,X_{i_l},t).
\end{equation}
In the important special case $l=1$ the above statistics simplify to $V_n^\pm(t)=n^{-1}\sum_{i=1}^n D^\pm h(X_i,t)$.\\

First, we let $m_n$ be the smallest minimizer of $U_n$ (or equivalently of $U_n^0$). Later on we will drop this assumtion. Since by Proposition 2.1 in Ferger \cite{Ferger0} (with $Q=P_n$) the stochastic process $U_n$ is convex, Corollary 3.3 in Ferger \cite{Ferger1} ensures that $m_n$ is a real random variable.
Our main results give several necessary and sufficient conditions for the distributional convergence of $(m_n-m)/a_n$ with some positive sequence $(a_n)$ converging to zero, which can be determined. It is also shown that the distribution function
of the limit variable can belong to exactly one of four possible classes, which are specified precisely.
These classes coincide with those of Smirnov \cite{Smirnov}.
All this we do under the following assumptions:\\

\begin{itemize}
\item[(A1)] $D^+U(m)=0=D^-U(m)$.

If we put $V(t):=D^+U(t)$, then (A1) can be rewritten as $V(m-)=0=V(m)$, because $D^-U(t)=D^+U(t-)=V(t-)$ by Theorem 24.1 in Rockafellar \cite{Rockafellar}. From (A1) one can deduce with (\ref{AU}) that $m$ is a minimizing point of $U$.
\item[(A2)] $\mathbb{E}[D^+h(\textbf{X},m)^2] < \infty$

If (A2) holds, then by Jensen's inequality and Fubini's theorem $$\zeta := \int_S \mathbb{E}[D^+ h(x,X_2,\ldots,X_l,m)]^2 P(dx) \le \mathbb{E}[D^+h(\textbf{X},m)^2] < \infty,\;\; l\ge 2,$$ is a finite non-negative real number.
If $l=1$, then $\zeta:= \mathbb{E}[D^+h(X_1,m)^2] \in [0,\infty)$.
It is supposed that actually

\item[(A3)] $\zeta$ is positive: $\zeta>0$.

Observe that $K(\textbf{x}):= D^+h(\textbf{x},m)$ is symmetric and measurable. Let $K_1(x):=\mathbb{E}[K(x,X_2,\ldots,X_l]), x \in \mathbb{R},$ be
the \emph{first associated function} of the kernel $K$. It follows from (A1) that $\zeta=\text{Var}[K_1(X_1)]$ and so $\sigma^2 := l^2 \zeta$ is the variance of the first term in the Hoeffding-decomposition of the
$U$-statistic with kernel $K$, confer e.g. Koroljuk and Borovskich \cite{Koroljuk} or Serfling \cite{Serfling}.

\item[(A4)] There exist points $t_0 <0 <t_1$ such that $\mathbb{E}[\big(D^+h(\textbf{X},m+t)-D^+h(\textbf{X},m)\big)^2] < \infty$ for $t \in \{t_0,t_1\}$.
\end{itemize}

In the following theorem $(a_n)$ is a sequence of positive real constants with $a_n \rightarrow 0$. Further, given a map $H:\mathbb{R} \rightarrow \mathbb{R}$ the set of all continuity points of $H$ is denoted by $C_H$.
In case that $H$ is an increasing function, then $\tilde{H}$ defined by $\tilde{H}(x):=H(x+)$ is increasing as well, but in addition is right continuous and has left limits (rcll). Moreover, $\tilde{H}=H$ on $C_H$, confer Chung \cite{Chung}, pp.1-5. We say that a distribution function is \emph{degenerated at zero}, if it corresponds to the Dirac measure at zero.
Finally, let $\Phi_\sigma$ denote the distribution function of a centered normal random variable $N(0,\sigma^2)$ with variance $\sigma^2$.\\

\begin{theorem} \label{thm1} Let $H_n$ denote the distribution function of $m_n$. Assume that (A0)-(A4) hold. Recall that $V=D^+U$ and $\sigma^2 = l^2 \zeta$. Then the following statements (1) and (2) are equivalent:
\begin{itemize}
\item[(1)]
\begin{equation} \label{dconv}
       H_n(a_n x+m) \rightarrow H(x) \quad \forall \; x \in C_H,
\end{equation}
where $H$ is some sub distribution function.
\item[(2)]
\begin{equation} \label{nscond}
\delta_n(x):= \sqrt{n} V(m+ a_n x) \rightarrow \delta(x) \in [-\infty,\infty] \quad \forall \; x \in D,
\end{equation}
where $D$ is some dense subset of $\mathbb{R}$.
\end{itemize}
In both cases, $H(x)= \Phi_\sigma(\delta(x))$ for all $x \in \mathbb{R}$ and $D=C_H$.\\

Moreover, if $H$ is a distribution function,
which is not degenerated at 0, then $H$ belongs to exactly one of the following four disjoint classes.
\begin{align}
&\textbf{class 1}: \; H(x) = \left\{ \begin{array}{l@{\quad,\quad}l}
                 0 & x<0\\ \Phi_\sigma\Big(c x^\alpha\Big) & x > 0
               \end{array} \right. \quad (c, \alpha>0). \label{H1}\\
&\textbf{class 2}: \; H(x) = \left\{ \begin{array}{l@{\quad,\quad}l}
                 \Phi_\sigma(-c |x|^\alpha) & x < 0\\ 1 & x > 0
               \end{array} \right. \quad  (c, \alpha >0).\label{H2}\\
&\textbf{class 3}: \; H(x) = \left\{ \begin{array}{l@{\quad,\quad}l}
                 \Phi_\sigma(-c|x|^\alpha) & x < 0\\  \Phi_\sigma(d x^\alpha) & x > 0
               \end{array} \right. \quad (c, d, \alpha >0).\label{H3}\\
&\textbf{class 4}: \; H(x) = \left\{ \begin{array}{l@{\quad,\quad}l}
                 0 & x<-c_1\\ \frac{1}{2} & -c_1 < x < c_2 \\ 1 & x > c_2
               \end{array} \right. \quad (c_1,c_2 \ge 0, \max\{c_1,c_2\}>0). \label{H4}
\end{align}
\end{theorem}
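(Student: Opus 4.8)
The plan is to exploit the convexity of $U_n$ to convert the event on $m_n$ into an event on the monotone statistic $V_n^+$, and then to read off the asymptotics from a central limit theorem for $U$-statistics together with a stochastic-equicontinuity estimate. Since $m_n$ is the smallest minimizer of the convex process $U_n$, the description (\ref{AUn}) yields the switching relation $\{m_n \le x\} = \{V_n^+(x) \ge 0\}$, so that
\[
 H_n(m + a_n x) = \mathbb{P}\big(V_n^+(m + a_n x) \ge 0\big) = \mathbb{P}\big(\sqrt{n}\,V_n^+(m + a_n x) \ge 0\big).
\]
First I would decompose, using $V(m)=0$ from (A1),
\[
 \sqrt{n}\,V_n^+(m + a_n x) = \underbrace{\sqrt{n}\,V_n^+(m)}_{=:\,\xi_n} + R_n(x) + \delta_n(x),
\]
where $R_n(x) := \sqrt{n}\big[(V_n^+(m+a_nx)-V(m+a_nx)) - (V_n^+(m)-V(m))\big]$ is the increment of the centered $U$-process and $\delta_n(x)=\sqrt{n}V(m+a_nx)$ is the deterministic drift from (\ref{nscond}).

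The two analytic inputs are as follows. By the Hoeffding decomposition of the $U$-statistic $V_n^+(m)$ with kernel $K(\textbf{x})=D^+h(\textbf{x},m)$, assumptions (A2) and (A3) give the non-degenerate central limit theorem $\xi_n \xrightarrow{d} N(0,\sigma^2)$ with $\sigma^2=l^2\zeta$. The main obstacle is to show the negligibility $R_n(x)\xrightarrow{\;\mathbb{P}\;}0$ for each fixed $x$. I would bound the variance of $R_n(x)$ by $l^2\,\zeta_{1,n}$ plus lower-order terms, where $\zeta_{1,n}$ is the variance of the first Hoeffding projection of the increment kernel $g_n(\textbf{x})=D^+h(\textbf{x},m+a_nx)-D^+h(\textbf{x},m)$. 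Monotonicity of $t\mapsto D^+h(\textbf{x},t)$ (convexity of $h(\textbf{x},\cdot)$) sandwiches $g_n$ between $0$ and the $L^2$-envelopes supplied by (A4), and right-continuity of $D^+h(\textbf{x},\cdot)$ gives $g_n\to 0$ pointwise for $x>0$; for $x<0$ the pointwise limit is the derivative jump $D^-h(\textbf{X},m)-D^+h(\textbf{X},m)$, which is non-positive with mean $D^-U(m)-D^+U(m)=0$ by (A1), hence vanishes almost surely. Dominated convergence then yields $\zeta_{1,n}\to 0$, so $R_n(x)\to 0$ in $L^2$.

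Combining the two inputs gives $\sqrt{n}V_n^+(m+a_nx)=\eta_n+\delta_n(x)$ with $\eta_n:=\xi_n+R_n(x)\xrightarrow{d}N(0,\sigma^2)$. If (2) holds, then by symmetry of the normal law $\mathbb{P}(\eta_n+\delta_n(x)\ge 0)\to\Phi_\sigma(\delta(x))$, treating $\delta(x)=\pm\infty$ separately, which is (1) with $H=\Phi_\sigma\circ\delta$. For the converse I would write $H_n(m+a_nx)=G_n(\delta_n(x))$, where $G_n$ is the distribution function of $-\eta_n$; since the limit $\Phi_\sigma$ is continuous, Pólya's theorem gives $G_n\to\Phi_\sigma$ uniformly, and a subsequence argument using the strict monotonicity of $\Phi_\sigma$ recovers $\delta_n(x)\to\Phi_\sigma^{-1}(H(x))=:\delta(x)$ at every $x\in C_H$ (with the conventions $\Phi_\sigma^{-1}(0)=-\infty$, $\Phi_\sigma^{-1}(1)=+\infty$). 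Since $C_H$ is co-countable it is dense, giving (2) with $D=C_H$ and $H=\Phi_\sigma\circ\delta$.

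For the classification I would pass to $\delta=\Phi_\sigma^{-1}\circ H$, using that $\Phi_\sigma$ is an increasing bijection of $[-\infty,\infty]$ onto $[0,1]$, so that classifying $H$ is equivalent to classifying $\delta$. Monotonicity of $V$ and (A1) force $\delta$ to be non-decreasing with $\delta(0)=0$, $\delta\ge 0$ on $(0,\infty)$ and $\delta\le 0$ on $(-\infty,0)$, while properness of $H$ forces $\delta(\pm\infty)=\pm\infty$. The decisive structural point is that $\delta$ arises as a limit under the single normalization $(\sqrt{n},a_n)$: a convergence-to-types/regular-variation argument then shows that each one-sided restriction of $\delta$ is either a genuine power law $\pm c|x|^\alpha$ (regular variation of $V$ at $m\pm$) or a threshold shape (identically $0$ up to a point, then $\pm\infty$), and, because the two branches share the sequence $a_n$, a common exponent $\alpha$ is forced whenever both branches are power laws. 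Enumerating the admissible left/right combinations produces exactly the four shapes (\ref{H1})--(\ref{H4}); non-degeneracy at $0$ excludes the pure jump and yields $\max\{c_1,c_2\}>0$ in class 4, and disjointness is immediate from the shapes. I expect the hardest part here to be ruling out ``mixed'' limits (a power law on one side, a threshold on the other), which is precisely what the coupling through the single sequence $a_n$ prevents; this is the step realizing Smirnov's \cite{Smirnov} classification.
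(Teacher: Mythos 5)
Your treatment of the equivalence of (1) and (2) is essentially the paper's own argument: the same switching relation $\{m_n \le t\}=\{D^+U_n(t)\ge 0\}$, the same decomposition of $\sqrt{n}\,V_n^+(m+a_nx)$ into a $U$-statistic CLT term, a negligible centered increment, and the drift $\delta_n(x)$, the same variance bound via the Hoeffding projection, and the same P\'{o}lya-theorem device for handling $G_n(\delta_n(x))$. (Your use of dominated convergence with the (A4)-envelope, in place of the paper's monotone convergence along monotone subsequences of $(a_n)$, is a harmless variant.) One omission in this part: in the direction (2)$\Rightarrow$(1) your argument only yields $H_n(a_nx+m)\to\Phi_\sigma(\delta(x))$ for $x$ in the dense set $D$, whereas (1) asserts convergence at \emph{every} continuity point of $H$. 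The paper closes this by a monotonicity lemma (increasing functions converging on a dense set converge at all continuity points of an increasing limit, Lemma 5.74 in Witting and M\"{u}ller-Funk); that step is routine but has to be stated, and it is also what justifies the identification $D=C_H$.

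The genuine gap is in the classification. You assert that a ``convergence-to-types/regular-variation argument'' shows each one-sided restriction of $\delta$ is a power law or a threshold shape, and that ``the coupling through the single sequence $a_n$'' rules out bad combinations, but no such argument is actually given, and your guiding claim is misstated: classes 1 and 2 \emph{are} mixed limits in your sense (a power law on one side, $\delta\equiv\mp\infty$ on the other), so mixedness as such cannot be what gets excluded. What the paper actually proves is a functional equation: passing to the subsequence $(nk)_n$ in (2) gives $H_n(a_{nk}x+m)\to\Phi_\sigma(\delta(x)/\sqrt{k})$, and comparing this with $H_n(a_nx+m)\to H(x)$ via the Convergence of Types theorem yields that $\alpha_k:=\lim_n a_{nk}/a_n$ exists in $(0,\infty)$ and that $\delta(x)=\sqrt{k}\,\delta(\alpha_k x)$ for all $x$ and all $k$. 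The four classes then emerge from a case analysis on $\alpha_k$: if $\alpha_k>1$ for some $k\ge 2$, the recursion $\delta(\alpha_k^l x_0)=k^{-l/2}\delta(x_0)$ forces $H$ degenerate at $0$ (excluded); if $\alpha_k<1$, the same recursion produces the three power-law shapes, with the common exponent in class 3 and the impossibility of a flat piece of positive length both being consequences of the equation (the explicit solutions are taken from Smirnov/Gnedenko); if $\alpha_k=1$, then $\delta(x)=\sqrt{k}\,\delta(x)$ forces $\delta(x)\in\{-\infty,0,\infty\}$, i.e.\ class 4. Without deriving and solving this functional equation (or supplying an equivalent regular-variation argument for $V$ near $m$), ``enumerating the admissible left/right combinations'' describes the answer rather than proves it; in particular, nothing in your sketch excludes the configuration $\delta\equiv 0$ on $(-c_1,0)$ with $c_1>0$, $\delta=-\infty$ below $-c_1$, and a power law on the right, which belongs to none of the four classes.
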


If $Y$ is a real random variable with distribution function $H$, then the convergence in (\ref{dconv}) is equivalent to the distributional convergence \begin{equation} \label{rv}
\frac{m_n-m}{a_n} \stackrel{\mathcal{D}}{\rightarrow} Y.
\end{equation}
Notice that our Theorem \ref{thm1} is the source of a very rich asymptotic theory with unusual normalizing sequences $(a_n)$ going far beyond the square root asymptotic normality.
And even if $H$ is only a sub distribution function, then the limit $Y$ above is an extended random variable with values in $\overline{\mathbb{R}}= \mathbb{R} \cup \{-\infty,\infty\}=[-\infty,\infty]$ and the distributional convergence takes place in the (metric) space $\overline{\mathbb{R}}$.\\

In the literature estimators like $m_n$ are called $M$-estimators. Apparently the first rigorous analysis of such estimators in case $l=1$ goes back
to Huber \cite{Huber1}, who considers $S=\mathbb{R}$ and $h(x,t)=\phi(t-x)$ with $\phi:\mathbb{R} \rightarrow \mathbb{R}$ a convex function.
Br{\o}ns et. al. \cite{Brons} generalize Huber's idea to so-called $\psi$-means, where $\psi$ corresponds to $D^+h$. Both authors show square root asymptotic normality. In a next step Habermann \cite{Habermann}, Niemiro \cite{Niemiro} and Hjort and Pollard \cite{Hjort} consider more general spaces $S$ and extend the approach form the real line to the euclidian space $\mathbb{R}^d$. Here, too, the authors show convergence in distribution to a (multivariate) normal law with rate $\sqrt{n}$. Finally, Bose \cite{Bose} extends these results to the case $l \ge 2$. \\

The paper is organized as follows: In section 2 we prove Theorem \ref{thm1}.  Section 3 is dedicated to characterizing the domains of attraction of the four possible classes. This means that -alternatively to (\ref{nscond})- other equivalent conditions are specified to ensure that the convergence (\ref{dconv}) holds
and in addition it is indicated to which class $H$ belongs. In both sections basic ideas of Smirnov \cite{Smirnov} are used. In section 4 we extend our results for the smallest minimizer to general minimizing points of $U_n$. Section 5 is concerned with smooth derivatives. Namely, if $V=D^+U$ has left-and right derivatives at point $m$, then this results in a square root asymptotic with limits of normal type. Section 6 deals with the huge class of estimators based on mappings $h$ of the shape
$h(\textbf{x},t)=\phi(t-k(\textbf{x}))$, where $\phi:\mathbb{R} \rightarrow \mathbb{R}$ is a convex function and $k:S^l \rightarrow \mathbb{R}$
is any symmetric and measurable map. The special choice  $\phi(t)=t (1_{\{t \ge 0\}}-\alpha)$ and $k(x)=x$ yields Smirnov's \cite{Smirnov} theory for $\alpha$-quantile estimators, $\alpha \in (0,1)$. In fact, a good deal more examples are presented and discussed. The last section 7 (Appendix)
contains several technical results, which are used in our proofs.

\section{Proof of Theorem \ref{thm1}}
The following observation is fundamental:
\begin{equation} \label{eq1}
 H_n(a_n x+m) = \mathbb{P}(m_n \le m+a_n x)=\mathbb{P}(D^+U_n(m+a_n x) \ge 0),
\end{equation}
where the last equality follows from Theorem 1.1 in Ferger \cite{Ferger1}.

For $\textbf{i}=(i_1,\ldots,i_l)$ let $\textbf{X}_\textbf{i}:=(X_{i_1},\ldots,X_{i_l})$. Then
$$D^+U_n(m+a_n x)= \binom{n}{l}^{-1} \sum_{\textbf{i}} D^+h(\textbf{X}_\textbf{i},m+a_n x),$$ where the summation $\sum_\textbf{i}$ extends over
all $\textbf{i}$ with $1\le i_1<i_2<\ldots<i_l\le n$. Here the number of summands is equal to $\binom{n}{l}$, whence
\begin{eqnarray}
 & &\{D^+U_n(m+a_n x) \ge 0\} \nonumber\\
 &=&\{\binom{n}{l}^{-1}\sum_{\textbf{i}} (D^+h(\textbf{X}_\textbf{i},m+a_n x)-D^+U(m+a_n x))\ge -D^+U(m+a_n x)\}\nonumber \\
 &=&\{-\sqrt{n}\binom{n}{l}^{-1}\sum_{\textbf{i}} (D^+h(\textbf{X}_\textbf{i},m+a_n x)-D^+U(m+a_n x)) \le \sqrt{n} D^+U(m+a_n x)\} \nonumber\\
 &=&\{-L_n(x) \le \delta_n(x)\}, \label{eq2}
\end{eqnarray}
where $$L_n(x)=\sqrt{n} \binom{n}{l}^{-1}\sum_{\textbf{i}} (D^+h(\textbf{X}_\textbf{i},m+a_n x)-D^+U(m+a_n x)).$$
By inserting the term $D^+h(\textbf{X}_\textbf{i},m)$
it follows that
\begin{eqnarray}
 & &L_n(x) = \sqrt{n} \binom{n}{l}^{-1}\sum_{\textbf{i}}D^+h(\textbf{X}_\textbf{i},m) \nonumber\\
        &+& \sqrt{n} \binom{n}{l}^{-1}\sum_{\textbf{i}} (D^+h(\textbf{X}_\textbf{i},m+a_n x)-D^+h(\textbf{X}_\textbf{i},m)-D^+U(m+a_n x)). \label{repL}
\end{eqnarray}
One can easily see from the definition that $D^+h(\textbf{x},m)$ is symmetric in $\textbf{x}$. Moreover, $\mathbb{E}[D^+h(\textbf{X}_\textbf{i},m)]=D^+U(m)=0$ by (\ref{Dpm}) and assumption (A1). Thus the first summand
in the decomposition (\ref{repL}) is a normalized and centered $U$-statistic with kernel $D^+h(\textbf{x},m)$. Consequently, (A3) allows us to apply the Central Limit Theorem for U-statistics, confer, e.g., Proposition 4.2.1 in Koroljuk and Borovskich \cite{Koroljuk}, which yields:
\begin{equation} \label{CLTU}
 \sqrt{n} \binom{n}{l}^{-1}\sum_{\textbf{i}}D^+h(\textbf{X}_\textbf{i},m) \stackrel{\mathcal{D}}{\longrightarrow} N(0, \sigma^2).
\end{equation}
Let the second summand in the decomposition (\ref{repL}) be denoted by $N_n(x)$, i.e.
\begin{equation} \label{neg}
 N_n(x):=\sqrt{n} \binom{n}{l}^{-1}\sum_{\textbf{i}} (D^+h(\textbf{X}_\textbf{i},m+a_n x)-D^+h(\textbf{X}_\textbf{i},m)-D^+U(m+a_n x)).
\end{equation}
Another application of (\ref{Dpm}) and (A1) gives that $N_n(x)$ is also a centered and normalized $U$-statistic
with symmetric kernel $$K_n(\textbf{x})\equiv K_{n,x}(\textbf{x})= D^+h(\textbf{x},m+a_n x)-D^+h(\textbf{x},m)-D^+U(m+a_n x).$$
By Lemma A on p.183 in Serfling \cite{Serfling} we have that for all $n \in \mathbb{N}$ and every $x \in \mathbb{R}$:
\begin{eqnarray}
Var[N_n(x)] &\le& l \;  Var[K_n(\textbf{X})] \nonumber\\
            &=& Var[D^+h(\textbf{X},m+a_n x)-D^+h(\textbf{X},m)] \nonumber\\
            &\le& \mathbb{E}[(D^+h(\textbf{X},m+a_n x)-D^+h(\textbf{X},m))^2] \label{N}
\end{eqnarray}
If $x=0$, then $N_n(x)=N_n(0)=0$. For the case $x \neq 0$ we will use the subsequence criterion and that every positive null sequence contains a subsequence that converges from above to zero.
Assume that $x >0$. Let $(n_k)_{k \in \mathbb{N}}$ be a subsequence of the natural numbers. Then $(a_{n_k})_{k \in \mathbb{N}}$ contains a subsequence $(a_{n_{k_l}})_{l \in \mathbb{N}}$ such that
$a_{n_{k_l}} \downarrow 0$ as $l \rightarrow \infty$. Let us write $b_l=a_{n_{k_l}}$ for short. Since $h(\textbf{x}, \cdot)$ is a convex function, it follows that
$D^+h(\textbf{x}, \cdot)$ is an increasing and right-continuous function on $\mathbb{R}$ for each $\textbf{x} \in S^l$, confer Theorem 24.1 in Rockafellar \cite{Rockafellar}.  Therefore $Z_l:=D^+h(\textbf{X},m+b_l x)-D^+h(\textbf{X},m) \ge 0$ and because of $m+b_l x \downarrow m$ the squares $Z_l^2=(D^+h(\textbf{X},m+b_l x)-D^+h(\textbf{X},m))^2 \downarrow 0, l \rightarrow \infty$. According to (\ref{incr}) of Lemma \ref{LA1} in the appendix  the $Z_l^2$ are
$\mathbb{P}$-integrable for eventually all $l \in \mathbb{N}$. Thus an application of the Monotone Convergence Theorem yields that
\begin{equation} \label{ub}
\mathbb{E}[(D^+h(\textbf{X},m+b_l x)-D^+h(\textbf{X},m))^2] \rightarrow 0, l \rightarrow \infty.
\end{equation}
Conclude from (\ref{N}) that $Var[N_{n_{k_l}}(x)] \rightarrow 0$, whence by the subsequence criterion actually $Var[N_n(x)] \rightarrow 0$ as $n \rightarrow \infty$ for all $x>0$.
Next, assume that $x<0$. Then $m+b_l x \uparrow m$, whence $0 \ge Z_l \uparrow D^-h(\emph{X},m)-D^+h(\textbf{X},m)=:Z$ upon noticing that $D^+h(\textbf{x},m-0)=D^-h(\textbf{x},m)$ by Theorem 24.1 in Rockafellar \cite{Rockafellar}. Now, $-Z \ge 0$ and by linearity
$$\mathbb{E}[-Z]= \mathbb{E}[D^+h(\textbf{X},m)]-\mathbb{E}[D^-h(\textbf{X},m)]= D^+U(m)-D^-U(m)=0,$$
where the second last equality holds by (\ref{Dpm}) and the last equality by assumption (A1). Thus $Z=0 \; \mathbb{P}$-a.s. and since $Z_l^2 \downarrow Z^2 \stackrel{a.s.}{=}0$ another application of the Monotone Convergence Theorem shows that (\ref{ub}) is true also for every negative $x$. With the same arguments as for positive $x$ we obtain that $Var[N_n(x)] \rightarrow 0$ as $n \rightarrow \infty$ for all $x<0$.
In summary, $\mathbb{E}[N_n(x)]=0$ for all $n \in \mathbb{N}$ and $Var[N_n(x)] \rightarrow 0, n \rightarrow \infty$ for all $x \in \mathbb{R}$. So, we can infer that
$N_n(x) \stackrel{\mathbb{P}}{\longrightarrow} 0$ for every $x \in \mathbb{R}$. Recall that
$$
 L_n(x) = \sqrt{n} \binom{n}{l}^{-1}\sum_{\textbf{i}}D^+h(\textbf{X}_\textbf{i},m)+N_n(x) \quad \forall \; x \in \mathbb{R}.
$$
Hence (\ref{CLTU}) and Slutsky's theorem ensure that
\begin{equation} \label{dc}
 L_n(x) \stackrel{\mathcal{D}}{\longrightarrow} N(0,\sigma^2) \quad \forall \; x \in \mathbb{R}.
\end{equation}

The limit law (\ref{dc}) makes it easy to prove the following result.\\

\begin{theorem} \label{thm2} If (A0)-(A4) are satisfied, then
$$
 R_n(x):=H_n(a_n x+m)-\Phi_\sigma(\delta_n(x)) \rightarrow 0 \quad \forall \; x \in \mathbb{R}.
$$
\end{theorem}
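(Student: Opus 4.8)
The starting point is to read $H_n(a_nx+m)$ as a distribution function evaluated at a moving argument. Fixing $x\in\mathbb{R}$ and combining (\ref{eq1}) with (\ref{eq2}) gives
$$
 H_n(a_nx+m)=\mathbb{P}(-L_n(x)\le \delta_n(x))=G_n(\delta_n(x)),
$$
where $G_n$ denotes the distribution function of $-L_n(x)$. Thus $R_n(x)=G_n(\delta_n(x))-\Phi_\sigma(\delta_n(x))$, and the whole task is to control the difference of two distribution functions at the (in general $n$-dependent) point $\delta_n(x)$.

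Next I would identify the limiting behaviour of $G_n$. By (\ref{dc}) we have $L_n(x)\stackrel{\mathcal{D}}{\longrightarrow}N(0,\sigma^2)$, and since the centered normal law is symmetric this yields $-L_n(x)\stackrel{\mathcal{D}}{\longrightarrow}N(0,\sigma^2)$ as well. Hence $G_n(y)\to\Phi_\sigma(y)$ at every continuity point of $\Phi_\sigma$; as $\Phi_\sigma$ is continuous on all of $\mathbb{R}$, this convergence holds for every $y\in\mathbb{R}$.

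The decisive step is to upgrade this pointwise convergence to uniform convergence. Because the limit $\Phi_\sigma$ is continuous (indeed continuous and strictly increasing) on $\mathbb{R}$, P\'olya's theorem applies and gives
$$
 \Delta_n:=\sup_{y\in\mathbb{R}}|G_n(y)-\Phi_\sigma(y)|\longrightarrow 0.
$$
This uniformity is exactly what makes the moving argument harmless: for each $n$ the quantity $\delta_n(x)=\sqrt{n}\,V(m+a_nx)$ is a finite real number (by (A0) the map $V=D^+U$ is real-valued), so we may simply insert $y=\delta_n(x)$ into the supremum bound to obtain
$$
 |R_n(x)|=|G_n(\delta_n(x))-\Phi_\sigma(\delta_n(x))|\le \Delta_n\longrightarrow 0.
$$
Since $x\in\mathbb{R}$ was arbitrary, the claim follows.

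I expect the main obstacle to be precisely the fact that $\delta_n(x)$ is not a fixed evaluation point and may escape to $\pm\infty$; ordinary weak convergence of $-L_n(x)$ only controls $G_n$ at fixed continuity points and is therefore insufficient. The essential observation is that the continuity of the Gaussian limit distribution function forces the convergence $G_n\to\Phi_\sigma$ to be uniform, after which evaluating at the drifting point $\delta_n(x)$ is immediate. No information about the actual limit of $\delta_n(x)$ is needed here; that finer analysis is what later drives the equivalence (1)$\Leftrightarrow$(2) of Theorem~\ref{thm1}.
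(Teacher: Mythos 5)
Your proof is correct and follows essentially the same route as the paper: rewrite $H_n(a_nx+m)$ via (\ref{eq1}) and (\ref{eq2}) as the distribution function of $-L_n(x)$ evaluated at $\delta_n(x)$, use (\ref{dc}) to get pointwise convergence to $\Phi_\sigma$, upgrade to uniform convergence by P\'olya's theorem, and bound $|R_n(x)|$ by the sup-norm. Your explicit remarks on the symmetry of the normal law (passing from $L_n(x)$ to $-L_n(x)$) and on why uniformity neutralizes the drifting evaluation point $\delta_n(x)$ are details the paper leaves implicit, but the argument is the same.
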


\begin{proof} By equations (\ref{eq1}) and (\ref{eq2}) we have that
\begin{equation} \label{eq3}
H_n(a_n x+m)=\mathbb{P}(-L_n(x) \le \delta_n(x))=G_{n,x}(\delta_n(x))
\end{equation}
for all $x \in \mathbb{R}$, where $G_{n,x}$ denotes the distribution function of $-L_n(x)$. It follows from (\ref{dc}) that
$G_{n,x}(u) \rightarrow \Phi_\sigma(u), n \rightarrow \infty,$ for every $u \in \mathbb{R}$. By P\'{o}lya's theorem one actually has
that $||G_{n,x}-\Phi_\sigma||:=\sup_{u \in \mathbb{R}}|G_{n,x}(u)-\Phi_\sigma(u)| \rightarrow 0$.
Since by (\ref{eq3})
$$
 |R_n(x)|=|G_{n,x}(\delta_n(x))-\Phi_\sigma(\delta_n(x))|\le ||G_{n,x}-\Phi_\sigma||,
$$
the assertion follows.
\end{proof}

From Theorem \ref{thm2}, in turn, one easily obtains the equivalence of (1) and (2) in Theorem \ref{thm1} and the relation between $H$ and $\delta$. We call this \emph{the first part of Theorem \ref{thm1}}.
Later on this knowledge is used to prove \emph{the second part}.\\

\begin{proof} (First part of Theorem \ref{thm1})  Assume (1) holds, where w.l.o.g. the function is rcll, because otherwise we can replace $H$ with $\tilde{H}$, confer the discussion in front of Theorem \ref{thm1}. Then by Theorem \ref{thm2}
$$
\Phi_\sigma(\delta_n(x))=(H_n(a_n x+m)-H(x))-R_n(x)+H(x) \rightarrow H(x) \quad \forall \; x \in C_H.
$$
Since $\Phi_\sigma$ is invertible, it follows that $\delta_n(x) \rightarrow \Phi_\sigma^{-1}(H(x))$ for every $x \in C_H$.
Thus (2) holds with $\delta(x)=\Phi_\sigma^{-1}(H(x)), x \in \mathbb{R,}$ and $D=C_H$, which is known to be dense in $\mathbb{R}$.

Conversely, assume that (2) is true. Then $H_n(a_n x+m)=R_n(x)+\Phi_\sigma(\delta_n(x)) \rightarrow \Phi_\sigma(\delta(x))$ for all $x \in D$ by Theorem \ref{thm2}.
Now, the functions $G_n$ given by $G_n(x):=H_n(a_n x+m), x \in \mathbb{R},$ are increasing for every $n \in \mathbb{N}$, because  the constants $a_n$ are positive. This also applies to $H$ with
$H(x)=\Phi_\sigma(\delta(x))$. To see this first note that the functions $\delta_n$ are increasing, because $V=D^+U$ is increasing.
Therefore, the limit function $\delta$ is increasing as well. This shows that $H$ is increasing. So, we have that
$G_n(x) \rightarrow H(x)$ for all $x$ in a dense subset $D$ and that all involved functions are increasing. Then Lemma 5.74 in Witting and M\"{u}ller-Funk \cite{Witting}
says that the convergence holds for all $x \in C_H$, which is (1). (Notice that in that Lemma 5.74 it is required that the set $D$ is also denumerable. However, checking the (short) proof shows that countability is not needed.)
\end{proof}

Note that under (A1), for all $n \in \mathbb{N}, \delta_n(x) \le 0$ and $\delta_n(x) \ge 0$ according as $x \le 0$ or $x \ge 0$. 
Thus  by taking the limit $n \rightarrow \infty$ it follows that $\delta:\mathbb{R} \rightarrow [-\infty,\infty]$ has the following properties: it is increasing (as we saw in the proof above), non-positive on $(-\infty,0]$,
non-negative on $[0,\infty)$ and in particular $\delta(0)=0$. In the following we will show that $\delta$ can indeed have only four different forms, see (\ref{d1})-(\ref{d4}) below. We take the first step on the way there with the following result:\\

\begin{proposition} \label{funceq} Assume that in Theorem \ref{thm1} the limit $H$ in (1) is a distribution function not degenerated at zero. Then the limit $\delta$ in (2) satisfies the following functional equations:
\begin{equation} \label{fe}
  \delta(x)= \sqrt{k}\; \delta(\alpha_k x) \quad \forall \; x \in \mathbb{R} \quad \forall \; k \in \mathbb{N},
\end{equation}
where the $\alpha_k$ are positive constants given by $\alpha_k = \lim_{n \rightarrow \infty} \frac{a_{nk}}{a_n}$.
\end{proposition}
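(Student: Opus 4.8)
The plan is to combine the subsequence stability of distributional convergence with the representation $H=\Phi_\sigma\circ\delta$ already established, thereby turning the statement into a convergence-to-types argument for the increasing limit $\delta$.

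First I would fix $k\in\mathbb{N}$ and pass to the subsequence $(nk)_n$. Since (1) is equivalent to $(m_n-m)/a_n\stackrel{\mathcal D}{\to}Y$, restricting to this subsequence gives $(m_{nk}-m)/a_{nk}\stackrel{\mathcal D}{\to}Y$, that is $H_{nk}(a_{nk}x+m)\to H(x)$ for $x\in C_H$. Theorem \ref{thm2} holds verbatim along $(nk)_n$, so $R_{nk}(x)\to 0$, and invertibility of $\Phi_\sigma$ yields $\delta_{nk}(x)=\sqrt{nk}\,V(m+a_{nk}x)\to\delta(x)$ for all $x\in C_H$. Dividing by $\sqrt k$ produces a second convergence at the \emph{same} scale $\sqrt n$, namely $\sqrt n\,V(m+a_{nk}x)\to\delta(x)/\sqrt k$, to be compared with the original $\sqrt n\,V(m+a_nx)\to\delta(x)$.

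Second, I would show that $a_{nk}/a_n$ stays bounded away from $0$ and $\infty$. If some subsequence had $a_{nk}/a_n\to\infty$, then for fixed $x$ and $y$ of equal sign the ordering of $m+a_{nk}x$ and $m+a_ny$ is eventually determined, and monotonicity of $V=D^+U$ together with the two displayed limits forces an inequality of the shape $\delta(x)\ge\sqrt k\,\delta(y)$ on the positive side, respectively $\delta(x)\le\sqrt k\,\delta(y)$ on the negative side. Letting $|y|\to\infty$ and recalling that $H$ is a proper distribution function, so that $\delta(y)\to\pm\infty$, contradicts the existence of a point at which $\delta$ is finite of the appropriate sign; such a point is guaranteed precisely because $H$ is not degenerate at $0$. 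A symmetric squeeze excludes $a_{nk}/a_n\to 0$.

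Third, given boundedness, any subsequential limit $\alpha$ of $a_{nk}/a_n$ satisfies $\delta(x)=\sqrt k\,\delta(\alpha x)$ at continuity points: this follows from $\sqrt n\,V(m+a_n\alpha x)\to\delta(\alpha x)$ by sandwiching $a_{nk}x$ between $(1\mp\varepsilon)\alpha a_nx$ for the relevant sign of $x$, applying monotonicity of $V$, and letting $\varepsilon\downarrow 0$ through continuity points of $\delta$. Because $\delta$ is non-degenerate, being strictly increasing over a genuine range on at least one half-line again by the non-degeneracy of $H$, the relation $\delta(\alpha x)=\delta(\alpha' x)$ forces $\alpha=\alpha'$ for two such limits, so $\alpha_k:=\lim_n a_{nk}/a_n$ exists and is positive. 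Extending the identity from $C_H$ to all of $\mathbb{R}$ by monotonicity of $\delta$ and density of $C_H$ yields (\ref{fe}). The main obstacle is the bookkeeping around the infinite values and jumps of $\delta$: on the half-lines where $\delta\equiv\pm\infty$, as happens in some of the four classes, both the squeeze and the uniqueness of $\alpha_k$ must be run on the opposite, genuinely varying half-line, and it is exactly here that the hypothesis that $H$ is a non-degenerate distribution function is indispensable, since without it every $\alpha_k>0$ would solve the functional equation.
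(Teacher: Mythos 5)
Your overall architecture is sound and genuinely different from the paper's: the paper obtains both the existence of $\alpha_k=\lim_n a_{nk}/a_n>0$ and the identity $H^{(k)}(x)=H(\alpha_k x)$ in one stroke, by applying the first part of Theorem \ref{thm1} along the subsequence $(nk)_n$ and then citing the Convergence of Strict Types theorem (Theorem 25, p.265 in Fristedt and Gray \cite{Fristedt}), whereas you re-derive that conclusion by hand from monotonicity of $V$. Your first two steps are correct: passing to $(nk)_n$ and dividing by $\sqrt k$ matches the paper, and your exclusion of $a_{nk}/a_n\to\infty$ (and symmetrically $\to 0$) works, since it forces $\delta\equiv+\infty$ on $(0,\infty)$ \emph{and} $\delta\equiv-\infty$ on $(-\infty,0)$, i.e. $H$ degenerate at zero. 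The sandwich argument giving $\delta(x)=\sqrt k\,\delta(\alpha x)$ for every subsequential limit $\alpha$ is also fine.

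The gap is in your uniqueness step. You assert that non-degeneracy of $H$ makes $\delta$ ``strictly increasing over a genuine range on at least one half-line'', and you lean on this to get $\delta(\alpha x)=\delta(\alpha' x)\Rightarrow\alpha=\alpha'$. That assertion is false: at this stage of the argument nothing rules out that $\delta$ has the form later called class 4, namely $-\infty$ on $(-\infty,-c_1)$, $0$ on $(-c_1,c_2)$, $+\infty$ on $(c_2,\infty)$; such limits genuinely occur (e.g. the $a_n=1/\log n$ example mentioned in Example \ref{exp2}), $H$ is then a non-degenerate proper distribution function, yet $\delta$ is strictly increasing on no interval of either half-line, so your fallback of ``running the argument on the opposite, genuinely varying half-line'' has nothing to run on. Consequently the existence of $\lim_n a_{nk}/a_n$, which is part of the statement, is not established in this case. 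The step can be repaired by a different mechanism: if $\alpha\neq\alpha'$ were two subsequential limits, then $\delta(y)=\delta(ry)$ on a dense set with $r=\alpha'/\alpha\neq 1$; iterating this scaling relation and using monotonicity forces $\delta$ to be constant (possibly infinite) on each open half-line, and since $H=\Phi_\sigma\circ\delta$ is a proper distribution function these constants must be $-\infty$ and $+\infty$, i.e. $H$ is degenerate at zero, a contradiction. Equivalently: either $\delta$ increases strictly somewhere, or it has a jump at a finite nonzero point, and in the latter case the jump location, which scales like $c/\alpha$, identifies $\alpha$. With that substitution your proof goes through; as written, it fails exactly in the class-4 regime.
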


\begin{proof} It should be remembered that the first part of Theorem \ref{thm1} has already been proved and therefore can be used in what follows. For each fixed $k \in \mathbb{N}, (nk)_{n \in \mathbb{N}}$ is a subsequence of the natural numbers. From (2) of Theorem \ref{thm1} it follows that
$\delta_{nk}(x)=\sqrt{n k} V(m+ a_{n k} x) \rightarrow \delta(x)$ for every $x \in C_H$, whence
\begin{equation*}
\sqrt{n} V(m+ a_{n k} x) \rightarrow \frac{\delta(x)}{\sqrt{k}}=: \delta^{(k)}(x)  \quad \forall \; x \in C_H.
\end{equation*}
Since $a_{nk} \rightarrow 0, n \rightarrow \infty$, another application of Theorem \ref{thm1} (first part) yields that
\begin{equation} \label{c1}
 H_n(a_{nk}x+m) \rightarrow \Phi_\sigma(\delta^{(k)}(x)) =:H^{(k)}(x)  \quad \forall \; x \in C_H = C_{H^{(k)}}.
\end{equation}
Here, the equality of the continuity-sets follows from $C_\delta=C_{\delta^{(k)}}$.
By the convergence (\ref{c1}) and by the convergence in (1) of Theorem \ref{thm1} the Convergence of Strict Types (confer Theorem 25, p.265 in Fristedt and Gray \cite{Fristedt}) says that $\alpha_k = \lim_{n \rightarrow \infty} \frac{a_{nk}}{a_n}>0$ and that $H^{(k)}(x)=H(\alpha_k x)$ for all $x \in \mathbb{R}$. Using the identity $H(x)= \Phi_\sigma(\delta(x)), x \in \mathbb{R},$ from Theorem \ref{thm1} we obtain that $\Phi_\sigma(\delta(x)/\sqrt{k})=\Phi_\sigma(\delta(\alpha_k x)), x \in \mathbb{R}$, which finally results in
$\delta(x)= \sqrt{k}\; \delta(\alpha_k x) \text{ for all } x \in \mathbb{R}$,
because $\Phi_\sigma$ is invertible. 
\end{proof}

Note that $\alpha_1=1$ no matter how the sequence $(a_n)_{n \in \mathbb{N}}$ looks like.
Moreover, the functional equation (\ref{fe}) reduces to $\delta(x)=\delta(x)$, a tautology.
In the analysis of the functional equations (\ref{fe}) we distinguish 3 possible cases. \\

\textbf{Case 0}: Suppose there exists some natural $k \ge 2$ such that $\alpha_k > 1$.
\begin{itemize}
\item[(i)] Let $x > 0.$ Then $\alpha_k x > x$ and thus $\delta(\alpha_k x) \ge \delta(x)$ by monotonicity of $\delta$. As a consequence
\begin{equation} \label{or}
 \delta(x) \le 0 \quad \text{or} \quad \delta(x)=\infty,
\end{equation}
 because otherwise $0 < \delta(x) < \infty$ and therefore $1 \le \frac{\delta(\alpha_k x)}{\delta(x)} = \frac{1}{\sqrt{k}}$, where the last equality follows from (\ref{fe}) by division with $\delta(x) \in (0,\infty)$. It follows that $\sqrt{k} \le 1$ in contradiction to $k \ge 2$. Next, we will show that $\delta(x) > 0$ for every $x >0$. In fact, assume that there is some $x_0>0$ such that
    \begin{equation} \label{le0}
    \delta(x_0) \le 0.
    \end{equation}
    Then actually $\delta(x) \le 0$ for all $x>0$. To see this fix some $x>0$ and
    put
    $$y_l:=\alpha_k^l x_0, \;\; l \in \mathbb{N}_0.$$ Since $\alpha_k >1$ and $x_0$ is positive, it follows that $y_l \uparrow \infty$ as $ l \rightarrow \infty$. In particular, $y_l \ge x$ for some sufficiently large $l$. Consequently,
    \begin{equation} \label{yl}
    \delta(x) \le \delta(y_l).
    \end{equation}
    But
    \begin{equation*}
     \delta(y_l)=\delta(\alpha_k \alpha_k^{l-1} x_0)=\delta(\alpha_k y_{l-1})=k^{-1/2} \delta(y_{l-1})=\ldots=k^{-l/2} \delta(y_0)=k^{-l/2}\delta(x_0).
    \end{equation*}
    Note that this formula applies to every $\alpha_k>0$ and every $x_0 \in \mathbb{R}$. For later use we write down the basic equality:
    \begin{equation} \label{solrecur}
     \delta(y_l)=k^{-l/2} \delta(x_0).
    \end{equation}
    Now it follows from (\ref{yl}),(\ref{solrecur}) and (\ref{le0}) that $\delta(x) \le 0$ for all $x>0$ and therefore $H(x)=\Phi_\sigma(\delta(x))\le1/2$ for all $x > 0$ in contradiction to $H$ is a distribution function.
    Thus we have shown that $\delta(x) >0$ for all $x>0$, which by (\ref{or}) ensures that
    \begin{equation} \label{inf}
    \delta(x)= \infty \quad \forall \; x>0.
    \end{equation}
\item[(ii)] Let $x<0$. This case can be treated analogously as in (i) above. One obtains:
    \begin{equation} \label{minf}
    \delta(x)= -\infty \quad \forall \; x<0.
    \end{equation}
\end{itemize}
Since $H(x)= \Phi_\sigma(\delta(x))$ it follows from (\ref{inf}) and (\ref{minf}) that $H(x)=1$ for all $x>0$ and $H(x)=0$ for all $x<0$, which means that $H$ is a distribution function degenerated at zero in contrast to our assumption. This means that \textbf{Case 0 cannot occur}.\\

\textbf{Case 1}: There exists some natural $k \ge 2$ such that $\alpha_k <1$.
\begin{itemize}
\item[(i)] Let $x < 0.$ Then $\alpha_k x > x$ and by (\ref{fe}),  $\frac{1}{\sqrt{k}} \delta(x)=\delta(\alpha_k x) \ge \delta(x)$. As a consequence
$(+) \;\; \delta(x) \ge \sqrt{k} \delta(x)$. We obtain that
\begin{equation*} 
 \delta(x) \le 0 \quad \text{or} \quad \delta(x)=\infty,
\end{equation*}
because otherwise $0 < \delta(x) < \infty$ and therefore $1 \ge \sqrt{k}$ by division in (+), a contradiction to $k \ge 2$.
The case $\delta(x)=\infty$ is impossible for each $x<0$, because $\delta(x)\le \delta(0)=0< \infty$. So it remains the case
$(a) \;\;\delta(x) \le 0 \quad \forall \; x<0$. Here we make a further case distinction, namely:
\begin{itemize}
\item[(a1)] $\delta(x) > -\infty \quad \forall \; x<0.$
\item[(a2)] There exists some $x_0<0$ such that $\delta(x_0)=-\infty.$ We claim that then $\delta(x)=-\infty$ actually for all $x<0$.
To see this assume that there is some $x<0$ with $\delta(x)> -\infty$. Put $y_l:=\alpha_k^l x_0, l \in \mathbb{N}_0$. Since $\alpha_k \in (0,1)$ and
$x_0<0$, it follows that $y_l \uparrow 0, l \rightarrow \infty$, whence $y_l \ge x$ for some $l$ sufficiently large. An application of (\ref{solrecur}) gives that
$\delta(x_0)=k^{l/2} \delta(y_l) \ge k^{l/2} \delta(x) > -\infty$, so that $\delta(x_0)> -\infty$ as well in contradiction to our assumption.
\end{itemize}
So with a view to (a), only the two cases remain in total:
\begin{itemize}
\item[(a1)] $-\infty < \delta(x) \le 0 \quad \forall \; x<0.$
\item[(a2)] $\delta(x)=-\infty \quad \forall \; x<0.$
\end{itemize}
Similarly, for $x>0$ you get the only possible two cases:
\begin{itemize}
\item[(a3)] $0 \le \delta(x) <\infty \quad \forall \; x>0.$
\item[(a4)] $\delta(x)=\infty \quad \forall \; x>0.$
\end{itemize}
Of the 4 possible combinations, (a2) and (a4) are ruled out because the associated function $H(x)=\Phi_\sigma(\delta(x))$ is the same as in Case 0.
This leaves the following 3 combinations:
\begin{itemize}
\item[(1)] $\delta(x)=-\infty$ if $x<0$ and $0\le \delta(x)< \infty$  if  $x>0$.
\item[(2)] $-\infty < \delta(x) \le 0$ if $x<0$ and $\delta(x)= \infty$  if  $x>0$.
\item[(3)] $-\infty <\delta(x) \le 0$ if $x<0$ and $0 \le \delta(x)< \infty$  if  $x>0$.
\end{itemize}
Now the solutions of the functional equations (\ref{fe}) under the respective constraints (1)-(3)are:
\begin{align}
&\textbf{(1)}: \; \delta(x) = \left\{ \begin{array}{l@{\quad,\quad}l}
                 -\infty & x<0\\ c x^\alpha & x > 0
               \end{array} \right. \quad (c, \alpha>0). \label{d1}\\
&\textbf{(2)}: \; \delta(x) = \left\{ \begin{array}{l@{\quad,\quad}l}
                 -c |x|^\alpha & x < 0\\ \infty & x > 0
               \end{array} \right. \quad  (c, \alpha >0).\label{d2}\\
&\textbf{(3)}: \; \delta(x) = \left\{ \begin{array}{l@{\quad,\quad}l}
                 -c|x|^\alpha & x < 0\\  d x^\alpha & x > 0
               \end{array} \right. \quad (c, d, \alpha >0).\label{d3}
\end{align}
These solutions can be found in Smirnov \cite{Smirnov} on pp.105-106, who in turn refers to Gnedenko \cite{Gnedenko}.\\
\end{itemize}

\textbf{Case 2}: There exists some integer $k \ge 2$ such that $\alpha_k=1$. Then $(*) \; \delta(x)=\sqrt{k} \delta(x)$ for all $x \in \mathbb{R}.$
Assume that $\delta(x) \in \mathbb{R} \setminus \{0\}$. Then division by $\delta(x)$ in $(*)$ yields $1=\sqrt{k}$, in contradiction to $k \ge 2$.
Thus $\delta(x) \in \{-\infty, 0, \infty\}$. Recall that $\delta(0)=0$ and that $\delta$ is increasing.
It follows that:
\begin{equation}
\textbf{(4)}: \; \delta(x) = \left\{ \begin{array}{l@{\quad,\quad}l}
                 -\infty & x<-c_1\\ 0 & -c_1 < x < c_2\\ \infty & x > c_2
               \end{array} \right. \quad (c_1,c_2 \ge 0, \max\{c_1,c_2\}>0). \label{d4}
\end{equation}
Here, $c_1, c_2 \ge 0$ results from $\delta(0)=0$. Finally, $H=\Phi_\sigma \circ \delta$ by Theorem \ref{thm1} (first part), which is degenerated at zero if $c_1=0=c_2$. This is a contradiction to
our assumption and thus $\max\{c_1,c_2\}>0$.\\

From (\ref{d1})-(\ref{d4}) we immediately obtain the four possible classes (\ref{H1})-(\ref{H4}) by another application of Theorem \ref{thm1} (first part). This completes our proof of Theorem \ref{thm1}.

\section{Domains of attraction of the limits $H$}
Now that we have specified all possible limit distributions, the next step is to develop general conditions on a particular pair $(P,h)$ such that (\ref{dconv}) holds with limit $H$ a distribution function not degenerated at $0$.
In that case $(P,h)$ is said to be in the \emph{domain of attraction of} $H$, symbolically: $(P,h) \in \mathcal{D}(H)$. More specifically, we would like to show how to find constants $a_n$ so that $(m_n-m)/a_n$  converges in distribution, as well as determining the limit itself.

According to (\ref{Dpm}) we have that
\begin{equation} \label{V}
 V(t)= D^+U(t)= \int_{S^l} D^+ h(\textbf{x},t) P^l(d\textbf{x})= \mathbb{E}[D^+ h(\textbf{X},t)], \quad t \in \mathbb{R}.
\end{equation}
Recall that Condition (A1) has the form $V(m)=0=V(m-)$. Notice that here $m$ need not to be uniquely determined. However, if this is the case, then there is an equivalent characterisation:\\

\begin{lemma} \label{unique} Suppose $V(m)=0=V(m-)$. Then $m$ is unique if and only if
\begin{equation} \label{Vu}
 V(x)<0 \quad \forall \; x<m \quad \text{ and } \quad V(x)>0 \quad \forall \; x>m.
\end{equation}
\end{lemma}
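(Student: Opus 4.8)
The plan is to reformulate the uniqueness of $m$ as the assertion $\text{Argmin}(U)=\{m\}$ and then exploit the characterization
$$\text{Argmin}(U)=\{t \in \mathbb{R}: V(t-)\le 0 \le V(t)\}$$
coming from (\ref{AU}), together with the facts that $V=D^+U$ is increasing and right-continuous and that $V(t-)=D^-U(t)$ by Theorem 24.1 in Rockafellar \cite{Rockafellar}. The standing hypothesis $V(m)=0=V(m-)$ already places $m$ in $\text{Argmin}(U)$, so everything reduces to deciding when $m$ is the only element of this set, and both implications will be read off from the sign behaviour of $V$ to the left and right of $m$.

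For the direction ``(\ref{Vu}) $\Rightarrow$ uniqueness'' I would argue directly. Suppose some $t\neq m$ lies in $\text{Argmin}(U)$. If $t<m$, then (\ref{Vu}) gives $V(t)<0$, contradicting the defining inequality $0\le V(t)$. If $t>m$, I would pick any intermediate point $s$ with $m<s<t$; monotonicity of $V$ yields $V(t-)\ge V(s)>0$ by (\ref{Vu}), contradicting $V(t-)\le 0$. Hence no such $t$ exists and $m$ is unique.

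For the converse I would use the contrapositive: assuming (\ref{Vu}) fails, I produce a second minimizer. Since $V$ is increasing and $V(m-)=0$, one has $V(x)\le 0$ for every $x<m$; likewise $V(m)=0$ forces $V(x)\ge 0$ for every $x>m$. Thus a failure of (\ref{Vu}) can only come through equality: either there is $x_0<m$ with $V(x_0)=0$, or there is $x_1>m$ with $V(x_1)=0$. In the first case $V(x_0-)\le V(x_0)=0$ places $x_0 \in \text{Argmin}(U)$; in the second case $V(x_1-)\le V(x_1)=0$ places $x_1 \in \text{Argmin}(U)$. In either event $\text{Argmin}(U)$ contains a point distinct from $m$, so $m$ is not unique.

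The main subtlety is the asymmetry in the Argmin characterization: it is the \emph{left} limit $V(t-)$, not $V(t)$, that must be controlled, so the case $t>m$ cannot be closed by evaluating $V$ at $t$ alone and instead requires the bound $V(t-)\ge V(s)$ at an interior point $s$. Keeping track of strict versus non-strict inequalities (strict in (\ref{Vu}), non-strict in (\ref{AU})) and combining monotonicity with the two one-sided values $V(m-)=V(m)=0$ is where the care lies; the remaining steps are routine.
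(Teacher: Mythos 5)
Your argument is structurally sound, but it silently changes what is being proved, and the change hides the one step that actually needs an argument. In the paper, ``$m$ is unique'' means that $m$ is the only point $t$ satisfying condition (A1), i.e. $V(t)=0=V(t-)$; this is clear from the sentence immediately preceding the lemma, and the paper's own proof of the converse direction accordingly starts from ``there exists $m'\neq m$ with $V(m')=0=V(m'-)$.'' You instead reformulate uniqueness as $\text{Argmin}(U)=\{m\}$. The two notions are indeed equivalent here, but not trivially so: a minimizer only satisfies $V(t-)\le 0\le V(t)$, which is weaker than (A1), so uniqueness of the (A1)-point does not immediately yield uniqueness of the minimizer. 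Since you never prove this equivalence, your converse direction has a gap when read against the paper's statement: from the failure of (\ref{Vu}) you produce $x_0<m$ (or $x_1>m$) with $V(x_0)=0$ and conclude that $x_0\in\text{Argmin}(U)$ is a second minimizer. But $x_0$ need not be a second (A1)-point: nothing rules out a jump of $V$ at $x_0$, i.e. $V(x_0-)<0=V(x_0)$. So ``$m$ is not unique'' has been established in your sense, but not in the paper's sense.

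The repair is short, and it is exactly what the paper's proof does at this point: since $V(x_0)=0=V(m)$ and $V$ is increasing, $V\equiv 0$ on $[x_0,m]$; hence every $z\in(x_0,m)$ satisfies $V(z)=0$ and, because $V$ vanishes on $[x_0,z)$, also $V(z-)=0$. Thus $z$ is a second solution of (A1), contradicting uniqueness. With this insertion your proof is complete. Your first direction needs no repair: every (A1)-point belongs to $\text{Argmin}(U)$ by (\ref{AU}), so your conclusion $\text{Argmin}(U)=\{m\}$ (obtained via the correct intermediate-point estimate $V(t-)\ge V(s)>0$ for $m<s<t$) does imply that $m$ is the unique (A1)-point; in fact it proves slightly more than the lemma asserts, namely that $m$ is also the unique minimizer of $U$.
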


\begin{proof} Assume that $m$ is unique, but there exists some $x<m$ such that $V(x) \ge 0$ or there exists some $y>m$ such that $V(y) \le 0$. In the first case it follows that
$0 \le V(x) \le V(m) =0$, because $V$ is increasing. Thus for (every) $z \in (x,m)$ we know that $V(z)=0$, whence $V(z-)=0$ as well. This means that $z$ satisfies (A1), which contradicts the
uniqueness of $m$. In the second case one can argue in the same way.

Conversely, assume that (\ref{Vu}) holds, but there exists some $m^\prime \neq m$ such that $V(m^\prime)=0=V(m^\prime-)$. However, $V(m^\prime)=0$ immediately contradicts (\ref{Vu}).
\end{proof}

The following theorems give necessary and sufficient conditions for $(P,h) \in \mathcal{D}(H)$, where $H$ runs through the \textbf{classes 1-4}.\\

\begin{theorem} \label{class1} Assume that conditions (A1)-(A4) are satisfied. Then in order that $(P,h) \in \mathcal{D}(H)$ with $H \in $ \textbf{class 1} it is necessary and sufficient that $m$ is unique,
\begin{equation} \label{a1}
\frac{V(m+t)}{V(m-t)} \rightarrow 0, \quad t \downarrow 0,
\end{equation}
and
\begin{equation} \label{b1}
\frac{V(m+\tau t)}{V(m+t)} \rightarrow \tau^\alpha, \quad t \downarrow 0, \quad \forall \; \tau>0.
\end{equation}
A possible choice for the constants in (\ref{dconv}) is $a_n:= V^{-1}(1/\sqrt{n})-m \downarrow 0$.
In this case, $\alpha$ in (\ref{b1}) and $c=1$ are the parameters in (\ref{H1}).
\end{theorem}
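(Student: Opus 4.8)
The plan is to deduce everything from the equivalence $(1)\Leftrightarrow(2)$ in Theorem \ref{thm1}, together with the identity $H=\Phi_\sigma\circ\delta$. Since $\Phi_\sigma$ is a strictly increasing bijection, membership $(P,h)\in\mathcal D(H)$ with $H$ in \textbf{class 1} is, via (\ref{H1}) and (\ref{d1}), exactly equivalent to the existence of positive constants $a_n\downarrow0$ for which $\delta_n(x)=\sqrt n\,V(m+a_nx)$ converges to the profile $\delta(x)=cx^\alpha$ for $x>0$ and $\delta(x)=-\infty$ for $x<0$. Thus the whole theorem reduces to translating the regularity conditions (\ref{a1})--(\ref{b1}) into this one-sided convergence of $\delta_n$, and conversely.

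For \emph{sufficiency}, assume $m$ is unique and (\ref{a1})--(\ref{b1}) hold. By Lemma \ref{unique} uniqueness gives (\ref{Vu}), so $V<0$ to the left of $m$ and $V>0$ to the right; hence the generalized inverse $V^{-1}(1/\sqrt n)$ lies strictly to the right of $m$, whence $a_n=V^{-1}(1/\sqrt n)-m>0$, and since $1/\sqrt n\downarrow0$ with $V(m)=0$ also $a_n\downarrow0$. The first key step is to show $\sqrt n\,V(m+a_n)\to1$: the definition of $V^{-1}$ gives $V(m+a_n)\ge 1/\sqrt n$, while for any $\tau\in(0,1)$ one has $V(m+\tau a_n)<1/\sqrt n$ and (\ref{b1}) yields $V(m+a_n)/V(m+\tau a_n)\to(1/\tau)^\alpha$; letting $\tau\uparrow1$ pins the limit at $1$, so $c=1$. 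With this in hand, for $x>0$,
\[
\delta_n(x)=\frac{V(m+a_nx)}{V(m+a_n)}\,\sqrt n\,V(m+a_n)\longrightarrow x^\alpha\cdot1
\]
by (\ref{b1}) with $t=a_n$, $\tau=x$; and for $x<0$ I use the three-factor decomposition
\[
\delta_n(x)=\frac{V(m-a_n|x|)}{V(m+a_n|x|)}\cdot\frac{V(m+a_n|x|)}{V(m+a_n)}\cdot\sqrt n\,V(m+a_n),
\]
where the middle factor tends to $|x|^\alpha>0$ by (\ref{b1}), the last to $1$, and the first to $-\infty$ because (\ref{a1}) forces $V(m+a_n|x|)/V(m-a_n|x|)\to0$ through negative values. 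Hence $\delta_n\to\delta$ of the form (\ref{d1}) at every $x\neq0$, and Theorem \ref{thm1} delivers (\ref{dconv}) with $H$ in \textbf{class 1}, $c=1$, and the $\alpha$ of (\ref{b1}).

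For \emph{necessity}, suppose $(P,h)\in\mathcal D(H)$ with $H$ in \textbf{class 1}. By Theorem \ref{thm1} there are $a_n\downarrow0$ with $\delta_n(x)\to cx^\alpha$ for $x>0$ and $\delta_n(x)\to-\infty$ for $x<0$. I first argue $m$ is unique: if not, Lemma \ref{unique} fails, so by monotonicity $V\equiv0$ on a nondegenerate interval with endpoint $m$; then for each fixed $x\neq0$ the point $m+a_nx$ eventually lies in that interval, forcing $\delta_n(x)\to0$, contradicting $cx^\alpha>0$ (for $x>0$) and $-\infty$ (for $x<0$). Now uniqueness gives (\ref{Vu}), and putting $x=1$, $x=-1$ shows $\sqrt n\,V(m+a_n)\to c$ and $\sqrt n\,V(m-a_n)\to-\infty$. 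To recover the \emph{continuous} limits (\ref{a1})--(\ref{b1}) from these \emph{sequential} ones I bracket each small $t>0$ by $a_{n+1}<t\le a_n$ with $n=n(t)\to\infty$, and squeeze using the monotonicity of $f(t):=V(m+t)$. For (\ref{b1}),
\[
\frac{f(\tau a_{n+1})}{f(a_n)}\le\frac{f(\tau t)}{f(t)}\le\frac{f(\tau a_n)}{f(a_{n+1})},
\]
and both bounds converge to $\tau^\alpha$ since $f(\tau a_n)/f(a_n)\to\tau^\alpha$ and $f(a_n)/f(a_{n+1})\to1$ (the latter because $\sqrt n\,f(a_n)\to c$); an analogous squeeze, using $\sqrt n\,V(m-a_n)\to-\infty$, drives $V(m+t)/V(m-t)\to0$, which is (\ref{a1}).

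The routine parts are the algebraic decompositions and the sign bookkeeping. The real obstacle is the passage between sequential and continuous limits. In the sufficiency direction this is the assertion $\sqrt n\,V(m+a_n)\to1$, i.e. the inverse-asymptotics $V(V^{-1}(s))\sim s$; I avoid invoking the general theory of regularly varying inverses by squeezing directly with (\ref{b1}) and the definition of $V^{-1}$. In the necessity direction it is the upgrade of convergence along $(a_n)$ to convergence as $t\downarrow0$; the structural fact that makes this squeeze work with no extra hypothesis on the spacing of $(a_n)$ is the monotonicity of $V=D^+U$ together with $\sqrt n\,V(m+a_n)\to c\in(0,\infty)$, which controls the consecutive ratio $f(a_n)/f(a_{n+1})\to1$.
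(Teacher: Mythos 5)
Your proof is correct, and its overall skeleton matches the paper's: reduce everything to Theorem \ref{thm1} and the profile (\ref{d1}), choose $a_n=V^{-1}(1/\sqrt n)-m$ for sufficiency, and bracket $t$ between consecutive $a_n$'s for necessity. The genuine difference lies in the key technical step of sufficiency, namely relating $\sqrt n$ to $1/V(m+a_n)$. The paper works with the two-sided inequality $V(m+a_n-)\le 1/\sqrt n\le V(m+a_n)$, which forces it to evaluate $\lim_n V(m+a_nx)/V(m+a_n-)$; it does so by applying (\ref{b1}) with $\tau=x/(1-\epsilon)$, $t=a_n(1-\epsilon)$, and then invoking P\'olya's theorem to obtain convergence uniform in $\epsilon$ and justify interchanging $\lim_{\epsilon\downarrow0}$ with $\lim_n$. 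You avoid left limits entirely by proving directly that $\sqrt n\,V(m+a_n)\to1$: the generalized-inverse inequalities give $\sqrt n\,V(m+\tau a_n)<1\le\sqrt n\,V(m+a_n)$ for $\tau\in(0,1)$, hence $\sqrt n\,V(m+a_n)<V(m+a_n)/V(m+\tau a_n)\to\tau^{-\alpha}$ by (\ref{b1}) (applied with ratio $1/\tau$ and $t=\tau a_n$), and letting $\tau\uparrow1$ squeezes the limit to $1$. This is more elementary — no rcll bookkeeping, no uniform-convergence/interchange argument — and it turns both the $x>0$ and $x<0$ cases into one-line factorizations (your three-factor identity for $x<0$ replaces the paper's one-sided bound $\delta_n(x)\le V(m+a_nx)/V(m+a_n)$). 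In the necessity direction your argument is essentially the paper's: the paper staggers indices so that each ratio such as $\delta_{n+1}(\tau)/\delta_n(1)$ converges on its own, while you factor through $f(a_n)/f(a_{n+1})\to1$, correctly deduced from $\sqrt n\,f(a_n)\to c$ and $\sqrt{n+1}\,f(a_{n+1})\to c$; these are the same estimate organized differently. One shared loose end: the bracketing $a_{n+1}<t\le a_n$ presupposes that $(a_n)$ is (eventually) decreasing; the paper flags this with an explicit ``w.l.o.g.'' reduction, and your write-up should state that reduction too.
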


\begin{proof} Sufficiency: Recall that $V$ is increasing and right continuous with left limits (rcll). By (\ref{Vu}) the function $V$ is not flat at $m$ and therefore $V^{-1}$ is continuous at $0$ by Lemma 1.18 in Witting \cite{Witting0}. Infer from that continuity and the definition of $(a_n)$ that $a_n \downarrow 0$.
Moreover, since $m+a_n=V^{-1}(1/\sqrt{n})$, it follows that
\begin{equation} \label{Vm0}
V(m+a_n-)=V(V^{-1}(1/\sqrt{n})-) \le \frac{1}{\sqrt{n}} \le V(V^{-1}(1/\sqrt{n}))=V(m+a_n).
\end{equation}
Consequently, we have $\frac{1}{V(m+a_n)} \le \sqrt{n} \le \frac{1}{V(m+a_n-)}$ and therefore
\begin{equation} \label{Vm}
 \frac{V(m+a_nx)}{V(m+a_n)} \le \sqrt{n}V(m+a_n x) \le \frac{V(m+a_nx)}{V(m+a_n-)}.
\end{equation}
Deduce from (\ref{b1}) with $\tau=x >0$ and $t=a_n \downarrow 0$ that
\begin{equation} \label{xa}
 \lim_{n \rightarrow \infty} \frac{V(m+ a_n x)}{V(m+a_n)} = x^\alpha \quad \forall \; x>0.
\end{equation}

Fix some $\epsilon \in [0,1/2]$. Another application of (\ref{b1}) with $\tau = x/(1-\epsilon)$ and $t = a_n(1-\epsilon) \downarrow 0$ yields
\begin{equation} \label{xa2}
 \lim_{n \rightarrow \infty} \frac{V(m+ a_n x)}{V(m+a_n(1-\epsilon))} = \big(\frac{x}{1-\epsilon}\big)^\alpha \quad \forall \; x>0 \quad \forall \; \epsilon \in [0,1/2].
\end{equation}

Put $f_n(\epsilon):= V(m+a_n x)/V(m+a_n(1-\epsilon))$ and $f(\epsilon):= \big(\frac{x}{1-\epsilon}\big)^\alpha$ with $x>0$ fixed. Observe that $f_n$ and $f$ are increasing and
in addition $f$ is continuous. By P\'{o}lya's Theorem the sequence $(f_n)$ converges uniformly on $[0,1/2]$, whence one can interchange the limits, i.e.
$$
 \lim_{\epsilon \downarrow 0} \lim_{n \rightarrow \infty} f_n(\epsilon) = \lim_{n \rightarrow \infty} \lim_{\epsilon \downarrow 0} f_n(\epsilon)=\lim_{n \rightarrow \infty} \frac{V(m+ a_n x)}{V(m+a_n-)}.
$$
Thus by taking the limit $\epsilon \downarrow 0$ in (\ref{xa2}) we arrive at
\begin{equation} \label{xa3}
 \lim_{n \rightarrow \infty} \frac{V(m+ a_n x)}{V(m+a_n-)} = x^\alpha \quad \forall \; x>0.
\end{equation}

From (\ref{Vm}) in combination with (\ref{xa}) and (\ref{xa3}) it follows that
\begin{equation} \label{delta1p}
 \delta_n(x)= \sqrt{n} V(m + a_n x) \rightarrow x^\alpha \quad \forall \; x>0.
\end{equation}

Next, consider $x<0$. Then $V(m+a_n x)<0$ by (\ref{Vu}) and $\sqrt{n} \ge 1/V(m+a_n)$ by (\ref{Vm0}) and therefore
\begin{equation} \label{delta1n}
 \sqrt{n} V(m+a_n x) \le \frac{V(m+a_n x)}{V(m+a_n)}= \frac{V(m+a_n x)}{V(m-a_n x)} \cdot \frac{V(m-a_n x)}{V(m+a_n)}.
\end{equation}
Here, on the right side the first factor is negative for all $n \in \mathbb{N}$ and its inverse converges to zero by condition (\ref{a1}) with $t =-a_n x \downarrow 0$.
Thus the first factor converges to $-\infty$, whereas the second factor converges to $(-x)^\alpha>0$ by condition (\ref{b1}) with $\tau=-x >0$ and $t=a_n \downarrow 0$.
So, the upper bound in (\ref{delta1n}) converges to $-\infty$, which finally shows that
\begin{equation} \label{delta1n2}
\delta_n(x) = \sqrt{n} V(m+a_nx) \rightarrow -\infty \quad \forall \; x<0.
\end{equation}
Infer from (\ref{delta1p}) and (\ref{delta1n2}) that condition (2) of Theorem \ref{thm1} is fulfilled, where $\delta$ is as in (\ref{d1}) with $c=1$ and $D=\mathbb{R} \setminus \{0\}$. Consequently (\ref{dconv}) follows
with $H \in$ \textbf{class 1}.\\

Necessity: By assumption and Theorem \ref{thm1} there exists some sequence $a_n \rightarrow 0$ such that
\begin{equation} \label{ul}
\delta_n(x)=\sqrt{n} V(m+a_nx) \rightarrow \delta(x) \quad \forall \; x \in C_H = \mathbb{R} \setminus \{0\}
\end{equation}
with $\delta$ as in (\ref{d1}).
Assume that $m$ is not unique, i.e. $V(m^\prime)=0$ for some $m^\prime \neq m$. If $m^\prime <m$, then by monotonicity $V(u)=0$ for every $u \in [m^\prime,m]$. It follows that for $x<0$, $m+a_n x \in [m^\prime,m]$
and thus $\delta_n(x)= \sqrt{n} V(m + a_n x)=0$ for eventually all $n \in \mathbb{N}$, in contradiction to $\delta_n(x) \rightarrow \delta(x)=-\infty$. If $m^\prime > m$, then one obtains in the same way that for $x>0$,
$\delta_n(x)=0$ for eventually all $n \in \mathbb{N}$, in contradiction to $\delta_n(x) \rightarrow \delta(x)= c x^\alpha >0$. This shows that $m$ is unique.

For the derivation of (\ref{a1}) consider at first an arbitrary fixed $t>0$. W.l.o.g. we may assume that $(a_n)$ is decreasing because otherwise extract a decreasing subsequence. Then there exists some $n=n(t) \in \mathbb{N}$ such that $a_{n+1} \le t \le a_n$ and therefore
\begin{equation} \label{nt}
V(m+a_{n+1}) \le V(m+t) \le V(m+a_n).
\end{equation}
Notice that $V(m-t)<0< V(m+t)$ by (\ref{Vu}). It follows that
\begin{equation} \label{t}
 \frac{\sqrt{n+1}}{\sqrt{n}} \cdot \frac{\delta_{n}(1)}{\delta_{n+1}(-1)} \le \frac{V(m+t)}{V(m-t)} \le \frac{\sqrt{n}}{\sqrt{n+1}} \cdot \frac{\delta_{n+1}(1)}{\delta_{n}(-1)}.
\end{equation}
Note that $n=n(t) \uparrow \infty$ as $t\downarrow 0$. Clearly the first ratio of the lower bound converges to $1$, whereas by (\ref{ul}) the
numerator of the second ratio converges to $c>0$ and the denominator converges to $-\infty$. Consequently the lower bound converges to zero.
This also applies to the upper bound, whence condition (\ref{a1}) follows.

Moreover, for each $\tau >0$ we obtain
\begin{equation} \label{taut}
\frac{\sqrt{n}}{\sqrt{n+1}} \cdot \frac{\delta_{n+1}(\tau)}{\delta_n(1)} \le \frac{V(m+\tau t)}{V(m+t)} \le \frac{\sqrt{n+1}}{\sqrt{n}} \cdot \frac{\delta_n(\tau)}{\delta_{n+1}(1)}.
\end{equation}
Thus another application of (\ref{ul}) results in (\ref{b1}).
\end{proof}

\begin{theorem} \label{class2} Assume that conditions (A1)-(A4) are satisfied. Then $(P,h) \in \mathcal{D}(H)$ with $H \in $ \textbf{class 2} if and only if $m$ is unique and
\begin{equation} \label{a2}
\frac{V(m-t)}{V(m+t)} \rightarrow 0, \quad t \downarrow 0,
\end{equation}
and
\begin{equation} \label{b2}
\frac{V(m-\tau t)}{V(m-t)} \rightarrow \tau^\alpha, \quad t \downarrow 0, \quad \forall \; \tau>0.
\end{equation}
A possible choice for the constants in (\ref{dconv}) is $a_n:=m-V^{-1}(-1/\sqrt{n}) \downarrow 0$ is.
Then $\alpha$ in (\ref{b2}) is the exponent in (\ref{H2}) and $c=1$ there.
\end{theorem}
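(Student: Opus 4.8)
The plan is to prove Theorem~\ref{class2} as the exact mirror image of Theorem~\ref{class1} under reflection of the parameter about $m$: the finite branch of the limit $\delta$ now lies on $\{x<0\}$ and the divergent branch on $\{x>0\}$, the regular-variation hypotheses (\ref{a2})--(\ref{b2}) are the left-hand analogues of (\ref{a1})--(\ref{b1}), and the normalizing constants $a_n=m-V^{-1}(-1/\sqrt n)$ replace $V^{-1}(1/\sqrt n)-m$. In both directions the goal is to reduce everything to condition (2) of Theorem~\ref{thm1} with $\delta$ of the form (\ref{d2}) and $c=1$, after which the first part of Theorem~\ref{thm1} delivers $(P,h)\in\mathcal{D}(H)$ with $H\in$ \textbf{class 2}. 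Because $V$ is now probed on its negative branch, the whole argument is essentially a sign-tracking exercise built on the same estimates as in Theorem~\ref{class1}.

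For sufficiency I would first invoke Lemma~\ref{unique}: uniqueness of $m$ is equivalent to (\ref{Vu}), so $V<0$ strictly left of $m$ and $V>0$ strictly right of it; in particular $V$ is not flat at $m$, whence $V^{-1}$ is continuous at $0$ and $a_n=m-V^{-1}(-1/\sqrt n)\downarrow 0$. Evaluating the generalized-inverse inequalities (the analogue of (\ref{Vm0})) at $u=-1/\sqrt n$ gives $V((m-a_n)-)\le -1/\sqrt n\le V(m-a_n)<0$, and taking reciprocals of negative quantities yields $-1/V((m-a_n)-)\le\sqrt n\le -1/V(m-a_n)$. On the finite branch $x<0$ one has $V(m+a_nx)<0$; multiplying this sandwich by that negative factor (which reverses the inequalities) squeezes $\delta_n(x)$ between $-V(m+a_nx)/V(m-a_n)$ and $-V(m+a_nx)/V((m-a_n)-)$. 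Writing $m+a_nx=m-a_n|x|$, the lower bound is $-V(m-a_n|x|)/V(m-a_n)\to -|x|^\alpha$ directly from (\ref{b2}) with $\tau=|x|$, $t=a_n$, while the upper bound carries the left limit $V((m-a_n)-)$ and is handled by the left-limit refinement of (\ref{b2}) obtained exactly as in (\ref{xa2})--(\ref{xa3}), i.e.\ by applying (\ref{b2}) with $\tau=|x|/(1-\epsilon)$, $t=a_n(1-\epsilon)$ and interchanging the limits $\epsilon\downarrow 0$ and $n\to\infty$ through P\'{o}lya's theorem. Hence $\delta_n(x)\to -|x|^\alpha$ for every $x<0$.

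On the divergent branch $x>0$ one has $V(m+a_nx)>0$, and the lower reciprocal bound gives $\delta_n(x)\ge V(m+a_nx)/(-V((m-a_n)-))$, which I would factor as
$$
\frac{V(m+a_nx)}{-V(m-a_nx)}\cdot\frac{V(m-a_nx)}{V((m-a_n)-)} .
$$
The first factor tends to $+\infty$ by (\ref{a2}) (with $t=a_nx$, since $V(m-t)<0<V(m+t)$), and the second tends to $x^\alpha>0$ by the same left-limit refinement of (\ref{b2}); thus $\delta_n(x)\to+\infty$. Together with the finite branch this is precisely $\delta$ as in (\ref{d2}) with $c=1$ on $D=\mathbb{R}\setminus\{0\}$, so condition (2) of Theorem~\ref{thm1} holds and its first part yields (\ref{dconv}) with $H\in$ \textbf{class 2}, exponent $\alpha$ and $c=1$. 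For necessity I would start from $\delta_n(x)=\sqrt n\,V(m+a_nx)\to\delta(x)$ on $C_H=\mathbb{R}\setminus\{0\}$ with $\delta$ as in (\ref{d2}). Uniqueness follows as in Theorem~\ref{class1}: a second zero $m'<m$ would force $\delta_n(x)=0$ eventually for $x<0$, contradicting $\delta(x)=-c|x|^\alpha\neq 0$, while $m'>m$ would force $\delta_n(x)=0$ eventually for $x>0$, contradicting $\delta(x)=+\infty$. For (\ref{a2}) and (\ref{b2}) I would reproduce the reflected sandwich of Theorem~\ref{class1}: assuming (w.l.o.g., via a subsequence) that $(a_n)$ decreases, choose $n=n(t)$ with $a_{n+1}\le t\le a_n$, use monotonicity of $V$ to place $V(m-t),V(m+t),V(m-\tau t)$ between consecutive values $V(m\mp a_n)$, divide, and express each term through $\delta_n(\pm1)$ and $\delta_n(\pm\tau)$; letting $t\downarrow 0$ (so $n(t)\to\infty$) and using that $\delta(1)=+\infty$ while $\delta(-1)$ is finite and negative squeezes $V(m-t)/V(m+t)$ to $0$, giving (\ref{a2}), and using $\delta(-\tau)/\delta(-1)=\tau^\alpha$ squeezes $V(m-\tau t)/V(m-t)$ to $\tau^\alpha$, giving (\ref{b2}).

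The main obstacle I anticipate is not conceptual but one of bookkeeping: because $a_n$ is now anchored on the \emph{negative} branch of $V$, the ``clean'' reciprocal bound (the one without a left limit) falls on the side opposite to Theorem~\ref{class1}, so the divergent-branch estimate for $x>0$ is forced to use the left limit $V((m-a_n)-)$. Consequently the P\'{o}lya/uniform-convergence interchange producing the left-limit refinement of (\ref{b2}) — mirroring (\ref{xa2})--(\ref{xa3}) — is needed on \emph{both} branches here rather than on one, and every reciprocal and product must be checked for sign, since $V$ is negative on the branch that drives the normalization.
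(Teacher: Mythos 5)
Your proposal is correct and is precisely what the paper intends: the paper's own proof of Theorem \ref{class2} is the single line ``Analogously to the above proof,'' i.e.\ the mirror-image of the proof of Theorem \ref{class1}, which you have carried out in full with the signs, reciprocals, and the P\'{o}lya left-limit refinement all handled correctly. Your observation that the left-limit refinement of (\ref{b2}) is needed on both branches (since the normalization is now anchored on the negative branch of $V$) is a genuine asymmetry with the class-1 argument that the paper's one-line proof leaves implicit, and your treatment of it is sound.
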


\begin{proof} Analogously to the above proof.
\end{proof}

\begin{theorem} \label{class3} Assume that conditions (A1)-(A4) are satisfied. Then $(P,h) \in \mathcal{D}(H)$ with $H \in $ \textbf{class 3} if and only if $m$ is unique and
\begin{equation} \label{a3}
\frac{V(m+t)}{V(m-t)} \rightarrow A<0, \quad t \downarrow 0,
\end{equation}
and
\begin{equation} \label{b3}
\frac{V(m+\tau t)}{V(m+t)} \rightarrow \tau^\alpha, \quad t \downarrow 0, \quad \forall \; \tau>0.
\end{equation}
A possible choice for the constants in (\ref{dconv}) is $a_n:= V^{-1}(1/\sqrt{n})-m \downarrow 0$.
Moreover, $\alpha$ in (\ref{b3}) is the exponent in (\ref{H3}) and the constants there are $c=-1/A>0$ with $A$ from (\ref{a3}) and $d=1$.
\end{theorem}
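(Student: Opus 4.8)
The plan is to follow the proof of Theorem~\ref{class1} almost verbatim, exploiting that the condition (\ref{b3}) governing the positive side is identical to (\ref{b1}) and that (\ref{H3}) agrees with (\ref{H1}) for $x>0$ once we set $d=1$. The only genuinely new feature of \textbf{class 3} is that, on the negative half-line, $\delta$ takes a finite value $-c|x|^{\alpha}$ rather than $-\infty$; accordingly (\ref{a3}) replaces (\ref{a1}), prescribing a finite negative ratio $A$ instead of $0$.

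For sufficiency I would again put $a_n:=V^{-1}(1/\sqrt n)-m$, invoke Lemma~\ref{unique} together with the continuity of $V^{-1}$ at $0$ to get $a_n\downarrow 0$, and reproduce the sandwich (\ref{Vm0}). The entire positive-side analysis, from (\ref{xa}) through (\ref{xa3}) and (\ref{delta1p}), carries over unchanged and yields $\delta_n(x)\to x^{\alpha}$ for $x>0$ (so $d=1$), together with the by-product (\ref{xa3}) at $x=1$, namely $V(m+a_n)/V(m+a_n-)\to 1$. For $x<0$ I would multiply (\ref{Vm0}) by the negative quantity $V(m+a_nx)$ to obtain the reversed sandwich
\begin{equation*}
\frac{V(m+a_nx)}{V(m+a_n-)}\le \sqrt n\,V(m+a_nx)\le \frac{V(m+a_nx)}{V(m+a_n)},
\end{equation*}
and then factor the upper bound as
\begin{equation*}
\frac{V(m-a_n|x|)}{V(m+a_n)}=\frac{V(m-a_n|x|)}{V(m+a_n|x|)}\cdot\frac{V(m+a_n|x|)}{V(m+a_n)}\longrightarrow \frac1A\,|x|^{\alpha}=-c\,|x|^{\alpha},
\end{equation*}
using (\ref{a3}) with $t=a_n|x|$ for the first factor and (\ref{b3}) with $\tau=|x|$, $t=a_n$ for the second, and recalling $c=-1/A$. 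Multiplying by $V(m+a_n)/V(m+a_n-)\to 1$ shows the lower bound has the same limit, so $\delta_n(x)\to -c|x|^{\alpha}$. This realises $\delta$ as in (\ref{d3}), and Theorem~\ref{thm1} delivers (\ref{dconv}) with $H\in$ \textbf{class 3}.

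For necessity I would start from Theorem~\ref{thm1}, which supplies $a_n\to 0$ with $\delta_n(x)\to\delta(x)$ as in (\ref{d3}) on $C_H=\mathbb R\setminus\{0\}$. Uniqueness of $m$ follows exactly as in \textbf{class 1}: a second zero of $V$ would force $V$ to be flat on one side of $m$, making $\delta_n(x)$ vanish eventually for some $x$ of the appropriate sign, which is incompatible with the finite nonzero limits $-c|x|^{\alpha}$ or $dx^{\alpha}$. Condition (\ref{b3}) is obtained from the bounds (\ref{taut}) just as (\ref{b1}) was, now with $\delta_n(1)\to d$ and $\delta_n(\tau)\to d\tau^{\alpha}$, so both sides tend to $\tau^{\alpha}$. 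Condition (\ref{a3}) comes from (\ref{t}): since here $\delta_n(1)\to d$ and $\delta_n(-1)\to -c$, the two bounding ratios now converge to the finite value $-d/c<0$, so $V(m+t)/V(m-t)\to A:=-d/c$; with the canonical normalisation $d=1$ this is $c=-1/A$, as claimed.

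I expect the only delicate point to be the lower sandwich bound on the negative side in the sufficiency part, where the passage from $V(m+a_n)$ to the left limit $V(m+a_n-)$ must be controlled. This does not follow from the sandwich (\ref{Vm0}) alone (which only gives $V(m+a_n)/V(m+a_n-)\ge 1$); it is exactly the P\'olya/uniform-convergence argument on the positive side, culminating in (\ref{xa3}) evaluated at $x=1$, that supplies the missing limit $V(m+a_n)/V(m+a_n-)\to 1$. Everything else is a sign-aware reprise of Theorem~\ref{class1}.
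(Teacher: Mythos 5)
Your proposal is correct and follows essentially the same route as the paper: sufficiency recycles the positive-side analysis of Theorem \ref{class1} with $a_n=V^{-1}(1/\sqrt{n})-m$, and necessity obtains uniqueness and conditions (\ref{a3}), (\ref{b3}) from the bounds (\ref{t}) and (\ref{taut}) exactly as the paper does. The only point of divergence is the negative-side sufficiency step: the paper bypasses the sandwich (and hence the left-limit issue you flag as delicate) by writing, for $x<0$, the identity $\delta_n(x)=\sqrt{n}\,V(m+a_n(-x))\cdot\bigl(V(m+a_n(-x))/V(m+a_nx)\bigr)^{-1}$, so that the already-established limit $\sqrt{n}\,V(m+a_n(-x))\to(-x)^\alpha$ together with (\ref{a3}) (applied with $t=a_n|x|\downarrow 0$) immediately gives $\delta_n(x)\to |x|^\alpha/A=-c|x|^\alpha$; your alternative via the reversed sandwich, (\ref{Vm0}) and the limit $V(m+a_n)/V(m+a_n-)\to 1$ extracted from (\ref{xa3}) at $x=1$, is valid but slightly longer.
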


\begin{proof} If-part: As in the proof of Theorem \ref{class1} one shows that $\delta_n(x)=\sqrt{n} V(m+a_n x) \rightarrow x^\alpha$ for
all $x>0$. Consider $x<0$. Then
$$
 \sqrt{n} V(m+a_n x)= \sqrt{n} V(m+a_n (-x)) \Big(\frac{V(m+a_n(-x))}{V(m+a_n x)}\Big)^{-1}.
$$
Here the first factor converges to $(-x)^\alpha$ and the second factor to $A^{-1}$ by condition (\ref{a3}).

We obtain that
$$
\delta_n(x) \rightarrow \delta(x) = \left\{ \begin{array}{l@{\quad,\quad}l}
                 -c |x|^\alpha & x<0\\  x^\alpha & x > 0,
               \end{array} \right.
$$
where $c=-1/A >0$. Thus Theorem \ref{thm1} ensures that $(P,h) \in \mathcal{D}(H)$ with $H \in $ \textbf{class 3}.

Only-if-part: By Theorem \ref{thm1} it follows that $$
\delta_n(x) \rightarrow \delta(x) = \left\{ \begin{array}{l@{\quad,\quad}l}
                 -c |x|^\alpha & x<0\\  d x^\alpha & x > 0,
               \end{array} \right.
$$
where $c$ and $d$ are positive constants. Therefore uniqueness of $m$ can be shown  as in the proof of Theorem \ref{class1} upon noticing that $\delta(x) \neq 0$ for all $x \neq 0$. Moreover, conditions (\ref{a3}) with $A=-d/c<0$ and (\ref{b3}) follow from (\ref{t}) and (\ref{taut}).
\end{proof}

\begin{theorem} \label{class4} Suppose that (A1)-(A4) hold with $m$ being unique. Then $(P,h) \in \mathcal{D}(H)$, where $H \in $ \textbf{class 4} if and only if
\begin{equation} \label{deltastar}
 \sqrt{n} V(a_n^* x+m)  \rightarrow  \delta(x)=\left\{ \begin{array}{l@{\quad,\quad}l}
                 -\infty & x<-c_1\\  0 & -c_1<x<c_2\\ \infty & x>c_2.
               \end{array} \right.
\end{equation}
Here, $c_1, c_2 \ge 0$ with $\max\{c_1,c_2\}>0$ and
$$
 a_n^*=\big(V^{-1}(1/\sqrt{n})-V^{-1}(-1/\sqrt{n})\big)/(c_1+c_2) \downarrow 0.
$$
\end{theorem}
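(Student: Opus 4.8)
The plan is to prove the two implications separately. Throughout I abbreviate $b_n^+:=V^{-1}(1/\sqrt n)-m$ and $b_n^-:=m-V^{-1}(-1/\sqrt n)$, so that $a_n^*=(b_n^++b_n^-)/(c_1+c_2)$, the denominator being positive since $\max\{c_1,c_2\}>0$. Because $m$ is unique, Lemma \ref{unique} gives $V(x)<0$ for $x<m$ and $V(x)>0$ for $x>m$, so $V$ is not flat at $m$ and $V^{-1}(0)=m$; exactly as in the proof of Theorem \ref{class1} (Lemma 1.18 in Witting \cite{Witting0}), $V^{-1}$ is continuous at $0$, and since $V^{-1}$ is increasing while $1/\sqrt n\downarrow 0$ and $-1/\sqrt n\uparrow 0$, both $b_n^+\downarrow 0$ and $b_n^-\downarrow 0$. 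Hence $a_n^*\downarrow 0$, as asserted.

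For sufficiency I would read the conclusion straight off Theorem \ref{thm1}. Assuming (\ref{deltastar}), the convergence $\sqrt n V(a_n^* x+m)\to\delta(x)$ holds on the dense set $D=\mathbb R\setminus\{-c_1,c_2\}$, so condition (2) of Theorem \ref{thm1} is satisfied with the sequence $a_n^*\downarrow 0$ just described. Theorem \ref{thm1} then yields $H_n(a_n^* x+m)\to\Phi_\sigma(\delta(x))$ on $C_H$, and evaluating $\Phi_\sigma(-\infty)=0$, $\Phi_\sigma(0)=1/2$, $\Phi_\sigma(\infty)=1$ identifies the limit as precisely the \textbf{class 4} function (\ref{H4}) with parameters $c_1,c_2$. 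Thus $(P,h)\in\mathcal D(H)$ with $H\in$ \textbf{class 4}.

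For necessity, Theorem \ref{thm1} first furnishes a sequence $\tilde a_n\to 0$ of positive constants with $\delta_n(x)=\sqrt n V(m+\tilde a_n x)\to\delta(x)$ for every $x\neq -c_1,c_2$, where $\delta=\Phi_\sigma^{-1}\circ H$ is the \textbf{class 4} function (\ref{d4}) carrying the same $c_1,c_2$ as $H$. The crux is to show $a_n^*/\tilde a_n\to 1$, for which I would use the Galois relation $V^{-1}(p)\le t\iff p\le V(t)$ valid for the rcll function $V$. For $0<x<c_2$ the limit $\sqrt n V(m+\tilde a_n x)\to 0$ forces $V(m+\tilde a_n x)<1/\sqrt n$ eventually, hence $b_n^+>\tilde a_n x$ and $\liminf_n b_n^+/\tilde a_n\ge c_2$; for $x>c_2$ the limit $+\infty$ forces $V(m+\tilde a_n x)\ge 1/\sqrt n$ eventually, hence $b_n^+\le\tilde a_n x$ and $\limsup_n b_n^+/\tilde a_n\le c_2$. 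Thus $b_n^+/\tilde a_n\to c_2$, and the mirror argument on the left, now comparing $V(m+\tilde a_n x)$ with $-1/\sqrt n$, gives $b_n^-/\tilde a_n\to c_1$; the reasoning degenerates harmlessly when one of $c_1,c_2$ vanishes, the other being positive. Consequently $a_n^*/\tilde a_n=(b_n^++b_n^-)/\big((c_1+c_2)\tilde a_n\big)\to 1$.

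Finally I would transfer the convergence from $\tilde a_n$ to $a_n^*$. For fixed $x\neq -c_1,c_2$ the function $\delta$ in (\ref{d4}) is constant on whichever of the intervals $(-\infty,-c_1)$, $(-c_1,c_2)$, $(c_2,\infty)$ contains $x$, so I can pick $y_1<x<y_2$ in that interval with $\delta(y_1)=\delta(y_2)=\delta(x)$. Since $a_n^* x/\tilde a_n\to x$, eventually $\tilde a_n y_1<a_n^* x<\tilde a_n y_2$, and monotonicity of $V$ gives $\sqrt n V(m+\tilde a_n y_1)\le\sqrt n V(m+a_n^* x)\le\sqrt n V(m+\tilde a_n y_2)$; letting $n\to\infty$, both outer terms tend to $\delta(x)$ (a single one-sided bound already forcing the value in the $\pm\infty$ cases), which is exactly (\ref{deltastar}). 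The main obstacle is the pair of one-sided generalized-inverse estimates yielding $b_n^+/\tilde a_n\to c_2$ and $b_n^-/\tilde a_n\to c_1$: these demand care with the inverse conventions at the threshold arguments $x=c_2$ and $x=-c_1$ and with the boundary cases $c_1=0$ or $c_2=0$.
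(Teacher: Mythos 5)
Your proof is correct, and its skeleton coincides with the paper's: both directions run through Theorem \ref{thm1}, the auxiliary sequence $\tilde a_n$ furnished by that theorem is pinned to $a_n^*$ by comparing $\sqrt{n}\,V(m+\tilde a_n x)$ with $\pm 1$ at points on either side of $c_2$ and $-c_1$ (your ``Galois'' inequalities are exactly the paper's generalized-inverse manipulations, cf.\ (\ref{Vm0})), and the convergence is then carried over to $a_n^*$ via monotonicity of $V$. The genuine differences lie in the transfer step and the bookkeeping. The paper packages the ratio information as $\tilde a_n=a_n^*(1+\theta_n)$ with $\theta_n\to 0$, obtained by summing the bounds for $\alpha_n$ ($=b_n^+$) and $\beta_n$ ($=b_n^-$), a device that absorbs the boundary cases $c_1=0$ or $c_2=0$ automatically; it then proves the two-sided equivalence of (\ref{delta4a}) and (\ref{deltastar}), invoking P\'olya's theorem to upgrade pointwise to locally uniform convergence on compact subsets of $(-c_1,c_2)$, so that $\delta_n^*(x_n)\to\delta(x)$ along the perturbed arguments $x_n=(1+\theta_n)x$. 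You instead establish the individual limits $b_n^+/\tilde a_n\to c_2$ and $b_n^-/\tilde a_n\to c_1$ (your liminf/limsup formulation is the paper's $\epsilon$-bounds in different clothing, and the degenerate cases are indeed harmless because $b_n^\pm\ge 0$ always), and you replace P\'olya's theorem by the elementary sandwich $\delta_n(y_1)\le\sqrt{n}\,V(m+a_n^*x)\le\delta_n(y_2)$ with $y_1<x<y_2$ chosen in the constancy interval of $\delta$ containing $x$; this exploits the piecewise-constant shape (\ref{d4}) rather than mere continuity of the limit and needs no uniform-convergence input (outside $[-c_1,c_2]$ both proofs use the identical one-sided comparison). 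You also streamline the logic: sufficiency is read off Theorem \ref{thm1} directly with $a_n=a_n^*$, so only one direction of transfer is ever needed, whereas the paper's route through the full equivalence of (\ref{delta4a}) and (\ref{deltastar}) proves more than the theorem requires. What the paper's argument buys in return is robustness: the P\'olya step works for any perturbation $x_n\to x$ and any limit continuous on the middle interval, while your sandwich is tailored to the class-4 structure --- which is all that is needed here.
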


\begin{proof} From the proof of Theorem \ref{class1} we already know that uniqueness of $m$ entails continuity of $V^{-1}$ at $0$. Thus $a_n^*\downarrow 0$.
Further, by Theorem \ref{thm1} the relation $(P,h) \in \mathcal{D}(H)$ with $H \in $ \textbf{class 4} is true if and only if there is some sequence $a_n \rightarrow 0$ such that
\begin{equation} \label{delta4}
 \sqrt{n} V(a_n x+m)  \rightarrow \delta(x).
\end{equation}

Let $x=c_2+\epsilon$ with $\epsilon \in (0,\infty)$. By (\ref{delta4}) there exists some $n_0 = n_0(\epsilon) \in \mathbb{N}$ such that
$\sqrt{n} V(a_n x+m) \ge 1$ and therefore  $V(a_n x+m) \ge 1/\sqrt{n}$ for all $n \ge n_0$. For these $n$ it follows that $a_n x +m \ge V^{-1}(1/\sqrt{n})$, by which
we arrive at $a_n(c_2+\epsilon) \ge V^{-1}(1/\sqrt{n})-m =: \alpha_n$ for all $n \ge n_0$.

Let $x=-c_1+\epsilon$ with $\epsilon \in (0,c_1+c_2)$. Using (\ref{delta4}) we find an integer $n_1=n_1(\epsilon)$ such that $|\sqrt{n} V(a_n x+m)| \le 1$, whence
$V(a_n x+m) \ge -1/\sqrt{n}$ for every $n \ge n_1$. As a consequence one obtain the following equivalent relations:
$a_n x+m \ge V^{-1}(-1/\sqrt{n}) \Leftrightarrow a_n x\ge -m + V^{-1}(-1/\sqrt{n}) \Leftrightarrow a_n(c_1-\epsilon) \le m-V^{-1}(-1/\sqrt{n}) =:\beta_n.$

If $x=c_2-\epsilon, \epsilon \in (0,c_1+c_2)$, then there exists a $n_2 \in \mathbb{N}$ such that for all $n \ge n_2$ the following implications hold:

$|\sqrt{n} V(a_n x+m)| \le 1/2 \Rightarrow  V(a_n x+m) \le \frac{1}{2} \frac{1}{\sqrt{n}} < \frac{1}{\sqrt{n}} \Rightarrow a_n x+m < V^{-1}(1/\sqrt{n})
\Rightarrow a_n(c_2-\epsilon) < \alpha_n.$

Similarly, one treats the case $x=-c_1-\epsilon, \epsilon \in (0,\infty)$. To summarize, we obtain that for each $\epsilon \in (0,c_1+c_2)$ there is some
$N=N(\epsilon) \in \mathbb{N}$ such that
$$
 a_n(c_2-\epsilon)<\alpha_n \le a_n(c_2+\epsilon), \quad a_n(c_1-\epsilon) \le \beta_n < a_n(c_1+\epsilon) \quad \forall \; n \ge N.
$$
If you add both inequalities and then divide by $2 a_n$, the result with $d:=(c_1+c_2)/2$ is
\begin{equation} \label{sum}
 d-\epsilon \le \frac{\alpha_n+\beta_n}{2 a_n} \le d+\epsilon \quad \forall \; n \ge N.
\end{equation}
Define $$\theta_n:=\frac{d}{\frac{\alpha_n+\beta_n}{2 a_n}}-1.$$
Then on the one hand it follows from (\ref{sum}) that $-\frac{\epsilon}{d+\epsilon} \le \theta_n \le \frac{\epsilon}{d-\epsilon}$, whence
$\theta_n \rightarrow 0$.
On the other hand a simple rearrangement of the formula for $\theta_n$ gives $$a_n=\frac{\alpha_n+\beta_n}{2 d}(1+\theta_n)=a_n^*(1+\theta_n).$$
Thus (\ref{delta4}) is the same as
\begin{equation} \label{delta4a}
 \sqrt{n} V(a_n^*(1+\theta_n) x+m)  \rightarrow \delta(x).
\end{equation}

Our proof is complete, once we have shown that (\ref{delta4a}) is equivalent to (\ref{deltastar}). To do this, introduce $\delta_n^*(x):=\sqrt{n} V(a_n^* x+m)$ and  $\delta_n^{**}(x):=\sqrt{n} V(a_n^*(1+\theta_n) x+m).$
Assume that (\ref{deltastar}) holds. Consider $x \in (-c_1,c_2)$. By assumption $\delta_n^*(x) \rightarrow \delta(x)=0$. Observe that $\delta_n^*$ and $\delta$ both are increasing
and that $\delta$ is continuous. Therefore P\'{o}lya's theorem ensures that $\delta_n^*$ converges to $\delta$ uniformly on every compact set $K \subseteq (-c_1,c_2)$.
As a consequence $\delta_n^*(x_n) \rightarrow \delta(x)$ for every sequence $(x_n)$ with $x_n \rightarrow x$. Since $x_n:=(1+\theta_n) x \rightarrow x$, we receive that
$\delta_n^{**}(x)=\delta_n^*(x_n) \rightarrow \delta(x).$ Let $x>c_2$. For some $z \in (c_2,x)$ we have that $x_n \ge z$ for eventually all $n \in \mathbb{N}$. Conclude
from $\delta_n^{**}(x)=\delta_n^*(x_n) \ge \delta_n^*(z) \rightarrow \infty$ that $\delta_n^{**}(x) \rightarrow \infty.$ Similarly, one obtains for $x<-c_1$ that
$\delta_n^{**}(x) \rightarrow -\infty.$ This shows that (\ref{delta4a}) is fulfilled.
For the reverse conclusion notice that $\delta_n^*(x)=\delta_n^{**}(y_n)$ with $y_n=x/(1+\theta_n) \rightarrow x$, so that we can proceed as above.
\end{proof}

\section{Extension to general minimizers of $U_n$}
So far we only considered the smallest minimizing point $m_n$ of $U_n$. Let $m_n^*$ be the largest minimizing point of $U_n$, which by Corollary 3.3 in Ferger \cite{Ferger1} is a real random variable. If $H_n^*(x):=\mathbb{P}(m_n^* <x)$, then
$$
 H_n^*(a_n x+m) = \mathbb{P}(m_n^* < m+a_n x) = \mathbb{P}(D^-U_n(m+a_nx) >0),
$$
where the second equality holds by Theorem 1.1 of Ferger \cite{Ferger1}. (Notice that $H_n^*$ is left continuous with right limits.)
From here on, we can proceed in the same way as in the proof of Theorem \ref{thm1} (first part), replacing $D^+h$ with $D^-h$ in assumptions (A2)-(A4). More precisely, the assumptions are now:

\begin{itemize}
\item[(B0)] = (A0).
\item[(B1)] = (A1).
\item[(B2)] $\mathbb{E}[D^-h(X_1,\ldots,X_l,m)^2] < \infty$.
\item[(B3)] Put $$\zeta^-:= \int_S [\int_{S^{l-1}} D^-h(x_1,x_2,\ldots,x_l,m) d P^{l-1}(x_2,\ldots,x_l)]^2 d P(x_1)$$ for $l\ge 2,$  and
$\zeta^-:=\int_S D^-h(x,m)^2 P(dx)$ for $l=1$. We require that $\zeta^->0$. (Finiteness of $\zeta^-$ follows from (B2).)
\item[(B4)] There exist points $s_0 <0 <s_1$ such that $$\mathbb{E}[\big(D^-h(X_1,\ldots,X_l,m+t)-D^-h(X_1,\ldots,X_l,m)\big)^2] < \infty$$ for $t \in \{s_0,s_1\}$.
\end{itemize}

In complete analogy to the proof of Theorem 1 (part one), we obtain the following:\\

\textbf{Interim Conclusion} Assume that $m$ is a real number such that (B0)-(B4) hold. Then the following statements (1) and (2) are equivalent:
\begin{itemize}
\item[(1)]
\begin{equation} \label{dconv*}
       H_n^*(a_n x+m) \rightarrow H^*(x) \quad \forall \; x \in C_{H^*},
\end{equation}
where $H^*$ is a sub distribution function.
\item[(2)]
\begin{equation} \label{nscond*}
\delta_n^*(x):= \sqrt{n} D^-U(m+ a_n x) \rightarrow \delta^*(x) \in [-\infty,\infty] \quad \forall \; x \in D^*,
\end{equation}
where $D^*$ is some dense subset of $\mathbb{R}$.
\end{itemize}
In both cases, $H^*(x)= \Phi_{\sigma^*}(\delta^*(x))$ and $D^*=C_{H^*}$.
Moreover, ${\sigma^*}^2=l^2 \zeta^-$.\\

Now that we have looked at the smallest and largest minimizer of $U_n$, let's look in general at arbitrary measurable minimizer $\tilde{m}_n \in \text{Argmin}(U_n)$ a.s.
By (\ref{AUn}) every such minimizer is completely characterized by the inequalities
\begin{equation} \label{zeroestim}
 \int_{S^l} D^- h(\textbf{x},\tilde{m}_n) P_n(d\textbf{x}) \le 0 \le \int_{S^l} D^+ h(\textbf{x},\tilde{m}_n) P_n(d\textbf{x}) \;\; \text{a.s.}
\end{equation}

\vspace{0.4cm}
\begin{theorem} Let $\tilde{m}_n$ be a real random variable, which minimizes $U_n$ a.s.
Then all results from section 1 apply verbatim for $\tilde{m}_n$ and its distribution function $\tilde{H}_n(x):=\mathbb{P}(\tilde{m}_n \le x), x \in \mathbb{R}$.
\end{theorem}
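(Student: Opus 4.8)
The plan is to exploit that an arbitrary a.s.\ minimizer is squeezed between the smallest and the largest minimizer, and to transfer the asymptotics already established for these two extremes. Since by (\ref{AUn}) the set $\text{Argmin}(U_n)$ is a.s.\ a non-empty compact interval with endpoints $m_n$ (smallest) and $m_n^*$ (largest), every measurable minimizer satisfies $m_n \le \tilde{m}_n \le m_n^*$ a.s. Passing to distribution functions, the inclusions $\{m_n^* \le y\} \subseteq \{\tilde{m}_n \le y\} \subseteq \{m_n \le y\}$ together with $H_n^*(y)=\mathbb{P}(m_n^*<y)\le \mathbb{P}(m_n^*\le y)$ yield the sandwich
\begin{equation*}
 H_n^*(y) \le \tilde{H}_n(y) \le H_n(y) \quad \forall\, y \in \mathbb{R}.
\end{equation*}

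The conceptual key is that both extremes obey the \emph{same} limit theory. First I would record that, under (A1), the argument establishing $Z=D^-h(\textbf{X},m)-D^+h(\textbf{X},m)=0$ $\mathbb{P}$-a.s.\ in the proof of Theorem \ref{thm1} (the treatment of $N_n(x)$ for $x<0$) already gives $D^-h(\textbf{X},m)=D^+h(\textbf{X},m)$ a.s. Hence $\zeta^-=\zeta$ and ${\sigma^*}^2=l^2\zeta^-=\sigma^2$, while (B2) and (B3) are exactly (A2), (A3); condition (B4) follows from (A4) by dominating the $D^-h$-increments with the corresponding $D^+h$-increments via monotonicity of the one-sided derivatives (adjusting $s_0,s_1$ if needed). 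Thus the \textbf{Interim Conclusion} applies with the same $\sigma$. Next, since $V$ is increasing and $D^-U(t)=V(t-)$, writing $\delta_n^*(x)=\sqrt{n}\,V((m+a_nx)-)$ one has for $x'<x<x''$ the chain
\begin{equation*}
 \delta_n(x') \le \delta_n^*(x) \le \delta_n(x) \le \delta_n^*(x'') \le \delta_n(x'').
\end{equation*}
Letting $x'\uparrow x$ through the dense set on which $\delta_n\to\delta$ forces $\delta^*=\delta$ at every continuity point of $\delta$, so $m_n$ and $m_n^*$ share the common limit $H=\Phi_\sigma\circ\delta=\Phi_{\sigma^*}\circ\delta^*$.

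With this in hand the forward implication (2)$\Rightarrow$(1) is immediate, because condition (2) is intrinsic to $V$ and $(a_n)$ and mentions no minimizer: it delivers $H_n(a_nx+m)\to H(x)$ on $C_H$ by Theorem \ref{thm1}, and, by the previous paragraph, $H_n^*(a_nx+m)\to H(x)$ on a dense set of continuity points. The sandwich then gives $\tilde{H}_n(a_nx+m)\to H(x)$ on a dense set, and since $x\mapsto\tilde{H}_n(a_nx+m)$ and $H$ are increasing, the convergence extends to all of $C_H$ by Lemma 5.74 in Witting and M\"{u}ller-Funk \cite{Witting}, exactly as in the first part of Theorem \ref{thm1}.

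The main obstacle is the converse (1)$\Rightarrow$(2), since the sandwich points the wrong way: knowing that the middle term converges does not by itself control the two bounds. Here I would replace the distribution functions by their Gaussian surrogates through Theorem \ref{thm2} and its starred analog, writing $H_n(a_nx+m)=\Phi_\sigma(\delta_n(x))+o(1)$ and $H_n^*(a_nx+m)=\Phi_\sigma(\delta_n^*(x))+o(1)$. Combining these with the sandwich and the monotonicity chain $\delta_n(x)\le\delta_n^*(x'')$, for $x<x''$ one obtains
\begin{equation*}
 \tilde{H}_n(a_nx+m)+o(1) \le \Phi_\sigma(\delta_n(x)) \le \tilde{H}_n(a_nx''+m)+o(1).
\end{equation*}
Assuming $\tilde{H}_n(a_n\cdot+m)\to\tilde{H}$ and letting $x''\downarrow x$ through $C_{\tilde{H}}$ pins $\Phi_\sigma(\delta_n(x))\to\tilde{H}(x)$, that is $\delta_n(x)\to\Phi_\sigma^{-1}(\tilde{H}(x))=:\delta(x)$ on the dense set $C_{\tilde{H}}$, which is precisely condition (2). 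As the resulting $\delta$ and $\sigma$ coincide with those of Theorem \ref{thm1}, Proposition \ref{funceq} and the four-class dichotomy (\ref{H1})--(\ref{H4}) transfer verbatim to $\tilde{H}$, completing the proof.
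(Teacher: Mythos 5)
Your proposal is correct, and its overall skeleton matches the paper's: the sandwich $H_n^* \le \tilde{H}_n \le H_n$, the verification that (A0)--(A4) imply (B0)--(B4) with $\zeta^-=\zeta$ (hence $\sigma^*=\sigma$), and the reduction to the Gaussian surrogates of Theorem \ref{thm2} and its starred analog. Where you genuinely diverge is in the technical device linking $\delta_n^*$ to $\delta_n$. The paper proves (Lemma \ref{LA3}) the exact identity $\delta_n^*(x)=\delta_n(x)$ for \emph{all} $n$ on a co-countable set $D_1$ (since the set $J$ where $D^-U<D^+U$ is countable, so is $A=\bigcup_n\{(y-m)/a_n: y \in J\}$), which allows the remainders themselves to be sandwiched, $R_n^*\le \tilde{R}_n \le R_n$ on $D_1$, and both implications then flow from the single fact $\tilde{R}_n \to 0$. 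You instead use only the monotonicity inequalities $V(y')\le V(y-)\le V(y)$, i.e.\ $\delta_n(x')\le \delta_n^*(x)\le \delta_n(x)$, and recover agreement \emph{of the limits} at continuity points by letting $x'\uparrow x$ (for the forward direction) and $x''\downarrow x$ (for the converse); this avoids the countability argument of Lemma \ref{LA3} at the cost of an extra approximation step and of restricting to continuity points, which is harmless since these form dense sets. Your treatment of (B4) also differs: the paper picks $s_0,s_1$ in the dense set where $D^+h(\textbf{X},\cdot)=D^-h(\textbf{X},\cdot)$ a.s., whereas you dominate the $D^-h$-increments by $D^+h$-increments via monotonicity of the one-sided derivatives (this works, e.g.\ with $s_1=t_1$, $s_0=t_0/2$, using Lemma \ref{LA1} and the $c_r$-inequality of Lemma \ref{crinequality}). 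Two small points you should make explicit: passing from $D^-h(\textbf{X},m)=D^+h(\textbf{X},m)$ $\mathbb{P}$-a.s.\ to $\zeta^-=\zeta$ requires the Fubini step spelled out in Lemma \ref{LA2} (the a.s.\ equality must be transferred to the first associated functions), and in your converse direction the passage from $\Phi_\sigma(\delta_n(x))\to \tilde{H}(x)$ to $\delta_n(x)\to\Phi_\sigma^{-1}(\tilde{H}(x))$ should be read in the compactified line $[-\infty,\infty]$, where $\Phi_\sigma$ is a homeomorphism onto $[0,1]$, so that the values $\tilde{H}(x)\in\{0,1\}$ are covered. With these details supplied, your argument is a complete and valid alternative proof.
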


\begin{proof} First notice the basic relation
\begin{equation} \label{ineq}
 H_n^* \le \tilde{H}_n \le H_n,
\end{equation}
which holds to be true, because $m_n \le \tilde{m}_n \le m_n^*$ a.s.

We prove that the first part of Theorem \ref{thm1} holds for $m_n$ replaced by $\tilde{m}_n$.
To this end observe that by Lemma \ref{LA2} in the appendix the validity of assumtions (A1)-(A4) entail that of (B1)-(B4).
From Theorem \ref{thm2} it is known that
\begin{equation} \label{Rn}
R_n(x):=H_n(a_n x+m)-\Phi_\sigma(\delta_n(x)) \rightarrow 0 \quad \forall \; x \in \mathbb{R}.
\end{equation}
Since (B1)-(B4) hold, it follows analogously that $$R_n^*(x):=H_n^*(a_n x+m)-\Phi_{\sigma^*}(\delta_n^*(x)) \rightarrow 0
\quad \forall \; x \in \mathbb{R}.$$ By Lemma \ref{LA3} in the appendix, $\delta_n^*(x)=\delta_n(x)$ for all $x \in D_1$ and for all $n \in \mathbb{N}$.
Moreover, $\zeta^-=\zeta$ by Lemma \ref{LA2} in the appendix, whence $\sigma^*=\sigma$. Thus
\begin{equation} \label{Rn*}
R_n^*(x)=H_n^*(a_n x+m)-\Phi_\sigma(\delta_n(x)) \rightarrow 0 \quad \forall \; x \in D_1.
\end{equation}
Put $\tilde{R}_n(x):= \tilde{H}_n(a_n x+m)-\Phi_\sigma(\delta_n(x)), x \in \mathbb{R}$. Then by the equality in (\ref{Rn*}) and by (\ref{ineq})
it follows that $R_n^*(x) \le \tilde{R}_n(x) \le R_n(x)$ for all $x \in D_1$. Thus (\ref{Rn}) and (\ref{Rn*}) make us to apply the sandwich-theorem
resulting in $\tilde{R}_n(x) \rightarrow 0$ for each $x \in D_1$.

Now, assume that
\begin{equation} \label{A}
\tilde{H}_n(a_n x+m) \rightarrow H(x) \quad \forall \; x \in C_H,
\end{equation}
where $H$ is increasing and rcll.
From Lemma \ref{LA3} in the appendix we know that $D_1=A^c$, where $A$ is a countable set. Then $D:=(A \cup D_H)^c=D_1 \cap C_H$
is dense. If follows that
$$
\Phi_\sigma(\delta_n(x))=(\tilde{H}_n(a_n x+m)-H(x))-\tilde{R}_n(x)+H(x) \rightarrow H(x) \quad \forall \; x \in D
$$
and therefore
\begin{equation} \label{B}
\delta_n(x) \rightarrow \delta(x) \quad \forall \; x \in D
\end{equation}
with $\delta(x)=\Phi_\sigma^{-1}(H(x))$ as desired.

Conversely, assume that (\ref{B}) holds.
By Theorem \ref{thm1}, $H_n(a_n x+m) \rightarrow H(x)=\Phi_\sigma(\delta(x))$ for all $x \in C_H$.
According to Lemma \ref{LA3}, $\delta_n^* \rightarrow \delta(x)$ for all $x \in D_2$, where $D_2$ is dense. Recall that $\sigma^*=\sigma$. Thus the Interim Conclusion says that $H^*(a_n x+m) \rightarrow H(x)$ for all $x \in C_{H}$. Hence the inequalities (\ref{ineq}) and another application of the sandwich-theorem yield the convergence (\ref{A}).

As to the second part observe that $\delta$ satisfies the functional equations (\ref{funceq}). This can be shown in the same way as for the sequence $(m_n)$. And the solutions of these have been derived in section 2.
\end{proof}

\section{Square-root asymptotics}
All our conditions on $V$ describing the asymptotic behaviour of the distributions of the estimators $\tilde{m}_n$ are of local type, namely:
How does $V$ behave in a small neighborhood of $m$?
The most simple case is when it is differentiable at $m$ as in the following proposition. Its first part generalizes the results of Huber \cite{Huber1} and of Br{\o}ns et.al. \cite{Brons}. The statement of the second part is new.\\

\begin{proposition} \label{gsmooth} Let (A0)-(A4) be satisfied.
Assume that $V$ is differentiable at $m$ with $V^\prime(m)> 0$.
Then
\begin{equation} \label{asnormV}
 \sqrt{n}(\tilde{m}_n -m) \stackrel{\mathcal{D}}{\rightarrow} N(0,\tau^2)\quad \text{ with } \tau^2=\frac{l^2 \zeta}{V^\prime(m)^2}.
\end{equation}
If $V^\prime(m)=0$, then
\begin{equation} \label{infinity}
 \sqrt{n}(\tilde{m}_n -m) \stackrel{\mathcal{D}}{\rightarrow} Y \quad \text{ in } \; \overline{\mathbb{R}},
\end{equation}
where $\mathbb{P}(Y=-\infty)=\mathbb{P}(Y=\infty)= \frac{1}{2}.$
\end{proposition}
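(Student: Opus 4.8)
The plan is to reduce the proposition to the machinery already built in Theorems \ref{thm1} and \ref{class1}--\ref{class4} by analysing how $V(t)=D^+U(t)$ behaves near $m$ under the differentiability hypothesis. The key observation is that, because $V(m)=0=V(m-)$ by (A1), differentiability of $V$ at $m$ means $V(m+t)=V^\prime(m)\,t+o(t)$ as $t\to 0$, so $V$ has a clean linear first-order behaviour on both sides of $m$. I would exploit this directly rather than re-deriving the limit from scratch.

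For the first part, assume $V^\prime(m)>0$. I would choose the normalizing sequence $a_n:=1/(\sqrt{n}\,V^\prime(m))$, which tends to $0$, and compute $\delta_n(x)=\sqrt{n}\,V(m+a_n x)$. Substituting the expansion $V(m+a_n x)=V^\prime(m)\,a_n x+o(a_n)$ gives
\begin{equation*}
\delta_n(x)=\sqrt{n}\,V^\prime(m)\,a_n x+\sqrt{n}\,o(a_n)=x+\sqrt{n}\,o(a_n)\longrightarrow x
\end{equation*}
for every $x\in\mathbb{R}$, since $\sqrt{n}\,o(a_n)=o(\sqrt{n}\,a_n)=o(1/V^\prime(m))\to 0$. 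Thus condition (2) of Theorem \ref{thm1} holds with $\delta(x)=x$ and $D=\mathbb{R}$. By the equivalence in Theorem \ref{thm1}, $H_n(a_n x+m)\to\Phi_\sigma(x)$, i.e.\ $(m_n-m)/a_n\stackrel{\mathcal D}{\to} N(0,1)$, where $\sigma^2=l^2\zeta$. Rescaling by $a_n=1/(\sqrt{n}\,V^\prime(m))$ converts this into $\sqrt{n}(m_n-m)\stackrel{\mathcal D}{\to} N(0,\sigma^2/V^\prime(m)^2)=N(0,\tau^2)$ with $\tau^2=l^2\zeta/V^\prime(m)^2$. Finally, since $V^\prime(m)>0$ forces $V(x)<0$ for $x<m$ and $V(x)>0$ for $x>m$ near $m$, Lemma \ref{unique} gives uniqueness of $m$, and the preceding section extends the conclusion from $m_n$ to an arbitrary measurable minimizer $\tilde m_n$.

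For the second part, assume $V^\prime(m)=0$. Here the first-order term vanishes, so $V(m+t)=o(t)$ on both sides, and the plan is to show this degeneracy pushes $\delta_n$ to $\pm\infty$. With the natural choice $a_n=1/\sqrt{n}$, I would show $\delta_n(x)=\sqrt{n}\,V(m+x/\sqrt{n})=\sqrt{n}\,o(1/\sqrt{n})=o(1)\to 0$ for each fixed $x$; this places us in \textbf{Case 2} of the Theorem \ref{thm1} analysis, yielding $\delta$ of the form (\ref{d4}) with the limit $H$ in \textbf{class 4}. To pin down the constants $c_1=c_2=0$ I would argue that, since $m$ is unique, $V(m+t)\ne 0$ for every $t\ne 0$, so $\delta_n(x)$ cannot stabilise at $0$ for any $x\ne 0$; combined with monotonicity and $\delta(0)=0$ this forces $\delta(x)=\infty$ for $x>0$ and $\delta(x)=-\infty$ for $x<0$, which is exactly (\ref{d4}) with $c_1=c_2=0$. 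Then $H(x)=\Phi_\sigma(\delta(x))$ equals $0$ for $x<0$, $1/2$ at $x=0$, and $1$ for $x>0$, so the limit $Y$ satisfies $\mathbb{P}(Y=-\infty)=\mathbb{P}(Y=\infty)=\tfrac12$.

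**The main obstacle** I anticipate is the second part: the crude bound $\delta_n(x)=\sqrt{n}\,o(1/\sqrt{n})\to 0$ only gives $\delta\equiv 0$ pointwise, which alone would make $H$ degenerate and would not reveal the two-sided escape to $\pm\infty$. The delicate point is that a single scale $a_n=1/\sqrt{n}$ need not be the correct normalizer; rather, the genuine content is that \emph{no} sequence $a_n\to 0$ can produce a finite nondegenerate limit, and the only admissible nondegenerate limit belongs to \textbf{class 4}. I would handle this by invoking the classification in the proof of Theorem \ref{thm1} directly: since $V^\prime(m)=0$ rules out the regularly-varying balance of exponent $\alpha=1$ needed for classes 1--3 on the relevant side, and uniqueness of $m$ rules out a flat plateau, the functional equation (\ref{fe}) leaves only the \textbf{class 4} solution (\ref{d4}) with $c_1=c_2=0$. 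Making the step ``$V^\prime(m)=0$ excludes classes 1--3'' fully rigorous—by showing the ratio conditions (\ref{b1}), (\ref{b2}), (\ref{b3}) cannot hold with a positive finite $\alpha$ when $V$ is differentiable with vanishing derivative—is where the real work lies.
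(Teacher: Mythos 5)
Your treatment of the case $V^\prime(m)>0$ is correct and is essentially the paper's own argument: the paper takes $a_n=1/\sqrt{n}$ and reads $\delta_n(x)=\sqrt{n}\,V(m+x/\sqrt{n})$ as $x$ times a difference quotient, obtaining $\delta(x)=V^\prime(m)x$ and $H=\Phi_\sigma(V^\prime(m)\,\cdot\,)$, while you absorb the constant into $a_n=1/(\sqrt{n}\,V^\prime(m))$ and get $\delta(x)=x$; the two are equivalent and both land on $\tau^2=l^2\zeta/V^\prime(m)^2$.

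The case $V^\prime(m)=0$, however, contains a genuine error, and it stems from a misreading of what the limit $\delta\equiv 0$ means. Your own computation is already the whole proof: with $a_n=1/\sqrt{n}$ and $V(m)=0$, $\delta_n(x)=x\cdot\big(V(m+x/\sqrt{n})-V(m)\big)/(x/\sqrt{n})\rightarrow V^\prime(m)\,x=0$ for every $x$, hence $H\equiv\Phi_\sigma(0)=1/2$. This $H$ is \emph{not} degenerate at zero (that would be $H=0$ on $(-\infty,0)$ and $H=1$ on $(0,\infty)$); it is a genuine sub distribution function, and by the remark following Theorem \ref{thm1} the convergence $\sqrt{n}(\tilde m_n-m)\stackrel{\mathcal{D}}{\rightarrow}Y$ then holds in $\overline{\mathbb{R}}$ with $\mathbb{P}(Y\le x)=1/2$ for every finite $x$, so that $\mathbb{P}(Y=-\infty)=\lim_{x\to-\infty}H(x)=1/2$ and $\mathbb{P}(Y=+\infty)=1-\lim_{x\to+\infty}H(x)=1/2$. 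In other words, $H\equiv 1/2$ \emph{is} exactly the two-sided escape of mass to $\pm\infty$; no repair is needed, and the classification into classes 1--4 plays no role here since $a_n=1/\sqrt{n}$ is dictated by the statement and $H$ is not a proper distribution function.

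Your attempted repair is wrong on three counts. First, the conclusion $\delta(x)=+\infty$ for $x>0$ and $-\infty$ for $x<0$ contradicts the limit $\delta_n(x)\to 0$ that you yourself computed. Second, the argument ``$V(m+t)\neq 0$ for $t\neq 0$, so $\delta_n(x)$ cannot stabilise at $0$'' is fallacious: take $V(m+t)=t^3$, then $\delta_n(x)=x^3/n\to 0$ although $V$ vanishes only at $m$ (and note also that uniqueness of $m$ is not guaranteed when $V^\prime(m)=0$, since $V$ may vanish on an interval around $m$). Third, the limit you end with, $H=0$ on $(-\infty,0)$ and $H=1$ on $(0,\infty)$, is the distribution function of the constant $Y=0$, for which $\mathbb{P}(Y=\pm\infty)=0$ -- the opposite of the conclusion you want; mass at $\pm\infty$ is read off from $\lim_{x\to\mp\infty}H(x)$, not from a jump of size $1/2$ at the origin. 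Indeed, the form (\ref{d4}) with $c_1=c_2=0$ is excluded from class 4 precisely because it corresponds to the degenerate limit.
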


\begin{proof} Let $a_n = 1/\sqrt{n}$. Then by $V(m)=0$
\begin{equation} \label{deltanV}
 \delta_n(x)=\sqrt{n} V(m+ \frac{x}{\sqrt{n}})=\frac{V(m+ \frac{x}{\sqrt{n}})-V(m)}{\frac{x}{\sqrt{n}}} x \quad \forall \; 0 \neq x \in \mathbb{R}.
\end{equation}
Recall that $\delta_n(0)=0$. Therefore, $\delta_n(x) \rightarrow V^\prime(m)\cdot x = \delta(x)$ for all $x \in \mathbb{R}$ and the assertions follow from Theorem \ref{thm1}. Indeed,  $H(x)=\Phi_\sigma(V^\prime(m) x)$ is the distribution function of $N(0, \sigma^2/(V^\prime(m))^2)$, if $V^\prime(m)>0$. This shows (\ref{asnormV}). If $V^\prime(m)=0$, then $H(x)=1/2$ for all $x \in \mathbb{R}$, which is the distribution function of $Y$. This shows the second part (\ref{infinity}).
\end{proof}

In fact, Proposition \ref{gsmooth} admits a slight generalization, which may be useful sometimes.\\

\begin{remark} \label{onesided}
Assume that $V$ has only left- and right derivatives $D^-V(m)$ and $D^+V(m)$ at point $m$, which both are positive. Then it follows from (\ref{deltanV}) and Theorem \ref{thm1} that
$$
 \sqrt{n}(\tilde{m}_n -m) \stackrel{\mathcal{D}}{\rightarrow} Y,
$$
where $Y=1_{\{N<0\}} \frac{\sigma}{D^-V(m)}N+1_{\{N\ge 0\}} \frac{\sigma}{D^+V(m)}N$ and $N=N(0,1)$.
In case that $D^-V(m)=0<D^+V(m)$ the equations (\ref{deltanV}) show that $\delta_n(x) \rightarrow \delta(x)$ for all $x \in \mathbb{R}$, where
$\delta(x)=0$ for $x<0$ and $\delta(x)=D^+V(m) \cdot x$ for $x \ge 0$. Therefore by Theorem \ref{thm1},
\begin{equation} \label{Y}
 \sqrt{n}(\tilde{m}_n -m) \stackrel{\mathcal{D}}{\rightarrow} Y \quad \text{ in } \; \overline{\mathbb{R}}=\mathbb{R} \cup \{-\infty,\infty\},
\end{equation}
where $Y$ has sub distribution function
$H(x)= 1_{\{x<0\}} \frac{1}{2}+ 1_{\{x \ge 0\}} \Phi_\sigma(D^+V(m) x)$.
Notice that $\mathbb{P}(Y=-\infty)=\frac{1}{2}$ and $\mathbb{P}(Y=\infty)=0$.
Moreover, $\mathbb{P}(Y \in (-\infty,0])=0$ and $\mathbb{P}(0 < Y \le x)=\Phi_\sigma(D^+V(m) x)-1/2$.

In case that $D^-V(m)>0=D^+V(m)$ the convergence (\ref{Y}) holds, where $Y$ has sub distribution function
$H(x)= 1_{\{x \ge 0\}} \frac{1}{2}+ 1_{\{x < 0\}} \Phi_\sigma(D^-V(m) x)$. Here, $\mathbb{P}(Y=\infty)=\frac{1}{2}$ and $\mathbb{P}(Y=-\infty)=0$.

Finally, if only $D^+V(m)$ exists and is positive, then
\begin{equation} \label{ThmSerfling1}
 \mathbb{P}(\sqrt{n}(\tilde{m}_n -m)\le x) \rightarrow \Phi_\sigma(D^+V(m) x ) \quad \text{ for all } x \ge 0.
\end{equation}
Similarly, if only $D^-V(m)$ exists and is positive, then
\begin{equation} \label{ThmSerfling2}
 \mathbb{P}(\sqrt{n}(\tilde{m}_n -m)\le x) \rightarrow \Phi_\sigma(D^-V(m) x ) \quad \text{ for all } x \le 0.
\end{equation}
These last two results (\ref{ThmSerfling1}) and (\ref{ThmSerfling2}) follow analogously to the proof
of Theorem \ref{thm1} (first part).
\end{remark}

\vspace{0.3cm}

Next, we think that the basic functions $h$ are generated as follows:
Let $\psi:S^l \times \mathbb{R} \rightarrow \mathbb{R}$ be a bivariate function such that $\psi(\textbf{x},\cdot)$ is monotone increasing for each $\textbf{x} \in S^l$. W.l.o.g. we can assume that $\psi(\textbf{x}, \cdot)$ is rcll, because othererwise use instead its rcll version $\tilde{\psi}(\textbf{x},\cdot)$, compare the section in front of Theorem \ref{thm1}. 
Define
$$
 h(\textbf{x},t) := \int_0^t \psi(\textbf{x},s) ds.
$$
Then $h(\textbf{x}, \cdot)$ is convex for every $\textbf{x} \in S^l$, confer e.g. section 1.6 in Niculescu and Persson \cite{Niculescu}.
Moreover Theorem 24.2 in Rockafellar \cite{Rockafellar} says that $D^+ h(\textbf{x},t) = \psi(\textbf{x},t+)=\psi(\textbf{x},t)$ by right continuity and $D^- h(\textbf{x},t) = \psi(\textbf{x},t-)$. Then $$\zeta= \int_S \mathbb{E}[\psi(x,X_2,\ldots,X_l,m)]^2 P(dx),$$
if $l \ge 2$ and $\zeta= \mathbb{E}[\psi(X_1,m)^2]$, if $l=1$.
From (\ref{V}) it follows that
\begin{equation} \label{Vpsi}
 V(t)= \int_{S^l} \psi(\textbf{x},t) P^l(d\textbf{x})= \mathbb{E}[\psi(\textbf{X},t)], \quad t \in \mathbb{R}.
\end{equation}
Similarly, $V^\pm_n$ in (\ref{Vnpm}) can be rewritten as
$$
V^+_n(t)= \binom{n}{l}^{-1} \sum_{\textbf{i}} \psi(\textbf{X}_\textbf{i},t) \quad \text{ and } \quad V^-_n(t)= \binom{n}{l}^{-1} \sum_{\textbf{i}} \psi(\textbf{X}_\textbf{i},t-).
$$
Here, recall that $\textbf{X}_\textbf{i}=(X_{i_1},\ldots,X_{i_l})$ and that the summation $\sum_\textbf{i}$ extends over all $l$-tuples $\textbf{i}=(i_1,\ldots,i_l)$ with $1\le i_1<i_2<\ldots<i_l\le n$.
The following result is rather just a reformulation of Proposition \ref{gsmooth}. It enables us to compare it with a known result in the literature.\\

\begin{corollary} \label{zero} Let $\tilde{m}_n$ be a real random variable such that
\begin{equation} \label{zeroineq}
 V^-_n(\tilde{m}_n) \le 0 \le V^+_n(\tilde{m}_n) \quad \text{a.s.}
\end{equation}
Suppose that $V$ in (\ref{Vpsi}) is continuous and strictly increasing on $\mathbb{R}$ and differentiable at $m$. Moreover, assume that:
\begin{itemize}
\item[(0)] $\psi(\cdot,t)$ and $\psi(\cdot,t-)$ are $P^l$-integrable for all $t \in \mathbb{R}$.
\item[(1)] $V(m)=0$.
\item[(2)] $\mathbb{E}[\psi(\textbf{X},m)^2] < \infty$.
\item[(3)] $\zeta >0.$
\item[(4)] There exist reals $t_0 < 0 < t_1$ such that $\mathbb{E}[\big(\psi(\textbf{X},m+t)-\psi(\textbf{X},m)\big)^2] < \infty$ for $t \in \{t_0, t_1\}$.
\end{itemize}
If $V^\prime(m) > 0$, then
$$
 \sqrt{n}(\tilde{m}_n -m) \stackrel{\mathcal{D}}{\rightarrow} N(0,\tau^2)\quad \text{ with } \tau^2=\frac{l^2 \zeta}{V^\prime(m)^2}.
$$
If $V^\prime(m) = 0$, then
$$
 \sqrt{n}(\tilde{m}_n -m) \stackrel{\mathcal{D}}{\rightarrow} Y \quad \text{ in } \; \overline{\mathbb{R}},
$$
where $Y$ is uniformly distributed on $\{-\infty,\infty\}$.
\end{corollary}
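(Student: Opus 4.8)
The plan is to recognize this Corollary as a direct specialization of Proposition \ref{gsmooth} to kernels of the integral form $h(\textbf{x},t)=\int_0^t \psi(\textbf{x},s)\,ds$, so that essentially all the work consists in translating the hypotheses (0)--(4) into (A0)--(A4) and then quoting Proposition \ref{gsmooth} verbatim.

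First I would check that $\tilde{m}_n$ really is a minimizer of $U_n$. Using the reformulations $V_n^+(t)=\binom{n}{l}^{-1}\sum_\textbf{i}\psi(\textbf{X}_\textbf{i},t)$ and $V_n^-(t)=\binom{n}{l}^{-1}\sum_\textbf{i}\psi(\textbf{X}_\textbf{i},t-)$ established just above, the defining inequalities (\ref{zeroineq}) are exactly the characterization (\ref{AUn}) of $\mathrm{Argmin}(U_n)$; hence $\tilde{m}_n\in\mathrm{Argmin}(U_n)$ a.s., which is precisely the hypothesis under which Proposition \ref{gsmooth} applies to an arbitrary measurable minimizer.

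Next I would verify (A0)--(A4). By Theorem 24.2 in Rockafellar \cite{Rockafellar} together with the assumed right continuity one has $D^+h(\textbf{x},t)=\psi(\textbf{x},t)$ and $D^-h(\textbf{x},t)=\psi(\textbf{x},t-)$. With these identities: hypothesis (0) is literally (A0); (2) is (A2); the $\zeta$ of (3) coincides termwise with the $\zeta$ of (A3), since $D^+h(\textbf{x},m)=\psi(\textbf{x},m)$, so (3) yields (A3); and (4) is (A4), because $D^+h(\textbf{x},m+t)-D^+h(\textbf{x},m)=\psi(\textbf{x},m+t)-\psi(\textbf{x},m)$. For (A1) I would invoke the standing assumption that $V$ is continuous: hypothesis (1) gives $V(m)=0$, and continuity gives $V(m-)=V(m)=0$, so that $D^+U(m)=0=D^-U(m)$.

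Finally, with (A0)--(A4) in force and $V$ differentiable at $m$, I would simply apply Proposition \ref{gsmooth}. When $V'(m)>0$ it yields the stated asymptotic normality with $\tau^2=l^2\zeta/V'(m)^2$; when $V'(m)=0$ it yields the limit $Y$ with $\mathbb{P}(Y=-\infty)=\mathbb{P}(Y=\infty)=\tfrac{1}{2}$, which is exactly the uniform law on $\{-\infty,\infty\}$. Since nothing beyond these translations is required, there is no genuine obstacle; the only points demanding care are the left-limit half of (A1)—where the continuity of $V$, rather than merely $V(m)=0$, is what is actually used—and the verification that the two notations for $\zeta$ agree under the identification $D^+h(\textbf{x},m)=\psi(\textbf{x},m)$.
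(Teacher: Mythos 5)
Your proposal is correct and follows exactly the paper's own route: the paper likewise verifies that (0)--(4) imply (A0)--(A4), notes that (\ref{zeroineq}) together with (\ref{AUn}) makes $\tilde{m}_n$ an almost-sure minimizer of $U_n$, and then invokes Proposition \ref{gsmooth}. Your write-up merely spells out the details the paper leaves implicit (in particular the use of continuity of $V$ for the left-limit half of (A1)), and those details are all accurate.
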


\begin{proof} From (0)-(4) it follows that (A0)-(A4) are fulfilled. Furthermore, $\tilde{m}_n \in \text{Argmin}(U_n)$ a.s. by (\ref{AUn}).
Thus the assertion follows from Proposition \ref{gsmooth}.
\end{proof}

The above corollary generalizes Theorem A on p.251 in Serfling \cite{Serfling} in many respects. Firstly, there the length $l=1$. Secondly, the estimator $\tilde{m}_n$ must satisfy the equality (*) $V^+_n(\tilde{m}_n)=0$. This assumption is stronger than our (\ref{zeroineq}). Indeed, if $V^+_n(\tilde{m}_n)=0$, then by monotonicity $V^-_n(\tilde{m}_n) \le V^+_n(\tilde{m}_n) =0$, whence
(\ref{zeroineq}) is fulfilled. But, if for instance $\psi(x,t)= 1_{\{x \le t\}}-\alpha$ with $x, t \in \mathbb{R}$ and $\alpha \in (0,1)$, then
$V_n^+(t) = F_n(t)-\alpha$, where $F_n$ is the empirical distribution function pertaining to the sample $X_1,\ldots,X_n$. Consequently, an estimator
$\tilde{m}_n$ satisfying (*) cannot exist, whenever $\alpha \notin \{k/n: 1 \le k \le n-1\}$, because (*) reduces to $F_n(\tilde{m}_n)=\alpha$. In paricularly in case that $\alpha$ is an irrational number,
then for every $n \in \mathbb{N}$ no estimator exists. In contrast, the inequalities (\ref{zeroineq}) simplify to
$F_n(\tilde{m}_n-) \le \alpha \le F_n(\tilde{m}_n)$, which means that $\tilde{m}_n$ is an empirical $\alpha$-quantile.
Thirdly, it is required that $\mathbb{E}[\psi(\textbf{X},t)^2]$ is finite for $t$ in a neighborhood of $m$,
whereas we require this only for $t=m$.  As to our condition (4) notice that it follows from that third assumption by the $c_r$-inequality, see Lemma \ref{crinequality} in the Appendix below.
Fourthly, the function $\mathbb{E}[\psi(\textbf{X},t)^2]$ must be continuous at $t=m$.
Fifthly, a further assumption in Theorem A in Serfling \cite{Serfling} is that $\tilde{m}_n$ is strongly consistent, which by Lemma A on p.249 in
Serfling is guaranteed if $\psi(x,t)$ is continuous in $t$ in a neighborhood of $m$.
Finally, we have a result on distributional convergence even in the case that $V^\prime(m)=0$ (and actually if only the one-sided derivatives exist, confer Remark \ref{onesided}.)

\section{Examples in statistics}
In statistical applications the bivariate functions $h$ typically is of the form
\begin{equation} \label{hphi}
 h(\textbf{x},t)=\phi(t-k(\bf{x})),
\end{equation}
where $\phi:\mathbb{R} \rightarrow \mathbb{R}$ is convex and $k:S^l \rightarrow \mathbb{R}$ is symmetricc and measurable. W.l.o.g. we may assume that $\phi(0)=0$, because shifts along the ordinate leave minimizer unchanged. Then $\phi$ admits the
representation $\phi(t)=\int_0^t \varphi(s) ds, t \in \mathbb{R}$, where $\varphi=D^+\phi$ is increasing and rcll. Conversely, starting from
a function $\varphi$ that is increasing and rcll we can introduce $\phi(t):=\int_0^t \varphi(s) ds, t \in \mathbb{R}$. It follows that
$\phi$ is convex and that $D^+\phi(t)=\varphi(t)$ and $D^-\phi(t)=\varphi(t-)$, whence $\phi$ is differentiable at $t$, if (and only if) $\varphi$ is continuous at $t$. If $\varphi$ is strictly increasing, then $\phi$ is strictly convex. Let $R:=\mathbb{P}\circ k(\textbf{X})^{-1}$ be the distribution of $k(\textbf{X})$ and $F$ the corresponding distribution function.

Recall that $m$ minimizes $U$ if and only if
\begin{equation} \label{misminimizer}
V(m-)\le 0 \le V(m).
\end{equation}
In case that $V$ is continuous (as for instance when $\varphi$ is continuous) this is the same as $V(m)=0$.
Notice that $D^+h(\textbf{x},t)=\varphi(t-k(\textbf{x})+)=\varphi(t-k(\textbf{x}))$ and $D^-h(\textbf{x},t)=\varphi(t-k(\textbf{x})-)$.
Using the Change of variable formula we see that (A0) is satisfied if and only if
\begin{equation} \label{A0}
\int_\mathbb{R}|\varphi(t-x\pm)| R(dx) <\infty \text{ for all } t \in \mathbb{R}.
\end{equation}
In that case by (\ref{Dpm})
\begin{equation}
 V(t)= \int_{S^l} \varphi(t-k(\textbf{x})) P^l(d\textbf{x})= \int_\mathbb{R} \varphi(t-x) R(dx) \quad \forall \; t \in \mathbb{R}.
\end{equation}
Deduce from (\ref{Vnpm}) that
$$
 V_n^+(t)= \binom{n}{l}^{-1} \sum_{1 \le i_1<\ldots<i_l \le n} \varphi(t-k(X_{i_1},\ldots,X_{i_l}))
$$
and
$$
 V_n^-(t)= \binom{n}{l}^{-1} \sum_{1 \le i_1<\ldots<i_l \le n} \varphi(t-k(X_{i_1},\ldots,X_{i_l})-)
$$
By (\ref{AUn}) a point $\tilde{m}_n$ minimizes $U_n$ if and only if
\begin{equation} \label{mnsatisfiesinequalities}
 \sum_{1 \le i_1<\ldots<i_l \le n} \varphi(\tilde{m}_n-k(X_{i_1},\ldots,X_{i_l})-) \le 0 \le \sum_{1 \le i_1<\ldots<i_l \le n} \varphi(\tilde{m}_n-k(X_{i_1},\ldots,X_{i_l})).
\end{equation}
If $\varphi$ is continuous, then $V_n^+$ and $V_n^-$ coincide and therefore $\tilde{m}_n$ solves the equation
\begin{equation} \label{mnisazero}
 \sum_{1 \le i_1<\ldots<i_l \le n} \varphi(t-k(X_{i_1},\ldots,X_{i_l}))=0, \; t \in \mathbb{R}.
\end{equation}
Another use of the Change of variable formula shows that (A2) holds exactly when
\begin{equation} \label{A2}
\int_\mathbb{R} \varphi(m-x)^2 R(dx) < \infty.
\end{equation}
Further, $\zeta$ in (A3) can be rewritten as $\zeta=\int_\mathbb{R} \varphi(m-x)^2 R(dx)$, if $l=1$ and
\begin{equation} \label{A3}
\zeta=\int_S \mathbb{E}[\varphi(m-k(x,X_2,\ldots,X_l))]^2 P(dx), \text{ if } l \ge 2.
\end{equation}
Finally, (A4) reduces to
\begin{equation} \label{A4}
 \exists \; t_0<0<t_1 :  \int_\mathbb{R}\big(\varphi(m+t-x)-\varphi(m-x)\big)^2 R(dx) < \infty \;\;  \text{ for } \; t \in \{t_0,t_1\}.
\end{equation}

The existence of a minimizing point $m$ with the property (A1) is not a matter of course. Our next result gives general conditions which ensure
the existence.
Here, recall that $F$ denotes the distribution function of $R$.\\

\begin{lemma} \label{mexists} Let $\phi$ be strictly convex. Suppose that in addition
\begin{itemize}
\item[(1)]
$\phi$ is coercive with $\varphi(0-)\le 0 \le \varphi(0)$

or
\item[(2)]
$\phi$ is even and $Z:=k(X_1,\ldots,X_l)$ is symmetric (not necessarily at zero).
\end{itemize}
Morover, assume that
$\phi$ is differentiable on $\mathbb{R} \setminus \{0\}$ or equivalently that $\varphi$ is continuous on $\mathbb{R} \setminus \{0\}$.
If $F$ is continuous, then there exists a unique $m \in \mathbb{R}$ satisfying condition (A1). It fulfills the equation
\begin{equation} \label{inteq}
 \int_{\mathbb{R} \setminus \{t\}} \varphi(t-x) R(dx)=0,\; t \in \mathbb{R}.
\end{equation}
For $\phi$ differentiable on the entire real line the continuity assumption on $F$ can be dropped and the
integrals in (\ref{inteq}) are over the entire space $\mathbb{R}$.

If (2) holds, then $m$ is the center of symmetry of $Z$.
\end{lemma}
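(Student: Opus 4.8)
The plan is to reduce the entire statement to an analysis of the single increasing function
$V(t)=\int_{\mathbb{R}}\varphi(t-x)\,R(dx)=\mathbb{E}[\varphi(t-Z)]$, where $Z:=k(\mathbf{X})$ has law $R$. This $V$ is finite for every $t$ by the standing assumption (A0) and, being $D^+U$, is rcll with left limit $V(t-)=\mathbb{E}[\varphi((t-Z)-)]$. Since $\phi$ is strictly convex, $\varphi=D^+\phi$ is \emph{strictly} increasing, hence so is $V$; this already gives uniqueness, for if $m_1<m_2$ both satisfied (A1) then picking $u\in(m_1,m_2)$ would give $0=V(m_1)<V(u)\le V(m_2-)=0$, a contradiction (equivalently, one invokes Lemma~\ref{unique}). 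Because (A1) reads $V(m)=0=V(m-)$, once $V$ is shown to be continuous it suffices to produce a single zero $m$ of $V$: continuity then automatically forces $V(m-)=V(m)=0$.

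The first technical step is the continuity of $V$. As $\varphi$ is continuous off $0$, its only jump sits at $0$, so $\varphi(s)-\varphi(s-)=(\varphi(0)-\varphi(0-))\,\mathbf{1}_{\{s=0\}}$ and therefore
$$V(t)-V(t-)=\mathbb{E}\big[\varphi(t-Z)-\varphi((t-Z)-)\big]=(\varphi(0)-\varphi(0-))\,\mathbb{P}(Z=t).$$
If $F$ is continuous, then $\mathbb{P}(Z=t)=0$ for every $t$ and $V$ has no jumps; if instead $\phi$ is differentiable on all of $\mathbb{R}$, then $\varphi(0)=\varphi(0-)$ and the jump vanishes with no hypothesis on $F$. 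In both cases $V$ is continuous.

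For existence I would treat the two alternatives separately. Under (1), the condition $\varphi(0-)\le 0\le\varphi(0)$ together with strict monotonicity yields $\varphi(s)>0$ for $s>0$ and $\varphi(s)<0$ for $s<0$, so that $\varphi(\infty):=\lim_{s\to\infty}\varphi(s)\ge\varphi(1)>0$ and $\varphi(-\infty)\le\varphi(-1)<0$. Applying the monotone convergence theorem to $\varphi(t-Z)-\varphi(-Z)$ gives $V(t)\to\varphi(\infty)\in(0,\infty]$ as $t\to+\infty$ and $V(t)\to\varphi(-\infty)\in[-\infty,0)$ as $t\to-\infty$; the continuous function $V$ thus changes sign and, by the intermediate value theorem, has a (unique) zero $m$. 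Under (2), $\phi$ even and differentiable off $0$ forces $\varphi$ to be odd away from $0$, i.e.\ $\varphi(-s)=-\varphi(s)$ for $s\ne0$; writing $c$ for the center of symmetry of $Z$ and $W:=Z-c$ (so $W\stackrel{d}{=}-W$ and $\mathbb{P}(W=0)=0$), one obtains both $\mathbb{E}[\varphi(-W)]=\mathbb{E}[\varphi(W)]$ (from the symmetry of $W$) and $\mathbb{E}[\varphi(-W)]=-\mathbb{E}[\varphi(W)]$ (from oddness, valid almost surely since $\{W=0\}$ is null), which together force $V(c)=\mathbb{E}[\varphi(c-Z)]=\mathbb{E}[\varphi(-W)]=0$. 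Hence $m=c$, which simultaneously identifies $m$ as the center of symmetry of $Z$; all these expectations are finite by (A0) and the symmetry of $W$.

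Finally I would record the integral equation: since $V(m)=\int_{\mathbb{R}}\varphi(m-x)\,R(dx)=0$ and, under continuity of $F$, $\mathbb{P}(Z=m)=0$ (so that deleting the single point $x=m$ alters nothing), equation (\ref{inteq}) follows; when $\phi$ is differentiable everywhere the same identity holds over all of $\mathbb{R}$ without any continuity assumption on $F$. I expect the main obstacle to be the continuity of $V$—more precisely, isolating the atomic contribution $(\varphi(0)-\varphi(0-))\,\mathbb{P}(Z=t)$ produced by the kink of $\phi$ at $0$ and seeing that exactly the two stated hypotheses annihilate it—together with the bookkeeping in case (2), where the symmetry and oddness identities must be combined on the full-measure event $\{W\ne0\}$ while relying on (A0) to keep every expectation finite.
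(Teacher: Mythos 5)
Your proof is correct, but it takes a genuinely different route from the paper: the paper's entire proof is a one-line citation of Lemma 2.12 in Ferger \cite{Ferger0} (applied with $Q=R$ and $v=\phi$), so no self-contained argument appears in the text, whereas you prove everything from scratch. Your reduction to the single function $V(t)=\mathbb{E}[\varphi(t-Z)]$ is well organized: strict convexity of $\phi$ gives strict monotonicity of $\varphi$ and hence of $V$, which settles uniqueness (in effect re-deriving Lemma~\ref{unique}); the jump identity $V(t)-V(t-)=\bigl(\varphi(0)-\varphi(0-)\bigr)\mathbb{P}(Z=t)$ isolates exactly why either continuity of $F$ or differentiability of $\phi$ at $0$ yields continuity of $V$, which is the structural heart of the lemma and is invisible in the paper's citation; existence then follows from the sign change of $V$ (case (1)) or the symmetry/oddness cancellation $\mathbb{E}[\varphi(-W)]=\mathbb{E}[\varphi(W)]=-\mathbb{E}[\varphi(W)]$ (case (2)). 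Your argument also stays entirely within machinery the paper already provides: (A0) for finiteness, (\ref{Dpm}) together with Rockafellar's Theorem 24.1 for $V(t-)=\mathbb{E}[\varphi((t-Z)-)]$. What the paper's approach buys is brevity and reuse of an already-published result; what yours buys is transparency about where each hypothesis enters, and it even exposes that coercivity in (1) is redundant given strict convexity and $\varphi(0-)\le 0\le\varphi(0)$. One small point to tighten: in case (2) under the addendum where $\phi$ is differentiable on all of $\mathbb{R}$ but $F$ is not assumed continuous, your step $\mathbb{P}(W=0)=0$ is unavailable; there you should instead note that $\varphi(0)=0$ (evenness plus continuity at $0$), so $\varphi$ is odd on all of $\mathbb{R}$ and the cancellation argument needs no null-set exclusion.
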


\begin{proof} The assertion follows from Lemma 2.12 in Ferger \cite{Ferger0} with $Q=R$ and $v=\phi$ there.
\end{proof}

In our first class of examples we consider very smooth functions $\varphi$.\\

\begin{example}(\textbf{smooth case 1}) \label{smooth1} Assume that $\zeta$ in (\ref{A3}) is positive. Let $\varphi$ be differentiable with a bounded derivative $\varphi^\prime$ and
strictly increasing with $\varphi(0)=0$.
If $\int x^2 R(dx)=\mathbb{E}[k(\textbf{X})^2] < \infty$ and if $R$ is not a discrete probability distribution,
then there exists a unique minimizer $m$ of $U$, such that
$$
 \sqrt{n}(\tilde{m}_n -m) \stackrel{\mathcal{D}}{\rightarrow} N(0,\tau^2)\quad \text{ with } \tau^2=\frac{l^2 \zeta}{(\int \varphi^\prime(m-x)R(dx))^2} \in (0,\infty).
$$
Here, the integral $\int \varphi^\prime(m-x)R(dx)$ in the denominator  is positive and finite.
If $\varphi^\prime>0$, then the non-discreteness assumption on $R$ can be dropped.
\end{example}

\begin{proof} Since $\varphi$ in particular is continuous, it follows that $\phi$ is differentiable. $\phi$ is strictly convex, because
$\varphi$ is strictly increasing. To see coercivity of $\phi$ consider $t>0$. Choose a fixed point $\bar{t} \in (0,t)$. Then
$$\phi(t)=\int_0^t \varphi(s) ds = \int_0^{\bar{t}} \varphi(s) ds + \int_{\bar{t}}^t \varphi(s) ds \ge \varphi(\bar{t})(t-\bar{t}).$$
Notice that by strict monotonicity $\varphi(\bar{t})>\varphi(0)=0$. Taking the limit in the above inequality results in
$\phi(t) \rightarrow \infty$ as $t \rightarrow \infty$. Analogously one shows that $\phi(t) \rightarrow \infty$ as $t \rightarrow -\infty$.
Hence $\phi$ is coercive. Thus Lemma \ref{mexists} guarantees the existence of an unique $m \in \mathbb{R}$ satisfying assumption (A1).
Next we check that the remaining A-assumptions are fulfilled. Let $c \in \mathbb{R}$ with $\varphi^\prime \le c$. To see (\ref{A0}) observe that
$$|\varphi(t-x\pm)|=|\varphi(t-x)|=|\varphi(t-x)-\varphi(0)|\le c|t-x|\le  c (|t|+|x|).$$ Consequently, $\int |\varphi(t-x\pm)|R(dx)\le c(|t|+\int |x| R(dx)) < \infty.$
Similarly, one obtains that $\int \varphi(m-x)^2 R(dx) \le 2 c^2( m^2+ \int x^2 R(dx)) < \infty$ and that $\int_\mathbb{R}(\varphi(m+t-x)-\varphi(m-x))^2 R(dx) \le 2 c^2 t^2 < \infty$, which yields (\ref{A2}) and (\ref{A4}). So all assumptions (A0)-(A4) are
fulfilled. Further, by the Monotone Convergence Theorem $V$ is continuous and it is increasing by the monotonicity of the integral. In fact,
$V$ is strictly increasing, because otherwise there are two real points $s<t$ such that $\int \varphi(t-x)-\varphi(s-x) R(dx)=0$.
Therfore $1=R(\{x \in \mathbb{R}: \varphi(t-x)=\varphi(s-x)\})=R(\emptyset)$, because $\varphi$ is strictly increasing.

Furthermore, $V$ is differentiable at $m$ with $V^\prime(m)=\int \varphi^\prime(m-x) R(dx)$ by the Differentiation lemma. Here, $V^\prime(m)>0$
because otherwiese $\int \varphi^\prime(m-x) R(dx)=0$. Since $\varphi^\prime \ge 0$ as $\varphi$ is increasing, it follows that $R(A)=1$, where
$A=\{x \in \mathbb{R}: \varphi^\prime(m-x)=0\}$. Now, $\varphi$ actually is strictly increasing, whence $B:=\{t \in \mathbb{R}: \varphi^\prime(t)=0\}$ is denumerable. But $A =\{m-t:t \in B\}$, whence $A$ is a denumerable subset of $\mathbb{R}$ with $R(A)=1$. This means that $R$ is a discrete
probability distribution in contradiction to our assumption. Thus we have verified all conditions of Proposition \ref{gsmooth}, which yields the
asymptotic normality.

If actually $\varphi^\prime >0$, then $A=\emptyset$ in contradiction to $R(A)=1$. This shows the last part of the proposition.
\end{proof}

As a simple consequence of Example \ref{smooth1} we obtain the Central Limit Theorem (U-CLT) for non-degenerate $U$-statistics.\\
\begin{example} (\textbf{U-statistics})\label{L2} Let $\phi(t)=t^2$, so that $\varphi(t)=2 t, \varphi^\prime=2$ and $V(t)= 2(t-\mu)$, where $\mu=\int y R(dy)= \mathbb{E}[k(\textbf{X})].$ Therefore, $m=\mu$, because $V$ is continuous. Further, it follows from (\ref{mnisazero}) that $$\tilde{m}_n= \binom{n}{l}^{-1} \sum_{1 \le i_1<\ldots<i_l \le n} k(X_{i_1},\ldots,X_{i_l}),$$
i.e. $\tilde{m}_n$ is the $U$-statistic with kernel $k$.
Recall the kernel $K$ in (A3):
$$
 K(\textbf{x}):=D^+h(\textbf{x},m)=D^+\phi(m-k(\textbf{x}))=\varphi(m-k(\textbf{x}))=-2(k(\textbf{x})-\mu).
$$
It follows that $K_1(x)=-2(k_1(x)-\mu), x \in \mathbb{R}$, where $k_1$ is the first associate function of $k$. In (A3) it was explained that $\zeta= \text{Var}(K_1(X_1))=4 \text{Var}(k_1(X_1))$, so that $\tau^2= \frac{1}{4}l^2\zeta=l^2 \text{Var}(k_1(X_1))$. Thus Example \ref{smooth1} yields:
$$
 \sqrt{n}(\tilde{m}_n-m)= \sqrt{n}(\binom{n}{l}^{-1} \sum_{1 \le i_1<\ldots<i_l \le n} k(X_{i_1},\ldots,X_{i_l})- \mu) \rightarrow N(0,l^2 \text{Var}(k_1(X_1)).
$$
This is in accordance with the Central Limit Theorem for (non-degenerate) $U$-statistcs.

If $l=1$, then $\tilde{m}_n$ reduces to the arithmetic mean $\overline{X}_n= \frac{1}{n} \sum_{i=1}^n k(X_i)$ and if in addition $S=\mathbb{R}$ and
$k$ is equal to the identity, then $\tilde{m}_n$ further simplifies to the arithmetic mean of the data $X_1,\ldots,X_n$.
\end{example}

\vspace{0.3cm}
It shouldbe mentioned that Example \ref{smooth1} does not give a new proof of the U-CLT, because the U-CLT is used in the proof of
Theorem \ref{thm1}, which is the mother of all our examples.\\

The ''generating'' function $\varphi(t)=t$ yields the arithmetic mean (for $l=1$), while $\varphi(t)=1_{\{t \ge 0\}}-\frac{1}{2}$ gives
the median. Sigmoid functions are a compromise between these two extremes.\\

\begin{example}(\textbf{sigmoid $\varphi$)} \label{sigmoid} Let $S=\mathbb{R}, l=1$ and $k(x)=x$, whence $F$ is the distribution function of
the (real-valued) data $X_1,\ldots,X_n$.
\begin{itemize}
\item[(1)] Let $\Phi$ be the distribution function of $N(0,1)$. Consider $\varphi=\Phi-\frac{1}{2}$ and $F(x)=\Phi(x-\mu)$ with
$\mu \in \mathbb{R}$. Then $m=\mu$ by Lemma \ref{mexists} and $\tilde{m}_n$ is the unique solution of the equation
$\sum_{i=1}^n \Phi(t-X_i)=\frac{n}{2}, t \in \mathbb{R}$. An application of Example \ref{smooth1} yields
$$
 \sqrt{n}(\tilde{m}_n-\mu) \stackrel{\mathcal{D}}{\rightarrow} N(0,\frac{\pi}{3}).
$$
\item[(2)] Let $C$ be the distribution function of the standard Cauchy-distribution. Assume that $\varphi=C-\frac{1}{2}$ and $F(x)=C(x-a)$ with
$a \in \mathbb{R}$. Similarly as in (1) above, $m=a$ and $\tilde{m}_n$ is the unique solution of the equation
$\sum_{i=1}^n C(t-X_i)=\frac{n}{2}, t \in \mathbb{R}$. Another application of Example \ref{smooth1} yields
$$
 \sqrt{n}(\tilde{m}_n-a) \stackrel{\mathcal{D}}{\rightarrow} N(0,\frac{\pi^2}{3}).
$$
\end{itemize}
\end{example}

\vspace{0.2cm}
Even though the class of estimators in Example \ref{smooth1} is already really big, it does not contain the $L_p$-estiamtor
at least for $p \neq 2$. However, our next result includes also this one.\\

\vspace{0.4cm}
\begin{example} \label{smooth2} Suppose that (\ref{A0}), (\ref{A2}) and (\ref{A4}) hold and that $\zeta>0$. Let $\varphi$ be continuous and strictly increasing, concave on $[0,\infty)$, convex
on $(-\infty,0)$ with $\varphi(0)=0$ and differentiable on $\mathbb{R} \setminus \{0\}$.
Then there exists a unique minimizer $m \in  \mathbb{R}$ of $U$.
If in addition
\begin{equation} \label{phiF1}
 \frac{1}{t}(\varphi(t)-\varphi(-t))R([m,m+t]) \rightarrow 0, \; t \downarrow 0,
\end{equation}
\begin{equation} \label{phiF2}
 \frac{1}{t}(\varphi(t)-\varphi(-t))R((m-t,m]) \rightarrow 0, \; t \downarrow 0,
\end{equation}
and $R$ is not discrete, then the integral $\int_{\mathbb{R} \setminus \{m\}} \varphi^\prime(m-x)R(dx)$ is a positive and finite real number and
$$
 \sqrt{n}(\tilde{m}_n -m) \stackrel{\mathcal{D}}{\rightarrow} N(0,\tau^2)\quad \text{ with } \tau^2=\frac{l^2 \zeta}{(\int_{\mathbb{R} \setminus \{m\}} \varphi^\prime(m-x)R(dx))^2} \in (0,\infty).
$$
If actually $\varphi^\prime > 0$ on $\mathbb{R} \setminus \{0\}$, then the non-discreteness assumption can be omitted.

In case that conversely $\varphi$ is convex on $[0,\infty)$ and concave on $(-\infty,0)$, then all statements remain valid.

As to (weak) sufficient conditions for the validity of (\ref{phiF1}) and (\ref{phiF2}) we refer to Remark \ref{phiF} below.
\end{example}

\begin{proof} The existence of an unique $m \in \mathbb{R}$ with property (A1) follows  exactly as in the above proof.
And also that $V$ is continuous and strictly increasing. By Proposition \ref{gsmooth} it remains to show that $V$ is differentiable at $m$ with a positive derivative. To see this observe that for every $t>0$,
\begin{eqnarray}
& &\frac{V(m+t)-V(m)}{t}=\int_\mathbb{R} \frac{\varphi(m+t-x)-\varphi(m-x)}{t} R(dx) \nonumber\\
&=&\int_{(-\infty,m)} \frac{\varphi(m+t-x)-\varphi(m-x)}{t} R(dx)+\int_{[m,m+t]} \frac{\varphi(m+t-x)-\varphi(m-x)}{t} R(dx)\nonumber\\
&+&\int_{(m+t,\infty)} \frac{\varphi(m+t-x)-\varphi(m-x)}{t} R(dx) =: I+II+III.
\end{eqnarray}
If $x \in (-\infty,m)$, then $0<m-x<m+t-x$, whence
$$
1_{(-\infty,m)}(x)\frac{\varphi(m+t-x)-\varphi(m-x)}{t} \uparrow 1_{(-\infty,m)}(x) \varphi^\prime(m-x), \; t \downarrow 0
$$
as $\varphi$ is concave
on $[0,\infty)$, confer Theorem 23.1 of Rockafellar \cite{Rockafellar}. Therefore $$I \uparrow \int_{(-\infty,m)} \varphi^\prime(m-x) R(dx), t \downarrow 0,$$ by the Monotone Convergence Theorem upon noticing that the functions on the left side are integrable for all $t$ according to (\ref{A0}). (Later on we will see that the integral on right side is finite.)
If $x \in [m,m+t]$, then the integrand of the second integral II is greater than or equal zero and less than or equal $(\varphi(t)-\varphi(-t))/t.$ Thus
$$
0 \le II \le \frac{1}{t}(\varphi(t)-\varphi(-t)) R([m,m+t]) \rightarrow 0, \; t \downarrow 0 \; \text{ by (\ref{phiF1})}.
$$
As to the third integral III notice that if $x > m+t$, then $0>m+t-x>m-x$. Recall that the difference quotients by convexity of $\varphi$ on $(-\infty,0)$ are increasing in the variable $t>0$, confer Theorem 23.1 of Rockafellar \cite{Rockafellar}. Consequently
\begin{eqnarray*}
 0 \le 1_{(m+t,\infty)}(x) \frac{\varphi(m+t-x)-\varphi(m-x)}{t} &\le& \frac{\varphi(m+t-x)-\varphi(m-x)}{t}\\
  &\le& \frac{\varphi(m+t^*-x)-\varphi(m-x)}{t^*} \quad \forall \; t \in (0,t^*],
\end{eqnarray*}
where $t^*$ is any fixed positive real number. Thus the Dominated Convergence Theorem yields that
$$
  III \rightarrow \int_{(m,\infty)} \varphi^\prime(m-x) R(dx) \in [0,\infty), \; t \downarrow 0.
$$
This shows that $V$ is right differentiable at $m$. For the derivation of left differentiability we start
with
\begin{eqnarray}
& &\frac{V(m-t)-V(m)}{-t}=\int_\mathbb{R} \frac{\varphi(m-t-x)-\varphi(m-x)}{-t} R(dx) \nonumber\\
&=&\int_{(-\infty,m-t]} \frac{\varphi(m-t-x)-\varphi(m-x)}{-t} R(dx)+\int_{(m-t,m]} \frac{\varphi(m-t-x)-\varphi(m-x)}{-t} R(dx)\nonumber\\
&+&\int_{(m,\infty)} \frac{\varphi(m-t-x)-\varphi(m-x)}{-t} R(dx) =: A+B+C.
\end{eqnarray}
Since
$$
0 \le 1_{(-\infty,m-t]}(x)\frac{\varphi(m-t-x)-\varphi(m-x)}{-t} \le \frac{\varphi(m-t^*-x)-\varphi(m-x)}{-t^*}  \quad \forall \; t \in (0,t^*],
$$
it follows by the Dominated Convergence Theorem that $$A \rightarrow \int_{(-\infty,m)} \varphi^\prime(m-x) R(dx),\; t \downarrow 0,$$  where the integral is a finite
real number as announced above. The summand $B$ can be treated as the summand $II$ above resulting with (\ref{phiF2}) in $B \rightarrow 0$. Finally, with the help of the Monotone Convergence Theorem one shows that $C \uparrow \int_{(m,\infty)} \varphi^\prime(m-x) R(dx)$, where it is already know that the integral here is finite. Summing up we arrive at $V^\prime(m)= \int_{\mathbb{R} \setminus \{m\}} \varphi^\prime(m-x) R(dx)$. That this quantity is in fact positive follows as in the proof of Proposition \ref{smooth1}. Here, in the definitions of the sets $A$ and $B$ one simply has to replace $\mathbb{R}$ by $\mathbb{R} \setminus \{m\}$.
If convex and concave reverse their roles, the derivation of $V^\prime(m)= \int_{\mathbb{R} \setminus \{m\}} \varphi^\prime(m-x) R(dx)$ follows analogously.
\end{proof}

\begin{remark} \label{zetapositiv} If in Examples \ref{smooth1} and \ref{smooth2} above the length $l=1$, then the condition $\zeta>0$ there is automatically fulfilled provided $R$ is not equal to the Dirac-measure at point $m$. To see this recall that $\zeta=\int \varphi(m-x)^2 R(dx)$ in case $l=1$.
Assume that $\zeta=0$. Then $1=R(\{x \in \mathbb{R}: \varphi(m-x)=0\})=R(\{m\})$, because $\varphi$ is strictly increasing and $\varphi(0)=0$.
It follows that $R= \delta_m$ in contradiction to our assumption on $R$.
\end{remark}

\vspace{0.4cm}
\begin{remark} \label{phiF}
(1) Assume that $F$ has left- and right derivatives at $m$. Then in particularly, $F$ is continuous at
$m$ and thus condition (\ref{phiF1}) is fulfilled, because
$$
  \frac{1}{t}(\varphi(t)-\varphi(-t))R([m,m+t])=(\varphi(t)-\varphi(-t))\frac{F(m+t)-F(m)}{t} \rightarrow 0, t\downarrow 0.
$$
Analogously, one sees that condition(\ref{phiF2}) is satisfied as well.

(2) If $\varphi$ in Example \ref{smooth2} is differentiable at zero with $\varphi^\prime(0)< \infty$ and if $F$ is continuous at $m$, then (\ref{phiF1}) and
(\ref{phiF2}) are again fulfilled. If actually $\varphi^\prime(0)=0$, then $F$ need not be continuous at $m$.
\end{remark}

\vspace{0.3cm}
Example \ref{smooth2} enables us to handle the $L_p$-estimator.\\

\begin{example} (\textbf{$L_p$-estimator}) For $p>1$ let $\phi(t)=|t|^p, t \in \mathbb{R}$. ($p=1$ corresponds to the median and will be examined later on.) Then $\phi$ is differentiable on $\mathbb{R}$ with continuous derivative $\varphi(t)=p \; \text{sign}(t) \; |t|^{p-1}$. Moreover, if $1<p <2$, then $\varphi$ is concave on $[0,\infty)$, convex on $(-\infty,0)$ and $\varphi(0)$. If $p \ge 2$, then concave and convex reverse their roles.
It follows with the $c_r$-inequality in Lemma \ref{crinequality} below that $$\int |\varphi(t-x)| R(dx) = p \int |t-x|^{p-1} R(dx) < \infty \; \forall \; t \in \mathbb{R},$$
if and only if $\int |x|^{p-1} R(dx) < \infty$.
Conclude from Lemma \ref{mexists} that $U$ has a unique minimizing point $m \in \mathbb{R}$ with $V(m)=0=V(m-)$, which satisfies the equation
$$
 \int_{(-\infty,m]} (m-x)^{p-1} R(dx)= \int_{(m,\infty)} (x-m)^{p-1} R(dx).
$$
Another application of the $c_r$-inequality yields that
\begin{equation} \label{73}
\int \varphi(t-x)^2 R(dx) = p^2 \int |t-x|^{2(p-1)} R(dx) \text{ is finite for all } t \in \mathbb{R}
\end{equation}
provided
$\int |x|^{2(p-1)} R(dx) < \infty$.
From (\ref{73}) it follows that (\ref{A2}) is fulfilled, but also (\ref{A4}) upon noticing that
$(\varphi(m+t-x)-\varphi(m-x))^2 \le 2(\varphi(m+t-x)^2+\varphi(m-x)^2)$.

Finally, assume that $F$ is left- and right-differentiable at $m$. Then by Remark \ref{phiF}(1) the conditions
(\ref{phiF1}) and (\ref{phiF2}) are fulfilled. In case $p>2$ these conditions are satisfied for any distribution function $F$  according to Remark \ref{phiF}(2). (The case $p=2$ is treated in Example \ref{smooth1}.)

In summary we obtain: If $\int |x|^{p-1} R(dx)$ is finite, then the minimizer $m$ of the function $t \mapsto \mathbb{E}[|t-k(\textbf{X})|^p]$  exists and is uniquely determined. Let $\tilde{m}_n$ be the $L_p$-estimator, i.e. $\tilde{m}_n$ solves the equation
$$ \sum_{\textbf{i}} 1_{\{k(\textbf{X}_{\textbf{i}})<t\}}(t-k(\textbf{X}_{\textbf{i}}))^{p-1}=\sum_{\textbf{i}} 1_{\{k(\textbf{X}_{\textbf{i}})>t\}}(k(\textbf{X}_{\textbf{i}})-t)^{p-1}, \; t \in \mathbb{R}.$$
If actually $\int |x|^{2(p-1)} R(dx) < \infty$, then
$$\sqrt{n}(\tilde{m}_n-m) \stackrel{\mathcal{D}}{\rightarrow} N(0,\tau^2) \text{ whith } \tau^2=\frac{l^2 \zeta}{p^2(p-1)^2\int_{\mathbb{R} \setminus \{m\}}|m-x|^{p-2} R(dx)}.$$
Notice that Example \ref{smooth2} in particularly guarantees the existence of the integral in the denominator.
In case $l=1$ the variance $\tau^2$ is equal to $$\frac{\int |m-x|^{2(p-1)}R(dx)}{(p-1)^2 \int_{\mathbb{R} \setminus \{m\}} |m-x|^{p-2} R(dx)}.$$
If in addition $S=\mathbb{R}$ and $k(x)=x$, then we obtain the result of Hjort and Pollard \cite{Hjort}.
\end{example}

\vspace{0.3cm}
In our next example $\varphi$ is not continuous but has a jump at point zero. Recall that $F$ is the distribution
function of $k(\textbf{X})$.\\

\begin{example} (\textbf{U-quantiles}) \label{exp2} For every fixed $\alpha \in (0,1)$ consider $\phi(t)= t (1_{\{t \ge 0\}}-\alpha).$
It follows that
$\varphi(t)= 1_{\{t \ge 0\}}-\alpha$ and therefore $$V(t)= F(t)-\alpha.$$ Thus (\ref{misminimizer}) yields that
all minimizers $m$ of $U$ are exactly those satisfing the inequalities $F(m-)\le \alpha \le F(m)$, i.e. Argmin$(U)$ is equal
to the set of all $\alpha$-quantiles of $F$. Let
$$
 F_n(x):=\binom{n}{l}^{-1} \sum_{1 \le i_1<\ldots<i_l \le n} 1_{\{k(X_{i_1},\ldots,X_{i_l})\le x\}}, \; x \in \mathbb{R}.
$$
Notice that $F_n$ is the empirical distribution function pertaining to the sample $\{k(X_{i_1},\ldots,X_{i_l}): 1 \le i_1<\ldots<i_l \le n\}$
of size $N=\binom{n}{l}$.
By (\ref{mnsatisfiesinequalities}) the induced estimator $\tilde{m}_n$ is an $\alpha$-quantile of $F_n$, i.e.
$F_n(\tilde{m}_n-)\le \alpha \le F_n(\tilde{m}_n)$. In general it is not unique. A conventional choice is
$F_n^{-1}(\alpha)$, the smallest $\alpha$-quantile. The condition (\ref{nscond}) becomes
\begin{equation} \label{deltanquantile}
 \delta_n(x)= \sqrt{n}(F(m+a_nx)-\alpha) \rightarrow  \delta(x) \quad \forall \; x \in D.
\end{equation}
It follows easily that (A0)-(A4) are fulfilled. Here, (A1) holds, if $F$ is continuous at $m$
and (A3) holds in case $l=1$, because then $\zeta=\alpha(1-\alpha)$. If $l \ge 2$, then
$\zeta=\int_S \mathbb{P}(k(x,X_2,\ldots,X_l) \le m)^2 P(dx)-\alpha^2$, whence $\zeta = 0$ leads to
$\mathbb{P}(k(x,X_2,\ldots,X_l) \le m)=\alpha$ for $P$-almost every $x \in S$, which usually results in a contradiction.
Assume that (\ref{deltanquantile}) is valid with $\delta(x) \rightarrow \pm \infty$ as $x \rightarrow \pm \infty$.
Then by Theorem \ref{thm1} we know that (\ref{dconv}) holds with $H$ being a distribution function, that
belongs to the four classes \textbf{class1}-\textbf{class4}. Infer from Theorems \ref{class1}-\ref{class3} that
$m$ is uniquely determined in case that $H$ lies in one of the first 3 classes. If $\delta(x)$ is bounded on $[0,\infty)$ or on $(-\infty,0]$,
then $H$ is a sub distribution function and we obtain distributional convergence to a $\overline{\mathbb{R}}$-valued random variable
with distribution function (for the extended real line) $H(x)=\Phi_\sigma(\delta(x)).$

In Example 4.3 of Ferger \cite{Ferger0} one finds distribution functions $F$ such that $a_n=n^{-\frac{1}{2 \beta}}$ with some $\beta>0$
and $H$ has the shape (\ref{H1}), (\ref{H2}) or (\ref{H3}). Another $F$ there yields $a_n=1/\log(n)$ and a limit $H$ as in (\ref{H4}).
As a further example consider
\begin{equation} \label{Fnewexp}
F(x):=\frac{1}{2}+\text{sign}(x) |x| (\log|x|)^2 \text{ for all } \; x \in [-\epsilon,\epsilon] \setminus \{0\}
\end{equation}
with $\epsilon > 0$ sufficiently small. Notice that $F$ is continuos, if $F(0):=\frac{1}{2}$.
Infer that $m=0$ is the unique median of $F$. It is not differentiable at $0$ and
$F^\prime(x) \rightarrow \infty$ as $t \rightarrow 0$. One easily checks with Theorem \ref{class3} that $F$ lies in \textbf{class3}. In fact, if $a_n:=n^{-\frac{1}{2}}(\log \sqrt{n})^{-2}$, then for all $x>0$ and for eventually all $n \in \mathbb{N}$ we obtain:
$$
\delta_n(x)= \sqrt{n}(F(a_n x)-\frac{1}{2})=x \big(\frac{-\log \sqrt{n}- 2 \log\log\sqrt{n}+\log(x)}{\log \sqrt{n}}\Big)^2 \rightarrow x
$$
Since $V(t)=-V(-t)$ for every $t \in [-\epsilon,\epsilon]$, it follows that $\delta_n(x) \rightarrow x$ for all $x \in \mathbb{R}$.
Consequently, $$\sqrt{n}(\log \sqrt{n})^{2} \tilde{m}_n \stackrel{\mathcal{D}}{\rightarrow} N(0,l^2 \zeta).$$
If $F$ is differentiable at $m$, then so is $V=F-\alpha$ and we obtain the square root asymptotics of Proposition \ref{gsmooth} and Remark \ref{onesided} with
$V^\prime(m)=F^\prime(m)$. In particularly,
\begin{equation}
 \sqrt{n}(\tilde{m}_n-m) \stackrel{\mathcal{D}}{\rightarrow} N(0,l^2 \zeta/F^\prime(m)^2),
\end{equation}
where $l=1$ yields the well-known limit $N(0,\alpha(1-\alpha)/F^\prime(m)^2)$.
\end{example}

\vspace{0.2cm}
The above Example \ref{exp2} in the special case $S=\mathbb{R}, l=1$ and $k(x)=x$ contains the whole theory on quantile-estimators of Smirnov's \cite{Smirnov} fundamental paper. It includes his equivalent conditions that the underlying distribution function $F$ of the data lies in the
repective domain of attraction (symbolically: $F \in \mathcal{D}(H))$. In fact, in Theorem \ref{class1}- \ref{class4} the function $V$ is given by $V(t)=F(t)-\alpha$.
In contrast to Smirnov \cite{Smirnov} we also specify the normalizing sequence $(a_n)$. For instance it follows from Theorems \ref{class1} and \ref{class3} that a possible choice is  $a_n=F^{-1}(\alpha+\frac{1}{\sqrt{n}})-F^{-1}(\alpha)$
for $F \in \mathcal{D}(H)$ with $H \in $ \textbf{class1} or \textbf{class3}.\\

Another special case in Example \ref{exp2} is $\alpha=\frac{1}{2}$ resulting in the median of $F$. Here, the corresponding
$\phi$ can be rewritten as $\phi(t)=\frac{1}{2}|t|$. The factor $\frac{1}{2}$ in front of $|t|$ is obviously unneceessary and therefore
will be ommitted in the sequel. So now we look at $h(\textbf{x},t)=|t-k(\textbf{x})|$ and additionally consider $S=\mathbb{R}.$\\

\begin{example}(\textbf{Hodges-Lehmann estimator}) Here, $k(x_1,\ldots,x_l):=l^{-1}(x_1+\ldots+x_l)$. For $l=2$ the
estimator $\tilde{m}_n$ is a median of the so-called \emph{Walsh-averages} $\{(X_i+X_j)/2: 1 \le i < j \le n\}$. This is a variant of the
Hodges-Lehmann estimator, where the Walsh-averages with $i=j$ are added to the sample.
\end{example}

\vspace{0.2cm}
If in the above example the function $k$ more generally is given by $k(x_1,x_2)= \beta x_1+(1-\beta) x_2$ with $\beta>0$ a fixed constant,
then the resulting median is a variant of the estimator due to \textbf{Maritz, Wu and Staudte} \cite{Maritz}.\\

\begin{example}(\textbf{Bickel-Lehmann estimator}) This estimator is defined as a median of the sample $\{|X_i-X_j|: 1 \le i < j \le n\}$
and corresponds to $k(x_1,x_2)=|x_1-x_2|.$
\end{example}

\vspace{0.2cm}
In our next example $S=\mathbb{R}^2$.\\

\begin{example}(\textbf{Theil-Sen estimator}) Let $X_i=(Y_i,Z_i), 1 \le i \le n,$ and $\{k(X_i,X_j):=\frac{Z_i-Z_j}{Y_i-Y_j}: 1 \le i<j \le n\}$, which
is a sample of well-defined real random variables, if for instance $Y_1$ is continuously distributed. Then the median of the sample is known to be a robust estimator for the slope $\beta$ in the simple linear regression model $Z_i=\alpha+\beta Y_i$.
\end{example}

\vspace{0.2cm}
The function $\varphi(t)= 1_{\{t \ge 0\}}-\alpha$, which induces the $\alpha$-quantile has a jump at point zero (and is continuous elsewhere).
Let us more generally consider monotone increasing functions $\varphi$, continuous on $\mathbb{R} \setminus \{0\}$ and with a jump at zero. Assume that $\kappa+:=\varphi(0) \ge 0$ and $\kappa_-:=\varphi(0-)\le 0$ and that $\kappa_*:=\min\{\kappa_+,-\kappa_-\} > 0$.
Then $\kappa:=\kappa_+-\kappa_- > 0$ is the jump height. Notice that $\varphi$ still satisfies the
assumption of Lemma \ref{mexists}, whence there exists a unique $m \in \mathbb{R}$ with $V(m)=0=V(m-)$ so that (A1) is fulfilled.
Moreover, $\int \varphi(m-x)^2 R(dx) \ge \kappa_*^2 >0$, so that (A3) is fulfilled at least in the important special case $l=1$.
Introduce
\begin{equation} \label{phic}
 \varphi_c := \varphi -\kappa_+ 1_{[0,\infty)}-\kappa_- 1_{(-\infty,0)}.
\end{equation}
Observe that $\varphi_c(0+)=\varphi(0)-\kappa_+=0$ and $\varphi_c(0-)=\varphi(0-)-\kappa_-=0$. Thus $\varphi_c$ is continuous at zero and $\varphi_c(0)=0$.
Geometrically, the graph of $\varphi_c$ is created from $\varphi$ by moving the two branches to the left and right of the ordinate to the origin and connecting them there. Notice that $\varphi$ meets the requirements (\ref{A0}) and (\ref{A2}) if and only if this is true for $\varphi_c$. Indeed, it is convenient to formulate our conditions in terms of $\varphi_c$ rather than for $\varphi$.
Moreover from (\ref{phic}) we obtain a decomposition of $V$ as follows:
\begin{equation} \label{Vc}
 V(t)= \kappa F(t)+ V_c(t) + \kappa_-,
\end{equation}
where $V_c(t)= \int \varphi_c(t-x) R(dx)$.
Thus $\delta_n$ has the following shape:
\begin{equation} \label{deltanj}
 \delta_n(x)= \kappa \sqrt{n}(F(m+a_n x)-F(m))+ \sqrt{n} (V_c(m+a_n x)-V_c(m)).
\end{equation}
Assume that
\begin{equation} \label{DomainF}
 \bar{\delta}_n(x) := \sqrt{n}(F(m+a_n x)-F(m)) \rightarrow \bar{\delta}(x) \quad \forall \; x \in D,
\end{equation}
where $D$ is a dense subset of $\mathbb{R}$ and $\bar{\delta}$ belongs to one of the four classes in (\ref{d1})-(\ref{d4}). In other words, $F \in \mathcal{D}(H)$. If $\varphi_c$ satifies the smoothness-conditions of the $\varphi$'s in Examples \ref{smooth1} or \ref{smooth2}, then our proofs show that $V_c$ is differentiable at $m$ with $V_c^\prime(m)=\int \varphi_c^\prime(m-x) R(dx) \ge 0$. Suppose that
$$
 \sqrt{n} a_n \rightarrow \rho.
$$
Then by (\ref{deltanj})
\begin{eqnarray}
 \delta_n(x) &=& \kappa \sqrt{n}(F(m+a_n x)-F(m))+ \sqrt{n} a_n \frac{V_c(m+a_n x)-V_c(m)}{a_n x} x \nonumber\\
             &\rightarrow& \kappa \bar{\delta}(x)+\rho V_c^\prime(m) x = \delta(x) \quad \text{ for all } x \in D. \label{deltarhopositiv}
\end{eqnarray}
Notice that $\rho \ge 0$.\\

\begin{example} \label{rhogleich0}
If $\rho=0$, then all statements of Theorem \ref{thm1} hold with $\delta=\kappa \bar{\delta}$.
\end{example}

\vspace{0.2cm}
If for instance $F$ is as in (\ref{Fnewexp}), then $\sqrt{n} a_n= (\log \sqrt{n})^{-2} \rightarrow 0$. Or if $a_n=n^{-\frac{1}{2 \beta}}$,
then $\rho=0$, whenever $\beta < 1$.

\vspace{0.2cm}
\begin{example} \label{rhopositive}
Consider $0 < \rho < \infty$.
Observe that for every $x>0$ we have:
$$
 \frac{F(m+a_n x)-F(m)}{a_n x}= \frac{\bar{\delta}_n(x)}{\kappa \sqrt{n} a_n x} \quad \forall \; n \in \mathbb{N}.
$$
Assume that $\bar{\delta}$ has the shape (\ref{d1}).
Taking the limit $n \rightarrow \infty$ shows that $F$ has a positive right derivative $D^+F(m)= \bar{\delta}(x)/(\kappa \rho x) >0$, because $\bar{\delta}(x)>0$. Rearranging the last equality leads to $\bar{\delta}(x)= \kappa \rho  D^+F(m) x$ for all $x>0$. Consequently, $\delta$ in (\ref{deltarhopositiv}) is given by
$\delta(x)= \rho (\kappa D^+F(m)+ V_c^\prime(m)) x$, if $x>0$ and equal to $-\infty$, if $x<0$. Thus with Theorem \ref{thm1} and Slutsky's Theorem
we arrive at
\begin{equation} \label{dconvdiscontphi}
 \sqrt{n}(\tilde{m}_n-m) \stackrel{\mathcal{D}}{\rightarrow} Y,
\end{equation}
where
$$
 \mathbb{P}(Y \le x) = \left\{ \begin{array}{l@{\quad,\quad}l}
                 0 & x<0\\ \Phi_\sigma\Big((\kappa D^+F(m)+ V_c^\prime(m)) x)\Big) & x > 0.
               \end{array} \right.
$$
Analogously, we can treat the two cases $\bar{\delta}$ of shape (\ref{d2}) or (\ref{d3}) and obtain the $\sqrt{n}$-distributional convergence (\ref{dconvdiscontphi}). Here in case (\ref{d2}),
$$
 \mathbb{P}(Y \le x) = \left\{ \begin{array}{l@{\quad,\quad}l}
                \Phi_\sigma\Big((\kappa D^-F(m)+ V_c^\prime(m)) x)\Big) & x<0\\ 1 & x > 0.
               \end{array} \right.
$$
and otherwise
$$
 \mathbb{P}(Y \le x) = \left\{ \begin{array}{l@{\quad,\quad}l}
                \Phi_\sigma\Big((\kappa D^-F(m)+ V_c^\prime(m)) x)\Big) & x<0\\ \Phi_\sigma\Big((\kappa D^+F(m)+ V_c^\prime(m)) x)\Big) & x > 0.
               \end{array} \right.
$$
Thus, if $F$ is differentiable at $m$, then
$$
 \sqrt{n}(\tilde{m}_n-m) \stackrel{\mathcal{D}}{\rightarrow} N(0,\tau^2) \quad \text{ with } \tau^2= \frac{l^2 \zeta}{(\kappa D^-F(m)+ V_c^\prime(m))^2}.
$$
Formally, for $\kappa=0$ we obtain the asymptotic normality in Examples \ref{smooth1} and \ref{smooth2}, because then $\varphi_c = \varphi$ and hence
$V_c^\prime(m)$ is equal to $\int \varphi^\prime(m-x)R(dx)$ or $\int_{\mathbb{R} \setminus \{m\}} \varphi^\prime(m-x)R(dx)$ according as $\varphi$ is differentiable at $0$ or not.

Finally, assume that $\bar{\delta}$ is of type (\ref{d4}). Then by (\ref{deltarhopositiv})
$$
\delta_n(x) \rightarrow \delta(x) = \left\{ \begin{array}{l@{\quad,\quad}l}
                 -\infty & x<-c_1\\ \rho V_c^\prime(m) x   & -c_1 < x < c_2\\ \infty & x > c_2.
               \end{array} \right. \quad (c_1,c_2 \ge 0, \max\{c_1,c_2\}>0)
$$
and by Theorem \ref{thm1} the limit $H(x)=\Phi_\sigma(\delta(x))$ is a distribution function not degenerated at $0$.
By the second part of Theorem \ref{thm1} $H$ can only belong to the four classes \textbf{class1}-\textbf{class4}.
This results in a contradiction when $V_c^\prime(m)$ is positive. However, if (and only if) $V_c^\prime(m)=0$, then we obtain
that
$$
 \frac{\tilde{m}_n-m}{a_n} \stackrel{\mathcal{D}}{\rightarrow} Y,
$$
where $Y$ is uniformly distributed on $\{-c_1,c_2\}$.
\end{example}

\vspace{0.2cm}
\begin{example} \label{rhoisinfinity} Assume that $\rho=\infty$. If $V_c^\prime(m)>0$, then it follows from (\ref{deltarhopositiv}) that
$\delta(x)= \infty$ and $\delta(x)=-\infty$ according as $x>0$ or $x<0$. Thus Theorem \ref{thm1} yields that
$(\tilde{m}_n-m)/a_n \stackrel{\mathbb{P}}{\rightarrow} 0$. However, if $V_c^\prime(m)=0$, then all statements of Theorem \ref{thm1} hold with $\delta=\kappa \bar{\delta}$.
\end{example}

\vspace{0.2cm}
\begin{example}
Finally, we would like to consider convex functions $\phi$, which are picewise linear. The corresponding ''generating'' function $\varphi$
is a step-function. For the sake of simplicity let us consider a three-step function $\varphi = \alpha 1_{(-\infty,0)}+ \beta 1_{[0,r)}+ \gamma 1_{[r,\infty)}$ with $r>0$ and $\alpha<0<\beta<\gamma$. We obtain $V(t)=\alpha+(\beta-\alpha) F(t)+(\gamma-\beta) F(t-r)$. By Lemma \ref{mexists}
there exists a unique $m \in \mathbb{R}$ such that $V(m)=0$. Therefore,
$$\delta_n(x)=\sqrt{n} V(m+a_n x)=(\beta-\alpha) \delta_{n,0}(x)+(\gamma-\beta) \delta_{n,r}(x),$$ where $\delta_{n,0}(x)= \sqrt{n}(F(m+a_n x)-F(m))$ and $\delta_{n,r}(x)= \sqrt{n}(F(m-r+a_n x)-F(m-r))$. If $\delta_{n,0}(x) \rightarrow \delta_0(x)$ and $\delta_{n,r}(x) \rightarrow \delta_r(x)$ for all $x \in D$, then $$\delta_n(x) \rightarrow \delta(x)= (\beta-\alpha) \delta_0(x)+(\gamma-\beta) \delta_r(x) \text{ for all } x \in D.$$

Suppose for instance that $\delta_0(x)$ or $\delta_r(x)$ converge to $\pm \infty$ as $x \rightarrow \pm \infty$, then $\delta(x) \rightarrow \pm \infty$ as $x \rightarrow \pm \infty$, so that $H(x)=\Phi_\sigma(\delta(x))$ is a distribution function. If $H$ is not degenerated at $0$, we obtain from Theorem \ref{thm1} distributional convergence with $H$ in one of the four classes.
\end{example}

\section{Appendix}
In this section, we derive results that may seem purely technical, but are extremely useful for our purposes.\\

\begin{lemma} \label{LA1} If (A1) and (A2) hold, then
\begin{equation} \label{2mom}
\mathbb{E}[D^+h(X_1,\ldots,X_l,m)^2]=\mathbb{E}[D^-h(X_1,\ldots,X_l,m)^2].
\end{equation}
If (A4) holds, then
\begin{equation} \label{incr}
 \mathbb{E}[\big(D^+h(X_1,\ldots,X_l,m+t)-D^+h(X_1,\ldots,X_l,m)\big)^2] < \infty \quad \forall \; t \in [t_0,t_1].
\end{equation}
\end{lemma}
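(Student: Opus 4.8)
The plan is to treat the two assertions separately, since \eqref{2mom} is an identity of second moments at the single point $m$, whereas \eqref{incr} is a domination (finiteness) statement on the whole interval $[t_0,t_1]$; the tools are different in the two cases.

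For \eqref{2mom} I would reuse a by-product that has already appeared inside the proof of Theorem~\ref{thm1}. There, in the analysis of $N_n(x)$ for $x<0$, it was shown that under (A1) the random variable $Z:=D^-h(\textbf{X},m)-D^+h(\textbf{X},m)$ satisfies $-Z\ge 0$ (by convexity $D^-h\le D^+h$ pointwise, Theorem~24.1 in Rockafellar \cite{Rockafellar}) together with $\mathbb{E}[-Z]=D^+U(m)-D^-U(m)=0$ by \eqref{Dpm} and (A1), whence $Z=0$ $\mathbb{P}$-a.s. Consequently $D^+h(\textbf{X},m)=D^-h(\textbf{X},m)$ almost surely, so their squares agree almost surely and hence have the same expectation. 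Under (A2) the expectation $\mathbb{E}[D^+h(\textbf{X},m)^2]$ is finite, and by the a.s. equality the two expectations are simultaneously finite and equal, which is exactly \eqref{2mom}.

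For \eqref{incr} I would argue purely by monotonicity. For each fixed $\textbf{x}$ the map $t\mapsto D^+h(\textbf{x},t)$ is increasing (again Theorem~24.1 in Rockafellar \cite{Rockafellar}). Fix $t\in[t_0,t_1]$ and split according to the sign of $t$. If $0\le t\le t_1$, then $D^+h(\textbf{x},m)\le D^+h(\textbf{x},m+t)\le D^+h(\textbf{x},m+t_1)$, so that
\[
0\le D^+h(\textbf{x},m+t)-D^+h(\textbf{x},m)\le D^+h(\textbf{x},m+t_1)-D^+h(\textbf{x},m);
\]
squaring (all terms non-negative) and integrating gives
\[
\mathbb{E}\big[(D^+h(\textbf{X},m+t)-D^+h(\textbf{X},m))^2\big]\le \mathbb{E}\big[(D^+h(\textbf{X},m+t_1)-D^+h(\textbf{X},m))^2\big]<\infty
\]
by (A4). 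Symmetrically, if $t_0\le t\le 0$, then $D^+h(\textbf{x},m+t_0)-D^+h(\textbf{x},m)\le D^+h(\textbf{x},m+t)-D^+h(\textbf{x},m)\le 0$, so the square is dominated by $(D^+h(\textbf{x},m+t_0)-D^+h(\textbf{x},m))^2$, whose expectation is finite by (A4). The two cases together exhaust $[t_0,t_1]$ and yield \eqref{incr}.

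There is no serious obstacle here. The only point requiring care in the first part is the recognition that the almost-sure coincidence of the one-sided derivatives at $m$ is precisely the relation $Z=0$ a.s. already produced in the proof of Theorem~\ref{thm1} (alternatively one re-derives it from the elementary fact that a non-negative integrable random variable with vanishing expectation is zero almost surely). In the second part the only subtlety is to split the interval at $0$, so that the monotone difference can be sandwiched between $0$ and the relevant endpoint value \emph{before} squaring; reversing the roles for negative $t$ is what makes the single endpoint $t_0$ of (A4) suffice for all of $[t_0,0]$.
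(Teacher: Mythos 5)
Your proposal is correct and follows essentially the same route as the paper's proof: for \eqref{2mom}, both arguments deduce $D^+h(\textbf{X},m)=D^-h(\textbf{X},m)$ $\mathbb{P}$-a.s.\ from the pointwise inequality $D^-h\le D^+h$ (Rockafellar, Theorem~24.1) combined with the vanishing of the mean difference via \eqref{Dpm} and (A1), and for \eqref{incr} both use monotonicity of $t\mapsto D^+h(\textbf{x},t)$ to sandwich the difference between $0$ and the endpoint value, splitting $[t_0,t_1]$ at $0$. Your explicit treatment of the negative side (dominating the square by the $t_0$-endpoint square) is exactly what the paper leaves to the reader with ``in the same fashion.''
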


\begin{proof} Let $\textbf{X}:=(X_1,\ldots,X_l)$. First notice that $D^+h(\textbf{X},m)$ and $D^-h(\textbf{X},m)$ both are integrable by (A2). Deduce from (A1) and (\ref{Dpm}) that $\mathbb{E}[D^+h(\textbf{X},m)-D^-h(\textbf{X},m)]=0$, whence \begin{equation} \label{Pas}
D^+h(\textbf{X},m)=D^-h(\textbf{X},m)\; \mathbb{P}-\text{a.s},
\end{equation}
because $D^+h(\textbf{x},\cdot) \ge D^-h(\textbf{x},\cdot)$ for all $\textbf{x} \in S^l$ by Theorem 24.1 in Rockafellar \cite{Rockafellar}. Consequently, $\mathbb{E}[D^+h(\textbf{X},m)^2]=\mathbb{E}[D^-h(\textbf{X},m)^2]$.

Let $t \in [0,t_1]$. By Theorem 24.1 in Rockafellar \cite{Rockafellar} $D^+h(\textbf{x},\cdot)$ is increasing for each $\textbf{x} \in S^l$ . It follows that
$$0 \le D^+h(\textbf{X},m+t)-D^+h(\textbf{X},m) \le D^+h(\textbf{X},m+t_1)-D^+h(\textbf{X},m)$$ and thus
$0 \le \big(D^+h(\textbf{X},m+t)-D^+h(\textbf{X},m)\big)^2 \le \big(D^+h(\textbf{X},m+t_1)-D^+h(\textbf{X},m)\big)^2$.
Conclude from (A4) that
$$
 \mathbb{E}[\big(D^+h(\textbf{X},m+t)-D^+h(\textbf{X},m)\big)^2] < \infty \quad \forall t \in [0,t_1]
$$
In the same fashion one treats the case $t \in [t_0,0]$.
\end{proof}

\begin{lemma} \label{LA2} If (A0)-(A4) hold, then so do (B0)-(B4). Moreover, $\zeta=\zeta^-$.
\end{lemma}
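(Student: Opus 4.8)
The plan is to verify the five B-conditions one at a time, obtaining most of them directly from the corresponding A-conditions together with Lemma \ref{LA1}, and to treat the equality $\zeta=\zeta^-$ as the genuine content of the lemma. Conditions (B0) and (B1) hold by definition, since they are declared equal to (A0) and (A1). Condition (B2) is immediate from the identity (\ref{2mom}) in Lemma \ref{LA1}, which gives $\mathbb{E}[D^-h(\textbf{X},m)^2]=\mathbb{E}[D^+h(\textbf{X},m)^2]<\infty$ under (A1) and (A2). Once $\zeta=\zeta^-$ is established, (B3) is automatic: the finiteness of $\zeta^-$ is already noted to follow from (B2), and its positivity follows from $\zeta^-=\zeta>0$, i.e. (A3).

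The heart of the proof is $\zeta=\zeta^-$. For $l=1$ this is exactly (\ref{2mom}), since there $\zeta=\mathbb{E}[D^+h(X_1,m)^2]$ and $\zeta^-=\mathbb{E}[D^-h(X_1,m)^2]$. For $l\ge 2$ I would pass from the almost-sure identity (\ref{Pas}), namely $D^+h(\textbf{X},m)=D^-h(\textbf{X},m)$ $\mathbb{P}$-a.s., to an almost-everywhere identity of the first associated functions by a Fubini argument. Writing $N:=\{\textbf{x}\in S^l: D^+h(\textbf{x},m)\neq D^-h(\textbf{x},m)\}$, we have $P^l(N)=0$, so Tonelli gives $P^{l-1}(N_{x_1})=0$ for $P$-a.e. $x_1$, where $N_{x_1}$ is the $x_1$-section. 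For such $x_1$ the two integrands agree $P^{l-1}$-a.s., hence the first associated functions $K_1^+(x_1):=\mathbb{E}[D^+h(x_1,X_2,\ldots,X_l,m)]$ and $K_1^-(x_1):=\mathbb{E}[D^-h(x_1,X_2,\ldots,X_l,m)]$ coincide for $P$-a.e. $x_1$; equality $P$-a.e. then forces equality of their squared integrals, which is precisely $\zeta=\zeta^-$. (Integrability of the inner integrands, needed for these conditional expectations to be finite a.e., is supplied by (A0), or equivalently by (A2) and (B2) via Fubini.)

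It remains to establish (B4), for which I would exploit the convexity sandwich $D^+h(\textbf{x},a)\le D^-h(\textbf{x},b)\le D^+h(\textbf{x},b)$, valid for $a<b$ by Theorem 24.1 in Rockafellar \cite{Rockafellar}. Fix any $s$ with $t_0<s<t_1$. Using $t_0<0<t_1$ and $t_0<s<t_1$, both $D^-h(\textbf{x},m)$ and $D^-h(\textbf{x},m+s)$ lie in the interval $[\,D^+h(\textbf{x},m+t_0),\,D^+h(\textbf{x},m+t_1)\,]$: the lower bounds use $m+t_0<m$ and $m+t_0<m+s$, the upper bounds use $D^-h\le D^+h$ together with $m\le m+t_1$ and $m+s\le m+t_1$. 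Consequently
\[
 |D^-h(\textbf{x},m+s)-D^-h(\textbf{x},m)|\le D^+h(\textbf{x},m+t_1)-D^+h(\textbf{x},m+t_0).
\]
The right-hand side is square-integrable, being the sum of $D^+h(\textbf{X},m+t_1)-D^+h(\textbf{X},m)$ and $D^+h(\textbf{X},m)-D^+h(\textbf{X},m+t_0)$, each of which belongs to $L^2(\mathbb{P})$ by (A4); hence so does their sum. Choosing $s_0\in(t_0,0)$ and $s_1\in(0,t_1)$ then yields (B4).

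The main obstacle I anticipate is the $l\ge 2$ case of $\zeta=\zeta^-$: one must argue carefully that the a.s. equality of the two one-sided derivatives on $S^l$ descends, through Fubini and the a.e. finite inner integrals, to an a.e. equality of the first associated functions on $S$. Everything else reduces to a routine application of Lemma \ref{LA1} and the elementary convexity inequalities.
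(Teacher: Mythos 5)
Your proposal is correct, and for most of the lemma it mirrors the paper's own proof: (B0)--(B2) are handled identically (via Lemma \ref{LA1}), and your Fubini argument for $\zeta=\zeta^-$ is essentially the paper's, which passes from $P^l(N)=0$ to $P$-a.e.\ null sections $N_{x_1}$ and then replaces $f^-=D^-h(\cdot,m)$ by $f^+=D^+h(\cdot,m)$ inside the inner integrals. Where you genuinely diverge is (B4). The paper argues measure-theoretically: since $J=\{t: D^-U(t)<D^+U(t)\}$ is countable, its complement is dense, so one can pick $u_0\in[m+t_0,m)$ and $u_1\in(m,m+t_1]$ at which $\mathbb{E}[D^+h(\textbf{X},t)-D^-h(\textbf{X},t)]=0$ forces $D^+h(\textbf{X},u_i)=D^-h(\textbf{X},u_i)$ a.s.; combined with (\ref{Pas}) and the extension (\ref{incr}) of Lemma \ref{LA1}, the (B4) expectations at $s_i=u_i-m$ then coincide with (A4)-type expectations. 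Your route instead uses the deterministic convexity sandwich $D^+h(\textbf{x},a)\le D^-h(\textbf{x},b)\le D^+h(\textbf{x},b)$ for $a<b$ to trap both $D^-h(\textbf{x},m)$ and $D^-h(\textbf{x},m+s)$ in the interval $[D^+h(\textbf{x},m+t_0),D^+h(\textbf{x},m+t_1)]$, whose length is in $L^2(\mathbb{P})$ by (A4) and the $c_r$-inequality. This is cleaner in two respects: it avoids any almost-sure identification of left and right derivatives (and hence does not need (\ref{incr}) or the countability of $J$), and it yields square-integrability of $D^-h(\textbf{X},m+s)-D^-h(\textbf{X},m)$ for \emph{every} $s\in(t_0,t_1)$ rather than only at two specially chosen points. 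What the paper's argument buys in exchange is the stronger structural fact that the $D^-$ increments actually agree a.s.\ with $D^+$ increments at the chosen points, a fact of independent use elsewhere in the paper; but for the statement of (B4) your pointwise bound is entirely sufficient.
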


\begin{proof} Assume that (A1)-(A4) are true. Obviously, (A0) and (A1) imply (B0) and (B1), respectively. Infer from (\ref{2mom}) of Lemma \ref{LA1} that (B2) is satisfied.

To see the validity of (B3) we first simplify our notation: $f^\pm(\textbf{x}):=D^\pm h(\textbf{x},m)$. A reformulation of (\ref{Pas}) yields: $P^l(N)=0$, where $N=\{\textbf{x} \in S^l: f^+(\textbf{x}) \neq f^-(\textbf{x})\}$.
An application of Fubini's theorem yields
that $0= \int_S P^{l-1}(N_{x_1}) P(dx_1)$ with $N_{x_1}=\{(x_2,\ldots,x_l) \in S^{l-1}: (x_1,x_2,\ldots,x_l) \in N\}$. As the integrand is non- negative, so that $P^{l-1}(N_{x_1})=0 \; P$-almost surely, i.e. $P(E)=1$, where $E=\{x_1 \in S: P^{l-1}(N_{x_1})=0\}=\{x_1 \in S: P^{l-1}(N_{x_1}^c)=1\}$. Here,
\begin{eqnarray*}
N_{x_1}^c&=&\{(x_2,\ldots,x_l) \in S^{l-1}: (x_1,x_2,\ldots,x_l) \notin N\}\\
&=&\{(x_2,\ldots,x_l) \in S^{l-1}: f^+(x_1,x_2,\ldots,x_l)=f^-(x_1,x_2,\ldots,x_l)\}.
\end{eqnarray*}
With these preparations we can conclude as follows:
\begin{eqnarray*}
\zeta^- &=& \int_S [\int_{S^{l-1}} f^-(x_1,\ldots,x_l) d P^{l-1}(x_2,\ldots,x_l)]^2 d P(x_1)\\
      &=& \int_E [\int_{N_{x_1}^c} f^-(x_1,\ldots,x_l) d P^{l-1}(x_2,\ldots,x_l)]^2 d P(x_1)\\
      &=& \int_E [\int_{N_{x_1}^c} f^+(x_1,\ldots,x_l) d P^{l-1}(x_2,\ldots,x_l)]^2 d P(x_1)\\
      &=& \int_S [\int_{S^{l-1}} f^+(x_1,\ldots,x_l) d P^{l-1}(x_2,\ldots,x_l)]^2 d P(x_1)\\
      &=& \zeta.
\end{eqnarray*}
Especially (B3) holds true.

As to (B4) recall that $\mathbb{E}[D^\pm h(\textbf{X},t)]=D^\pm U(t)$ for every $t \in \mathbb{R}$ by (\ref{Dpm}) and that
the function $U$ is convex. Thus by Theorem 24.1 of Rockafellar \cite{Rockafellar} $D^+U$ and $D^-U$ both are increasing. Moreover, $D^-U \le D^+U$ and the set $J :=\{t \in \mathbb{R}: D^-U(t)<D^+U(t)\}$ is denumerable, confer Niculescu and Persson \cite{Niculescu}, p. 20. Consequently, the complement $\Delta:=J^c=\{D^+U=D^-U\}$ is dense in $\mathbb{R}$ and $\mathbb{E}[D^+h(\textbf{X},t)-D^-h(\textbf{X},t)]=0$ for all $t \in \Delta$ , where the integrand is non-negative. Therefore
\begin{equation}
D^+h(\textbf{X},t)=D^-h(\textbf{X},t)\; \mathbb{P}\text{-a.s. for all } t \in \Delta.
\end{equation}
By denseness we find points $u_0 \in [m+t_0,m)$ and $u_1 \in (m,m+t_1]$ such that $D^+h(\textbf{X},u_0)=D^-h(\textbf{X},u_0)$ and
$D^+h(\textbf{X},u_1)=D^-h(\textbf{X},u_1)$ a.s. Put $s_0:=u_0-m$ and $s_1=u_1-m$. Taking into account (\ref{Pas}) we now have that
$$
D^+h(\textbf{X},m+s_0)=D^-h(\textbf{X},m+s_0),\; D^+h(\textbf{X},m+s_1)=D^-h(\textbf{X},m+s_1)
$$
and $D^+h(\textbf{X},m)=D^-h(\textbf{X},m)$ with probability one. Together with (\ref{incr}) this ensures (B4), because $t_0\le s_0<0<s_1\le t_1$.
\end{proof}

Recall that $\delta_n(x)=\sqrt{n} D^+U(m+a_n x)$ and $\delta_n^*(x)=\sqrt{n} D^-U(m+a_n x)$.\\

\begin{lemma} \label{LA3}
\hspace{0.1cm}
\begin{itemize}
\item[(1)] There exists a subset $D_1=A^c$, which is the complement of a countable set $A$ such that $\delta_n^*(x)=\delta_n(x)$ for all $x \in D_1$ and for all $n \in \mathbb{N}$.
\item[(2)]
Assume that
\begin{equation*}
 \delta_n(x) \rightarrow \delta(x)\in [-\infty,\infty] \quad \forall\; x \in D,
\end{equation*}
where $D$ lies dense in $\mathbb{R}$. Then there exists a dense subset $D_2$ with $D_2 \subseteq D_1$ such that
\begin{equation*}
 \delta_n^*(x) \rightarrow \delta(x)\in [-\infty,\infty] \quad \forall\; x \in D_2.
\end{equation*}
\end{itemize}
\end{lemma}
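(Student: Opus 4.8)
Recall that $\delta_n(x)=\sqrt{n}\,D^+U(m+a_nx)$ and $\delta_n^*(x)=\sqrt{n}\,D^-U(m+a_nx)$, and that $U$ is convex; the two quantities agree at precisely those points where $U$ is differentiable. For part (1) the plan is simply to collect the exceptional $x$'s. Since $U$ is convex, the set $J:=\{t\in\mathbb{R}: D^-U(t)<D^+U(t)\}$ of points of non-differentiability is at most countable (Niculescu and Persson, p.~20, as already used in Lemma \ref{LA2}). Now $\delta_n^*(x)=\delta_n(x)$ holds if and only if $m+a_nx\notin J$, and for fixed $n$ the offending $x$'s form the countable set $(J-m)/a_n:=\{(t-m)/a_n:t\in J\}$. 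Setting $A:=\bigcup_{n\in\mathbb{N}}(J-m)/a_n$, a countable union of countable sets and hence countable, and $D_1:=A^c$, we obtain that for every $x\in D_1$ and every $n\in\mathbb{N}$ one has $m+a_nx\notin J$, so $\delta_n^*(x)=\delta_n(x)$. This is exactly (1).

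For part (2) the key device is a monotonicity sandwich coming from convexity. Because $a_n>0$, for $x'<x<x''$ we have $m+a_nx'<m+a_nx<m+a_nx''$, and the standard convex-function inequality $D^+U(s)\le D^-U(t)$ for $s<t$ gives $\delta_n(x')\le\delta_n^*(x)$. Combining this with the pointwise bound $\delta_n^*(x)\le\delta_n(x)$ and the monotonicity of $\delta_n$, we get, for all $x'<x<x''$ and all $n\in\mathbb{N}$,
\[
 \delta_n(x')\le\delta_n^*(x)\le\delta_n(x)\le\delta_n(x'').
\]
Restricting $x',x''$ to the dense set $D$ (where $\delta_n\to\delta$) and letting $n\to\infty$ yields $\delta(x')\le\liminf_n\delta_n^*(x)\le\limsup_n\delta_n^*(x)\le\delta(x'')$ for all $x'<x<x''$ in $D$.

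Next I would extend $\delta$ from $D$ to an increasing function on $\mathbb{R}$ via the one-sided envelopes $\delta(x-):=\sup\{\delta(y):y\in D,\,y<x\}$ and $\delta(x+):=\inf\{\delta(y):y\in D,\,y>x\}$, which are well defined in $[-\infty,\infty]$ by the monotonicity of $\delta$ on $D$ (inherited from that of the $\delta_n$). Letting $x'\uparrow x$ and $x''\downarrow x$ through $D$ in the displayed bound gives $\delta(x-)\le\liminf_n\delta_n^*(x)\le\limsup_n\delta_n^*(x)\le\delta(x+)$. Let $C:=\{x:\delta(x-)=\delta(x+)\}$ be the continuity set of this increasing extension; its complement is countable, so $C$ is dense, and on $C$ the sandwich forces $\delta_n^*(x)\to\delta(x)$ (the common envelope value, which coincides with the given $\delta$ on $D\cap C$). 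Finally I set $D_2:=C\cap D_1$. Being the intersection of two co-countable sets it is itself co-countable, hence dense, and it satisfies $D_2\subseteq D_1$; the convergence $\delta_n^*(x)\to\delta(x)$ holds on $D_2$ because $D_2\subseteq C$. This proves (2).

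The only real subtlety lies in part (2): one cannot simply take $D_2=D\cap D_1$, because deleting the countable set $A$ from a possibly countable dense set $D$ need not leave a dense set (e.g.\ $D=\mathbb{Q}$, $A=\mathbb{Q}$). The monotonicity sandwich is what circumvents this, transferring the convergence of $\delta_n$ on $D$ to a \emph{direct} convergence of $\delta_n^*$ on the co-countable continuity set $C$ of the extended $\delta$; part (1) is then invoked only to intersect with $D_1$, which keeps $D_2$ inside $D_1$ while preserving density. Everything else is the routine verification that the envelope extension is monotone with at most countably many discontinuities.
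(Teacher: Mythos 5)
Your proof is correct, and part (2) takes a genuinely different route from the paper. Part (1) coincides with the paper's argument: countability of the non-differentiability set $J$ of the convex function $U$, pull-back through the affine maps $x \mapsto m+a_n x$, and a union over $n$. For part (2) the paper compactifies by composing with $\Phi_1$, i.e.\ it sets $F_n:=\Phi_1\circ\delta_n$, $F:=\Phi_1\circ\delta$, invokes Lemma 5.74 of Witting and M\"{u}ller-Funk to upgrade convergence of the increasing functions $F_n$ from the dense set $D$ to the continuity set $C_F$ of the (extended) limit, inverts $\Phi_1$ to get $\delta_n\rightarrow\delta$ on $C_F$, and only then uses part (1): on $D_2=D_1\cap C_F$ one has $\delta_n^*=\delta_n\rightarrow\delta$. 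You instead control $\delta_n^*$ directly via the two-sided bound $\delta_n(x')\le\delta_n^*(x)\le\delta_n(x)\le\delta_n(x'')$ for $x'<x<x''$, which follows from $D^+U(s)\le D^-U(t)$ for $s<t$ together with $D^-U\le D^+U$; squeezing through the monotone envelopes then gives $\delta_n^*(x)\rightarrow\delta(x)$ at every continuity point of the increasing extension of $\delta$. This buys two things: your argument is self-contained (the citation and the $\Phi_1$-compactification are replaced by an elementary sandwich, at the small price of checking, as you do, that an increasing $[-\infty,\infty]$-valued function has only countably many jumps, since its jump intervals are disjoint and each contains a rational), and it proves slightly more, namely convergence of $\delta_n^*$ on the whole co-countable set $C$; part (1) enters your proof only to enforce $D_2\subseteq D_1$ as the statement demands, whereas in the paper's proof part (1) carries the convergence itself. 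Your closing observation that $D_2=D\cap D_1$ would not suffice is also correct and explains why both proofs must pass through a co-countable continuity set rather than through $D$ itself.
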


\begin{proof} (1) Recall form the above proof that $J=\{t \in \mathbb{R}: D^-U(t) \neq D^+U(t)\}$ is denumerable.
Notice that $\delta_n(x) \neq \delta_n^*(x)$ if and only if $x \in \{(y-m)/a_n: y \in J\} =:A_n$. Here, $A_n$ is denumerable and
so is $A:=\cup_{n \in \mathbb{N}} A_n$. Thus $D_1:= A^c = \cap_{n \in \mathbb{N}} A_n^c$ is a dense subset of $\mathbb{R}$.
Moreover, if $x \in D_1$, then $\delta_n(x)=\delta_n^*(x)$ for every $n \in \mathbb{N}$.

(2) Conclude from the assumption that $F_n(x):=\Phi_1(\delta_n(x))\rightarrow \Phi_1(\delta(x))=:F(x)$ for all $x \in D$. All involved functions are increasing. Therefore by Lemma 5.74 in Witting and M\"{u}ller-Funk \cite{Witting} one has that $F_n(x) \rightarrow F(x)$ for all $x \in C_F$. Since $\Phi_1$ is invertible, we now know that $\delta_n(x) \rightarrow \delta(x)$ for all $x \in C_F$. By monotonicity of $F$ the complement $D_F$ of $C_F$ is countable and so is $A \cup D_F$, whence $D_2:=(A \cup D_F)^c=D_1 \cap C_F$ is dense.
Conclude with part (1) that $\delta_n^*(x) =\delta_n(x) \rightarrow \delta(x)$ for each $x \in D_2$.
\end{proof}

The following inequality is a well-known result from analysis, which we state for the sake of convenience.\\

\begin{lemma}(\textbf{$c_r$-inequality}) \label{crinequality} $|u+v|^r \le c_r(|u|^r+|v|^r)$ for all $u,v \in \mathbb{R}$ and for all $r>0$, where
$c_r=1$ or $c_r=2^{r-1}$ according as $r \le 1$ or $r>1$.

\end{lemma}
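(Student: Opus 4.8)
The plan is to strip the absolute values, reduce to a scalar inequality in two nonnegative variables, and then handle the ranges $0<r\le 1$ and $r>1$ by the two complementary properties of the power function $t\mapsto t^r$. First I would use the triangle inequality $|u+v|\le|u|+|v|$ together with the fact that $t\mapsto t^r$ is nondecreasing on $[0,\infty)$ for every $r>0$ to obtain $|u+v|^r\le(|u|+|v|)^r$. Setting $a:=|u|\ge 0$ and $b:=|v|\ge 0$, it then suffices to prove the scalar inequality $(a+b)^r\le c_r(a^r+b^r)$.

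In the regime $0<r\le 1$ (where $c_r=1$) the key fact is subadditivity of $t\mapsto t^r$. Assuming $a+b>0$ (the case $a=b=0$ being trivial) and putting $s:=a/(a+b)\in[0,1]$, the elementary bound $\sigma^r\ge\sigma$ for $\sigma\in[0,1]$ and $r\le 1$ gives $s^r+(1-s)^r\ge s+(1-s)=1$; multiplying by $(a+b)^r$ and simplifying the left-hand side to $a^r+b^r$ yields $a^r+b^r\ge(a+b)^r$. In the regime $r>1$ (where $c_r=2^{r-1}$) the relevant property is convexity of $t\mapsto t^r$ on $[0,\infty)$: evaluating Jensen's inequality at the midpoint gives $\big((a+b)/2\big)^r\le(a^r+b^r)/2$, and multiplying through by $2^r$ produces $(a+b)^r\le 2^{r-1}(a^r+b^r)$. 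Combining the two cases with the reduction of the first paragraph finishes the argument.

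The proof is entirely elementary, so there is no genuine obstacle; the only point deserving attention is that the two regimes rely on opposite features of the power map—subadditivity when the exponent contracts and the convexity estimate when it expands—and that the constant $2^{r-1}$ is exactly the value forced by the midpoint choice $a=b$, which confirms it cannot be improved.
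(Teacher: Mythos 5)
Your proof is correct and complete: the reduction via the triangle inequality and monotonicity of $t\mapsto t^r$, the subadditivity argument (normalizing by $a+b$) for $0<r\le 1$, and the midpoint convexity estimate yielding $2^{r-1}$ for $r>1$ are all sound, and your remark that $a=b$ forces the constant is accurate. Note that the paper itself states this lemma without proof, citing it as a well-known result from analysis, so there is no paper argument to compare against; your proof is the standard one and fills that gap correctly.
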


\vspace{1cm}
\textbf{Declarations}\\

\textbf{Compliance with Ethical Standards}: I have read and I understand the provided information.\\

\textbf{Competing Interests}: The author has no competing interests to declare that are relevant to the content of
this article.


\end{document}